\UseRawInputEncoding\documentclass[12pt, reqno]{amsart}
\usepackage[margin=1in]{geometry}
\usepackage{amssymb,latexsym,amsmath,amscd,amsfonts}
\usepackage{latexsym}
\usepackage[mathscr]{eucal}
\usepackage{bm}
\usepackage{tabularx}
\usepackage{mathptmx}
\usepackage{amssymb}
\usepackage{amsthm}
\usepackage{dcolumn}
\usepackage[all]{xy}
\usepackage{enumitem}
\usepackage[utf8]{inputenc}

\def \qed {\hfill \vrule height6pt width 6pt depth 0pt}
\def\textmatrix#1&#2\\#3&#4\\{\bigl({#1 \atop #3}\ {#2 \atop #4}\bigr)}
\def\dispmatrix#1&#2\\#3&#4\\{\left({#1 \atop #3}\ {#2 \atop #4}\right)}
\newcommand{\beg}{\begin{equation}}
	\newcommand{\eeg}{\end{equation}}
\newcommand{\ben}{\begin{eqnarray*}}
	\newcommand{\een}{\end{eqnarray*}}

\newcommand{\C}{\mathbb C}
\newcommand{\R}{\mathbb R}
\newcommand{\N}{\mathbb N}
\newcommand{\T}{\mathbb T}

\newcommand{\pr}{\partial}
\newcommand{\ol}{\overline}

\newcommand{\LL}{\mathbb L}
\newcommand{\F}{\mathbb F}
\newcommand{\E}{\mathbb E}
\newcommand{\He}{\mathbb H}
\newcommand{\Pe}{\mathbb P}
\newcommand{\D}{\mathbb D}
\newcommand{\G}{\mathbb G}
\newcommand{\QN}{\mathbb H_N}
\newcommand{\Gg}{\mathbb G_2}
\newcommand{\Ebar}{\overline{\mathbb E}}
\newcommand{\Pbar}{\overline{\mathbb P}}

\newcommand{\lm}{\lambda}
\newcommand{\diag}{\text{diag}}

\newcommand{\hexa}{\text{hexa}}

\newtheorem{thm}{Theorem}[section]
\newtheorem{cor}[thm]{Corollary}
\newtheorem{lem}[thm]{Lemma}

\newtheorem{prop}[thm]{Proposition}
\numberwithin{equation}{section} \theoremstyle{definition}

\newtheorem{eg}[thm]{Example}

\begin{document}
	\title{A domain in $\mathbb C^4$ and its connection with $\mu$-synthesis problem}

\author{SOURAV PAL AND NITIN TOMAR}

\address[Sourav Pal]{Mathematics Department, Indian Institute of Technology Bombay,
	Powai, Mumbai - 400076, India.} \email{sourav@math.iitb.ac.in}

\address[Nitin Tomar]{Mathematics Department, Indian Institute of Technology Bombay, Powai, Mumbai-400076, India.} \email{tomarnitin414@gmail.com}

\keywords{The domain $\mathbb{F}$, hexablock, $\mu$-synthesis problem, distinguished boundary}	

\subjclass[2020]{47A13, 47A20, 47A25, 47A45}	

	\begin{abstract}

		The hexablock $\mathbb{H}$, introduced by Biswas-Pal-Tomar \cite{Hexablock}, is a Hartogs domain in $\mathbb{C}^4$ fibered over the tetrablock $\mathbb{E}$ in $\mathbb{C}^3$ defined as
			\[
		\mathbb H= \left\{(a, x_1, x_2, x_3) \in \mathbb{C} \times \mathbb{E} :  \sup_{z_1, z_2 \in \mathbb{D}}\left|\frac{a\sqrt{(1-|z_1|^2)(1-|z_2|^2)}}{1-x_1z_1-x_2z_2+x_3z_1z_2}\right|<1 \right\},
		\]
where $\mathbb{D}=\{z \in \mathbb{C}: |z|<1\}$. The domain hexablock arises in the context of $\mu$-synthesis problem. While the symmetrized bidisc and the tetrablock are well-known domains associated with the $\mu$-synthesis problem, they can also be realized as $2$-proper holomorphic images of classical Cartan domains. Motivated by this, Ghosh-Zwonek \cite{GhoshI} introduced a family of domains $\{\mathbb{L}_n: n \geq 2\}$ and study its function theoretic and geometric properties. It was further proved that $\mathbb{L}_2$ and $\mathbb{L}_3$ are biholomorphic to the symmetrized bidisc $\mathbb{G}_2$ and the tetrablock $\mathbb{E}$, respectively. Moreover, the domain $\mathbb{L}_4$ is biholomorphic to the domain given by
		\[
		\mathbb{F}=\left\{(a_{11}, a_{22}, \det(A), a_{21}+a_{12}) \in \mathbb{C}^4 : A=(a_{ij})_{i,j=1}^2 \in M_2(\mathbb{C}), \|A\| < 1 \right\}.
		\]
Both $\mathbb{F}$ and $\mathbb{H}$ are domains in $\mathbb{C}^4$ closely related to the tetrablock $\mathbb{E}$ in the sense that if $(x, a, p, s) \in \mathbb{F}$, then $(x, a, p) \in \mathbb{E}$, and if $(a, x_1, x_2, x_3) \in \mathbb{H}$, then $(x_1, x_2, x_3) \in \mathbb{E}$. Moreover, $(x, a, p, 0) \in \F$ if and only if $(0, x, a, p) \in \He$. Also, $(a_{21}, a_{11}, a_{22}, \det(A)) \in \mathbb{H}$ for $A=(a_{ij})_{i,j=1}^2 \in M_2(\mathbb{C})$ with $\|A\|<1$. This structural similarities between $\mathbb{F}$ and $\mathbb{H}$ raise the natural question if these two domains are biholomorphic. In this paper, we answer this question in negative. Moreover, we provide alternative characterizations of the domain $\mathbb{F}$ and explore its connection with the domains associated with $\mu$-synthesis problem such as the symmetrized bidisc, the tetrablock, the pentablock and the hexablock. We also address the following question posed by Ghosh-Zwonek in \cite{GhoshI}: does $\mathbb{F}$ arise from a $\mu$-synthesis problem in the same manner as $\mathbb{G}_2$ and $\mathbb{E}$ ?
		
	\end{abstract}
	\maketitle
	
		\section{Introduction}\label{sec_01}
	
	\noindent Throughout the paper, the sets of complex numbers, real numbers, rational numbers, integers and natural numbers are denoted by $\mathbb{C}, \mathbb{R}, \mathbb{Q}, \mathbb{Z}$ and $\mathbb{N}$, respectively. The symbols $\mathbb{D}, \mathbb{T}$ stand for the open unit disc and the unit circle in the complex plane with their centres at the origin, respectively. We write $M_n(\mathbb{C})$ for the space of all $n \times n$ complex matrices. The transpose, adjoint, trace, determinant, spectral radius and operator norm of a matrix $A \in M_n(\mathbb{C})$ are denoted by $A^t, A^*, \text{tr}(A)$, $\det(A)$, $r(A)$ and $\|A\|$, respectively.	
	
	\smallskip
	
In control engineering (see \cite{Doyle, DoyleII}), the structured singular value of an $n \times n$ matrix $A$ is a cost function that generalizes the classical operator norm of $A$, incorporating structural information about the perturbations under consideration. It is computed relative to a specified linear subspace $E$ of $M_n(\mathbb{C})$, which is commonly referred to as the structure. Given such a subspace $E \subseteq M_n(\mathbb{C})$, the structured singular value is defined by
	\begin{equation} \label{eqn:NEW-01}
		\mu_E(A)=\frac{1}{\inf\{\|X\| \ : \ X \in E, \ \det(I-AX)=0 \}}   \quad (A \in M_n(\mathbb{C}))
	\end{equation} 
	with the understanding that $\mu_E(A)=0$ if there does not exist $X \in E$ satisfying $\det(I-AX)=0$. Efforts to solve different cases of the $\mu$-synthesis problem give rise to various domains in $\C^2$ and $\C^3$ such as symmetrized bidisc \cite{AglerIII}, tetrablock \cite{Abouhajar} and pentablock \cite{AglerIV}. These domains are denoted by $\G_2, \mathbb{E}$ and $\mathbb{P}$ respectively and are defined as follows:
	\begin{align*}
		& \mathbb{G}_2 
		=\{(z_1+z_2, z_1z_2) \in \C^2 : |z_1| <1, |z_2|<1\}, \\
		& \E   =\{(x_1, x_2, x_3) \in \C^3 \ : \ 1-x_1z_1-x_2z_2+x_3z_1z_2 \ne 0 \ \ \text{for} \ \ |z_1| \leq 1, |z_2| \leq 1 \}, \\
		& \mathbb{P}  =\{(a_{21}, \text{tr}(A), \det(A)) \in \C^3 : A=(a_{ij}) \in M_2(\C), \|A\| <1\}. 
	\end{align*}
	The domains $\G_2, \E$ and $\Pe$ have a rich literature on geometry and function theory independent of their connections with $\mu$-synthesis and control engineering. We refer to \cite{AglerYoung}-\cite{Agler2008} and \cite{Tirtha_Pal, Costara2004, Costara2005, NikolovIII, sourav14, pal-shalit, PZ} for a further reading on $\G_2$. Similarly, a reader can see \cite{Abouhajar, Alsalhi, Tirtha, Kosi, zwonek1, Young, Zwonek} and \cite{AglerIV, Alsheri, Jindal, KosinskiII, GuicongII, Guicong} respectively, to follow some interesting works on the tetrablock and the pentablock. Besides their rich geometry and function theory, these domains have also attracted attention in operator theory. In this direction, a reader is referred to the works \cite{Agler2003, AglerYoung2000, Tirtha_Pal, tirtha-sourav1} for the symmetrized bidisc, \cite{Tirtha, Ball_Sau, Pal_E3} for the tetrablock and \cite{JindalII, PalN} for the pentablock. Also, see the references therein.

\smallskip

Recently Biswas-Pal-Tomar  \cite{Hexablock} introduced another domain in $\C^4$ in connection with the $\mu$-synthesis problem. Let $E$ be the linear subspace of all $2\times 2$ upper triangular complex matrices and let us denote the corresponding cost function $\mu_E$ in this case by $\mu_{\text{hexa}}$. Consider the mapping
	\[
	\pi: M_2(\mathbb{C}) \to \mathbb{C}^4, \quad A = \begin{pmatrix} a_{11} & a_{12} \\ a_{21} & a_{22} \end{pmatrix} \mapsto (a_{21}, a_{11}, a_{22}, \det(A)).
	\]
	The set  $\mathbb{H}_\mu = \left\{ \pi(A) : A \in M_2(\mathbb{C}),  \mu_{\text{hexa}}(A) < 1 \right\}$ is refereed to as the $\mu$-hexablock in \cite{Hexablock}. However, $\mathbb{H}_\mu$ is connected but not open in $\mathbb{C}^4$, and therefore not a domain in $\mathbb{C}^4$. 	The authors of \cite{Hexablock} introduced a new family of fractional linear maps given by
	\begin{equation}\label{linear}
		\Psi_{z_1,z_2}(a, x_1, x_2, x_3) := \frac{a \sqrt{(1 - |z_1|^2)(1 - |z_2|^2)}}{1 - x_1 z_1 - x_2 z_2 + x_3 z_1 z_2}, \quad ((z_1, z_2) \in \ol{\D}^2, (x_1, x_2, x_3) \in \E).
	\end{equation}
		Note that $1 - x_1 z_1 - x_2 z_2 + x_3 z_1 z_2 \neq 0$ for any $z_1, z_2 \in \ol{\D}$ since $(x_1 x_2, x_3) \in \E$. Define  
	\begin{align}\label{eqn_He}
	\mathbb{H} := \left\{ (a, x_1, x_2, x_3) \in \mathbb{C} \times \mathbb{E} : \sup_{z_1,z_2 \in \mathbb{D}} \left| \Psi_{z_1,z_2}(a, x_1, x_2, x_3) \right| < 1 \right\}.
	\end{align}
The set $\mathbb{H}$, introduced by Biswas–Pal–Tomar \cite{Hexablock}, is called the hexablock due to the geometric structure of its real slice $\mathbb{H} \cap \mathbb{R}^4$, which is a bounded set whose boundary comprises exactly four extreme sets and two hypersurfaces.	The hexablock $\mathbb{H}$ is a domain in $\mathbb{C}^4$ and $\mathbb{H} = \operatorname{int}(\overline{\mathbb{H}}_\mu)$. Its connections with the symmetrized bidisc $\G_2$, the tetrablock $\E$ and the pentablock $\Pe$ have been explored in \cite{Hexablock}. In fact, it was proved in \cite{Hexablock} that all these three domains are holomorphic retracts of the hexablock. Furthermore, $\mathbb{G}_2$ and $\E$ can be realized as the images of $2$-proper holomorphic images of classical Cartan domains of type $IV$ in $\C^2$ and $\C^3$ respectively. To this end, Ghosh-Zwonek \cite{GhoshI} introduced an interesting family of domains $\{\mathbb{L}_n: n \geq 2\}$ and studied their geometric aspects. To see this, let $L_n$ be the classical Cartan domain of type $IV$ in $\C^n$ defined as 
\[
L_n=\left\{z \in \mathbb{B}_n : \sqrt{\left(\overset{n}{\underset{j=1}{\sum}}|z_j|^2\right)^2-\left|\overset{n}{\underset{j=1}{\sum}}z_j^2\right|^2}<1-\overset{n}{\underset{j=1}{\sum}}|z_j|^2
 \right\},
\]
where $\mathbb{B}_n=\{(z_1, \dotsc, z_n) \in \C^n: \overset{n}{\underset{j=1}{\sum}}|z_j|^2<1\}$. The map
$\Lambda_n: L_n \to \C^n$ given by $\Lambda_n(z_1, z_2, \dotsc, z_n)=(z_1^2, z_2, \dotsc, z_n)$ is a proper holomorphic mapping with multiplicity of $2$. The domain $\mathbb{L}_n$ is the image of $L_n$ under the map $\Lambda_n$, i.e., $\Lambda_n(L_n)=\mathbb{L}_n$ for $n \geq 2$.  The domains $\mathbb{L}_2$ and $\mathbb{L}_3$ are biholomorphic to $\G_2$ and $\E$, respectively (see Corollary 3.9 in \cite{GhoshI} for explicit formulae of the biholomorphisms). We consider the domain $\mathbb{F}$, introduced in \cite{GhoshI}, and defined as
\[
\F=\left\{(a_{11}, a_{22}, \det(A), a_{12}+a_{21}) \in \C^4: A=\begin{pmatrix}
	a_{11} & a_{12}\\
	a_{21} & a_{22}
\end{pmatrix} \in M_2(\C) \ \text{with} \ \|A\|<1\right\}.
\]
The domain $\F$ is a bounded $(1, 1, 2, 1)$-quasi-balanced domain. Recall that a domain $\Omega\subset\mathbb{C}^n$ is called $(m_1,\cdots,m_n)$-circular (or quasi-circular), where $m_1, \dotsc, m_n$ are relatively prime numbers, if
\[
\begin{pmatrix}\lambda^{m_1}z_1,\cdots,\lambda^{m_n}z_n\end{pmatrix}\in \Omega \qquad \text{for all $\lambda\in\mathbb{T}$ and $z=(z_1,\cdots,z_n)\in \Omega$}.
\]
Also, $\Omega$ is said to be $(m_1,\cdots,m_n)$-\textit{balanced} (or simply \textit{quasi-balanced})If the above relation holds for all $\lambda\in \overline{\mathbb{D}}$. It follows from Corollary 3.9 in \cite{GhoshI} that the domains $\mathbb{L}_4$ and $\F$ are biholomorphic. As mentioned earlier, $\He$ has close connections with $\G_2, \E$ and $\Pe$. Motivated by this, we establish in Section \ref{sec_002} that the domain $\mathbb{F}$ is also closely related to the domains $\mathbb{G}_2$, $\mathbb{E}$ and $\mathbb{P}$. In particular, it is proved that if $(x, a, p, s) \in \F$, then $(s, ax-p) \in \G_2, (x, a, p) \in \E$ and $(a, s, -p) \in \Pe$. Furthermore, it is shown that $\G_2=\{(s, -p): (0, 0, p, s) \in \F\}$ and $\E=\{(x_1, x_2, x_3): (0, x_1, x_2, x_3) \in \F\}$. For $A=(a_{ij})_{i, j}^2 \in M_2(\C)$ with $\|A\|<1$, we have
\[
(a_{11}, a_{22}, \det(A), a_{12}+a_{21}) \in \F \quad \text{and} \quad (a_{21}, a_{11}, a_{22}, \det(A)) \in \He,
\]
where the last statement is proved in Chapter $6$ in \cite{Hexablock}. These similarities between $\F$ and $\He$, together with their respective connections to $\G_2, \E$ and $\Pe$ naturally lead to the following question:

\smallskip 

\noindent \textbf{Q1.} Are the domain $\F$ and $\mathbb H$ biholomorphic ? 

\smallskip

We address this question in Section \ref{sec_003} by studying their respective distinguished boundaries. We mention here that descriptions of the distinguished boundary $\partial_s\F$ of the domain $\F$ and that of the hexablock can be found in \cite{Zwonek, Hexablock}. For our purpose, we obtain additional structural characterizations of $\partial_s \F$ in Theorem \ref{cor_bF}. Capitalizing this, we prove in Theorem \ref{thm_305} that the domains $\F$ and $\He$ are not biholomorphic. The key step in the proof is to show that any biholomorphism between $\F$ and $\He$ extends to a homeomorphism between their closures and, in particular, induces a homeomorphism between their distinguished boundaries, which leads to a contradiction.

\smallskip 

Based on the fact that $\mathbb{L}_2$ and $\mathbb{L}_3$ are biholomorphic to $\G_2$ and $\E$ respectively, Ghosh-Zwonek \cite{GhoshI} raised the following question:

\smallskip 

\noindent \textbf{Q2.} Does the family of domains $\mathbb{L}_n$ arise from a special case of $\mu$-synthesis like the symmetrized polydisc $\mathbb G_n$ and tetrablock $\mathbb E$ ?

\smallskip

In Section~\ref{sec_004}, we address this question for $\mathbb F$, which is biholomorphic to $\mathbb L_4$ and provide an affirmative answer by showing that $\mathbb F$ arises from a suitable $\mu$-synthesis construction. In particular, we show that 
\[
\F=\{(a_{11}, a_{22}, \det(A), a_{12}+a_{21}) : A=(a_{ij}) \in M_2(\C), \mu_E(A)<1\}, 
\]
where $E \subset M_2(\C)$ is the linear subspace given by $E=\left\{\begin{pmatrix}
	z_1 & w \\
	w & z_2
\end{pmatrix} : z_1, z_2, w \in \C \right\}$. Interestingly, it turns out that the choice of $E$ is not unique. As a natural next step, we therefore characterize all such linear subspaces. We thank Professor Włodzimierz Zwonek profusely for drawing our attention to the problems that are investigated in this paper.

\section{The domain $\mathbb{F}$ and its closure}\label{sec_002}

\noindent In this section, we study a domain in $\C^4$ given by
\[
\F=\{\pi_{\F}(B): B \in M_2(\C), \|B\| \leq 1 \},
\] 
where $\pi_{\F}: M_2(\C) \to \C^4, \ \pi_{\F}(B)=(b_{11}, b_{22}, \det(B), b_{12}+b_{21})$ for $B=(b_{ij})_{i, j=1}^2 \in  M_2(\C)$. The authors of \cite{GhoshI} proved that $\F$ is a bounded $(1, 1, 2, 1)$-quasi-balanced-domain. Here, we discuss an alternative characterization of $\F$ and its connection with domains associated with $\mu$-synthesis such as the symmetrized bidisc $\mathbb{G}_2$, the tetrablock $\E$ and the pentablock $\Pe$. To do so, we need the following lemma. 

\begin{lem}\label{lem_201}
	Let $B=\begin{pmatrix}
		x & b_{12}\\
		b_{21} & a
	\end{pmatrix} \in M_2(\C)$ and let $(b_{12}+b_{21}, b_{12}b_{21})=(s, ax-p)$ for $s, p \in \C$. Then 
	\[
	\det(I-B^*B)=1-|a|^2-|x|^2+|p|^2-\frac{|s|^2}{2}-\frac{|s^2-4(ax-p)|}{2}
	\]
and	the following hold.
	\begin{enumerate}
		\item If $|p|<1$, then $\|B\| <1$ if and only if  $\det(I-B^*B)>0$;
		\item If $|p| \leq 1$, then $\|B\| \leq 1$ if and only if  $\displaystyle \det(I-B^*B)\geq 0$.
	\end{enumerate}
\end{lem}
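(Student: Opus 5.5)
The plan has two parts: first verify the determinant formula by direct expansion, then deduce (1) and (2) from the fact that the singular values of $B$ are controlled by $|\det B|$ and $\det(I-B^*B)$.

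\textbf{The formula.} For any $2\times 2$ matrix,
\[
\det(I-B^*B)=1-\operatorname{tr}(B^*B)+\det(B^*B)=1-\|B\|_F^2+|\det B|^2 .
\]
Here $\|B\|_F^2=|x|^2+|a|^2+|b_{12}|^2+|b_{21}|^2$ and $\det B=ax-b_{12}b_{21}=ax-(ax-p)=p$. It therefore remains to show
\[
|b_{12}|^2+|b_{21}|^2=\frac{|s|^2}{2}+\frac{|s^2-4(ax-p)|}{2}.
\]
Since $b_{12},b_{21}$ are the roots of $z^2-sz+(ax-p)$, we have $s^2-4(ax-p)=(b_{12}-b_{21})^2$. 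The parallelogram law then gives
\[
\frac{|b_{12}+b_{21}|^2+|b_{12}-b_{21}|^2}{2}=|b_{12}|^2+|b_{21}|^2,
\]
which proves the formula.

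\textbf{Parts (1) and (2).} Let $\sigma_1\ge\sigma_2\ge 0$ be the singular values of $B$. Then
\[
\det(I-B^*B)=(1-\sigma_1^2)(1-\sigma_2^2),\qquad \sigma_1\sigma_2=|\det B|=|p|.
\]

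For (1), the forward direction is immediate: if $\|B\|<1$ then both factors are positive. Conversely, suppose the product is positive. Then either both factors are positive, giving $\sigma_1<1$, or both are negative. In the latter case $\sigma_2>1$, so $\sigma_1\sigma_2>1>|p|$, a contradiction.

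For (2), if $\|B\|\le 1$ then the product is $\ge 0$. Conversely, assume the product is $\ge 0$ and $|p|\le 1$, and suppose $\sigma_1>1$. Then $1-\sigma_1^2<0$, so the product is $\ge 0$ only if $1-\sigma_2^2\le 0$, i.e.\ $\sigma_2\ge 1$. This gives $\sigma_1\sigma_2>1\ge|p|$, a contradiction. Hence $\sigma_1\le 1$.

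\textbf{Main obstacle.} The only genuine step is identifying $|b_{12}|^2+|b_{21}|^2$ in terms of $s$ and $ax-p$ via the discriminant and the parallelogram law. The rest is bookkeeping with the two singular values, where the only care needed is handling the boundary case $\sigma_1>1$, $\sigma_2=1$ in (2).
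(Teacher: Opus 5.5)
Your proof is correct and follows essentially the paper's route: the determinant formula rests on $\det B=p$ and the parallelogram identity $|b_{12}|^2+|b_{21}|^2=\tfrac{|s|^2}{2}+\tfrac{|s^2-4(ax-p)|}{2}$, exactly as in the paper, and your use of $\det(I-M)=1-\operatorname{tr}M+\det M$ simply shortcuts the paper's entrywise expansion of $I-B^*B$. For (1) and (2) the paper uses the trace--determinant test for $I-B^*B$ together with $\operatorname{tr}(I-B^*B)=\det(I-B^*B)+1-|p|^2$; this is the same observation as your $\sigma_1\sigma_2=|p|$ argument ruling out two negative eigenvalues, phrased without singular values.
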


\begin{proof}
	Since $(b_{12}+b_{21}, b_{12}b_{21})=(s, ax-p)$, a routine computation gives that
	\begin{align}\label{eqn_201}
	|s^2-4(ax-p)|=|b_{12}-b_{21}|^2 \quad \text{and} \quad \frac{|s|^2}{2}+\frac{|s^2-4(ax-p)|}{2}=|b_{12}|^2+|b_{21}|^2.
	\end{align}
	Again by a few steps of basic calculations, we have that
	\begin{align*}
		I-B^*B=\begin{pmatrix}
			1-|x_1|^2-|b_{21}|^2 & -\ol{x}b_{12}-a\ol{b}_{21}\\
			-x\ol{b}_{12}-\ol{a}b_{21} & 1-|a|^2-|b_{12}|^2
		\end{pmatrix}
		\end{align*} 
and so, 
\begin{align*}
\det(I-B^*B)
&=(1-|x_1|^2-|b_{21}|^2 )(1-|a|^2-|b_{12}|^2)-|-\ol{x}b_{12}-a\ol{b}_{21}|^2\\
&=1-|a|^2-|x|^2+|p|^2-\frac{|s|^2}{2}-\frac{|s^2-4(ax-p)|}{2},
\end{align*}
where the last equality follows from \eqref{eqn_201}. Again by \eqref{eqn_201}, we have 
\[
\text{tr}(I-B^*B)=2-|a|^2-|x|^2-(|b_{12}|^2+|b_{21}|^2)=2-|a|^2-|x|^2-\frac{|s|^2}{2}-\frac{|s^2-4(ax-p)|}{2}.
\]
Note that $\text{tr}(I-B^*B)=\det(I-B^*B)+(1-|p|^2)$ and so, $\text{tr}(I-B^*B) \geq \det(I-B^*B)$ for $p \in \ol{\mathbb D}$. The desired conclusion now follows from the facts that $\|B\| \leq 1$ if and only if $\text{tr}(I-B^*B), \det(I-B^*B)>0$, and $\|B\| \leq 1$ if and only if $\text{tr}(I-B^*B), \det(I-B^*B)\geq 0$. The proof is now complete.
\end{proof}

Next, we have the following characterization for elements in $\F$.

\begin{prop}\label{prop_302}
	An element $(x, a, p, s) \in \F$ if and only if $(s, ax-p) \in \mathbb{G}_2, p \in \mathbb{D}$ and 
	\[
	1-|a|^2-|x|^2+|p|^2-\frac{|s|^2}{2}-\frac{|s^2-4(ax-p)|}{2}>0.
	\]
\end{prop}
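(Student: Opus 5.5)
Both directions reduce to Lemma \ref{lem_201}, so the plan is to translate membership in $\F$ into a statement about a matrix $B$ and then apply part (1) of that lemma. The definition of $\F$ uses $\|A\|<1$; the display at the start of this section has a typo $\leq$, and we use the strict version.

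\emph{Necessity.} Suppose $(x,a,p,s)\in\F$. Then there is $B=\begin{pmatrix} x & b_{12}\\ b_{21} & a\end{pmatrix}$ with $\|B\|<1$, $\det B=p$ and $b_{12}+b_{21}=s$. Hence $b_{12}b_{21}=ax-p$.
\begin{enumerate}
\item Since $|p|=|\det B|\le \|B\|^2<1$, we get $p\in\D$.
\item Lemma \ref{lem_201}(1) then gives that the displayed quantity, which equals $\det(I-B^*B)$, is positive.
\item Note that $(s,ax-p)=(b_{12}+b_{21},\,b_{12}b_{21})$. Since $\|B\|<1$, each entry satisfies $|b_{12}|,|b_{21}|<1$, because entries are bounded by the norm ($|b_{ij}|=|\langle Be_j,e_i\rangle|$). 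So $(s,ax-p)=(z_1+z_2,z_1z_2)$ with $z_1,z_2\in\D$, i.e. $(s,ax-p)\in\G_2$.
\end{enumerate}

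\emph{Sufficiency.} Assume $(s,ax-p)\in\G_2$, $p\in\D$ and the inequality holds.
\begin{enumerate}
\item By the definition of $\G_2$, write $s=z_1+z_2$ and $ax-p=z_1z_2$ with $z_1,z_2\in\D$.
\item Set $B=\begin{pmatrix} x & z_1\\ z_2 & a\end{pmatrix}$. Then $\det B=ax-z_1z_2=p$, $b_{12}+b_{21}=s$, and $b_{12}b_{21}=ax-p$, so $B$ satisfies the hypotheses of Lemma \ref{lem_201}.
\item The lemma identifies the given positive quantity with $\det(I-B^*B)$. Since $|p|<1$, part (1) yields $\|B\|<1$.
\item Therefore $\pi_{\F}(B)=(x,a,p,s)\in\F$.
\end{enumerate}

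There is no real obstacle beyond bookkeeping: ordering the coordinates correctly, $(x,a,p,s)=(b_{11},b_{22},\det B,b_{12}+b_{21})$, and noting the $\leq$ typo in the definition. The only point needing a remark is the entry bound $|b_{ij}|\le\|B\|$ used to place $(s,ax-p)$ in $\G_2$. In the sufficiency direction we do not even use $|z_j|<1$ directly; it merely guarantees that the factorization $s=z_1+z_2$, $ax-p=z_1z_2$ exists. Any complex pair admits such a factorization, so the $\G_2$ condition is in fact implied by the other two. This redundancy does no harm to the stated equivalence.
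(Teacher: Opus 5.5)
Your proof is correct and follows the paper's own argument. You write $(x,a,p,s)=\pi_{\F}(B)$ with $b_{11}=x$ and $b_{22}=a$, get $p\in\D$ and $(s,ax-p)\in\G_2$ from $|\det B|<1$ and $|b_{12}|,|b_{21}|<1$, and use part (1) of Lemma \ref{lem_201} in both directions. Your closing remark also holds: the identity in Lemma \ref{lem_201} is valid for arbitrary complex $b_{12},b_{21}$, so the condition $(s,ax-p)\in\G_2$ already follows from $p\in\D$ together with the inequality.
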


\begin{proof}
Let $(x, a, p, s) \in \F$. Then there exists $B=(b_{ij})_{i, j=1}^2 \in M_2(\C)$ with $\|B\|<1$ such that 
\[
(b_{11}, b_{22}, \det(B), b_{12}+b_{21})=(x, a, p, s).
\]
Therefore, $B=\begin{pmatrix}
	x & b_{12}\\
	b_{21} & a
\end{pmatrix}$ with $(b_{12}+b_{21}, b_{12}b_{21})=(s, ax-p)$. Evidently, $\det(B), b_{12}, b_{21} \in \mathbb D$ since $\|B\|<1$. Thus, $p \in \mathbb{D}$ and $(s, ax-p)=(b_{12}+b_{21}, b_{12}b_{21}) \in \mathbb{G}_2$. By Lemma \ref{lem_201}, we have 
\[
\det(I-B^*B)=1-|a|^2-|x|^2+|p|^2-\frac{|s|^2}{2}-\frac{|s^2-4(ax-p)|}{2}>0.
\]
Conversely, let $(s, ax-p) \in \mathbb{G}_2, p \in \mathbb{D}$ and let
$\displaystyle 1-|a|^2-|x|^2+|p|^2-\frac{|s|^2}{2}-\frac{|s^2-4(ax-p)|}{2}>0$. Since $(s, ax-p) \in \mathbb{G}_2$, one can choose $b_{12}, b_{21} \in \mathbb{D}$ such that $(b_{12}+b_{21}, b_{12}b_{21})=(s, ax-p)$. For $B=\begin{pmatrix}
	x & b_{12}\\
	b_{21} & a
\end{pmatrix}$, it follows from Lemma \ref{lem_201} that $\|B\|<1$ and so, $\pi_{\F}(B)=(x, a, p, s) \in \F$.
\end{proof}

For $(x, a, p, s) \in \F$, we have by Proposition \ref{prop_302} that $(s, ax-p) \in \Gg$.  So, one can choose $(\lm_1, \lm_2) \in \D^2$ such that $(s, ax-p)=(\lm_1+\lm_2, \lm_1\lm_2)$ and so, $(x, a, p, s)=(x, a, p, \lm_1+\lm_2)$. Let $(x_0, a_0, p_0, s_0)=(\lm_1,\lm_2, -p, a+x)$. Then $(s_0, a_0x_0-p_0)=(a+x, ax) \in \Gg$ since $|a|, |x|<1$. Clearly, $|p_0|=|p|<1$. Moreover, it is not difficult to see that
\[
1-|a_0|^2-|x_0|^2+|p_0|^2-\frac{|s_0|^2}{2}-\frac{|s_0^2-4(a_0x_0-p_0)|}{2}=		1-|a|^2-|x|^2+|p|^2-\frac{|s|^2}{2}-\frac{|s^2-4(ax-p)|}{2},
\]
which, by Proposition \ref{prop_302}, is strictly positive. Again by Proposition \ref{prop_302}, $(\lm_1,\lm_2, -p, a+x) \in \F$ if $(a, x, p, \lm_1+\lm_2) \in \F$. The converse follows by similar arguments. So, we have the following result.

\begin{cor}
	For $(x, a, p, s) \in \C^4$, let $(s, ax-p)=(\lm_1+\lm_2, \lm_1\lm_2) \in \Gg$. Then $(x, a, p, s) \in \F$ if and only if $(\lm_1,\lm_2, -p, a+x) \in \F$.
\end{cor}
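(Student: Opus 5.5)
The plan is to use Proposition \ref{prop_302} as the criterion for membership in $\F$ in both directions. The key observation is that the map
\[
(x,a,p,s)\mapsto(x_0,a_0,p_0,s_0)=(\lm_1,\lm_2,-p,a+x)
\]
simply exchanges the roles of the diagonal pair $\{x,a\}$ and the off-diagonal pair $\{\lm_1,\lm_2\}$. Concretely, if $(x,a,p,s)=\pi_{\F}(B)$ with $B=\begin{pmatrix} x & \lm_1\\ \lm_2 & a\end{pmatrix}$, then $(\lm_1,\lm_2,-p,a+x)=\pi_{\F}(B')$ with $B'=\begin{pmatrix} \lm_1 & x\\ a & \lm_2\end{pmatrix}$. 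Here $B'$ is obtained from $B$ by swapping its two columns, so $\det B'=-\det B=-p$ and $\|B'\|=\|B\|$. This already suggests the whole argument. I would nevertheless phrase it via Proposition \ref{prop_302}, since the decomposition $(s,ax-p)=(\lm_1+\lm_2,\lm_1\lm_2)$ is what the hypothesis provides.

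For the forward direction, assume $(x,a,p,s)\in\F$. Proposition \ref{prop_302} gives $p\in\D$, $(s,ax-p)\in\G_2$, and positivity of the quantity
\[
Q(x,a,p,s)=1-|a|^2-|x|^2+|p|^2-\tfrac{|s|^2}{2}-\tfrac{|s^2-4(ax-p)|}{2}.
\]
I then verify the three conditions for $(x_0,a_0,p_0,s_0)$.

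First, $|p_0|=|p|<1$.

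Second, $a_0x_0-p_0=\lm_1\lm_2+p=ax$, so $(s_0,a_0x_0-p_0)=(a+x,ax)$. This lies in $\G_2$ provided $|a|,|x|<1$. That bound holds because these entries are diagonal entries of a strict contraction $B$ realizing the point, or alternatively because $(x,a,p)\in\E$.

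Third, I check $Q(x_0,a_0,p_0,s_0)=Q(x,a,p,s)$. Using \eqref{eqn_201}, one has $\tfrac{|s|^2}{2}+\tfrac{|s^2-4(ax-p)|}{2}=|\lm_1|^2+|\lm_2|^2$. Similarly, $\tfrac{|s_0|^2}{2}+\tfrac{|s_0^2-4a_0x_0+4p_0|}{2}=\tfrac{|a+x|^2+|a-x|^2}{2}=|a|^2+|x|^2$. Hence both sides of the identity equal $1+|p|^2-|a|^2-|x|^2-|\lm_1|^2-|\lm_2|^2$. Proposition \ref{prop_302} then yields $(\lm_1,\lm_2,-p,a+x)\in\F$.

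For the converse, assume $(\lm_1,\lm_2,-p,a+x)\in\F$. Proposition \ref{prop_302} gives $(a+x,ax)\in\G_2$, hence $a,x\in\D$, and $|p|<1$. The same identity gives $Q(x,a,p,s)>0$. What remains is $(s,ax-p)=(\lm_1+\lm_2,\lm_1\lm_2)\in\G_2$, that is, $\lm_1,\lm_2\in\D$. This is the main point needing care, because the hypothesis only places $(\lm_1,\lm_2)$ in $\Gg$ implicitly through the notation. I would get it by applying the forward-direction reasoning to the point $(\lm_1,\lm_2,-p,a+x)$: by Proposition \ref{prop_302} its first two coordinates are diagonal entries of a strict contraction and thus lie in $\D$. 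The cleanest route is the matrix one: realize the point as $\pi_{\F}(C)$ with $\|C\|<1$, read off $|\lm_1|,|\lm_2|<1$, and conclude.

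Alternatively, the column-swap observation handles both directions at once. Given $B'$ with $\|B'\|<1$ realizing the second point, swapping its columns produces a contraction realizing the first point. This avoids the issue entirely, and I would present it as a remark.
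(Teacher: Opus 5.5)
Your proof is correct and is essentially the paper's argument: you verify the three conditions of Proposition \ref{prop_302} for $(\lm_1,\lm_2,-p,a+x)$, namely $(a+x,ax)\in\G_2$, $|-p|=|p|<1$, and invariance of the defining expression (at both points it equals $1+|p|^2-|a|^2-|x|^2-|\lm_1|^2-|\lm_2|^2$), and you handle the converse the same way. The extra step you flag in the converse is unnecessary, since $(s,ax-p)\in\Gg$ is already part of the hypothesis. In your column-swap remark, add that a strict contraction realizing $(\lm_1,\lm_2,-p,a+x)$ must be $B'$ or $(B')^t$, because its off-diagonal entries have sum $a+x$ and product $ax$; these two matrices have the same norm, so you can swap the columns of $B'$.
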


There is an analogue of Proposition \ref{prop_302} for the set $\ol{\F}$ as the next result shows. 
\begin{prop}\label{prop_closedF}
	Let $(x, a, p, s) \in \C^4$. Then the following are equivalent:
	\begin{enumerate}
		\item $(x, a, p, s) \in \ol{\F}$;
		\item there exists $A=(a_{ij})_{i, j=1}^2 \in M_2(\C)$ with $\|A\| \leq 1$ such that $\pi_{\F}(A)=(x, a, p, s)$;
		\item $(s, ax-p) \in \Gamma, p \in \ol{\mathbb{D}}$ and 
		$\displaystyle 
		1-|a|^2-|x|^2+|p|^2-\frac{|s|^2}{2}-\frac{|s^2-4(ax-p)|}{2} \geq 0.
		$
		\end{enumerate} 
\end{prop}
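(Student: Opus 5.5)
I will prove the cycle of implications $(1)\Rightarrow(2)\Rightarrow(3)\Rightarrow(1)$, with $\Gamma=\ol{\G}_2=\{(z_1+z_2,z_1z_2): z_1,z_2\in\ol{\D}\}$ the closed symmetrized bidisc.

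For $(1)\Rightarrow(2)$ I will use compactness. Take $(x,a,p,s)\in\ol{\F}$ and a sequence $\pi_{\F}(A_n)\to(x,a,p,s)$ with $\|A_n\|<1$. The closed unit ball of $M_2(\C)$ is compact, so a subsequence $A_{n_k}$ converges to some $A$ with $\|A\|\le 1$. Since $\pi_{\F}$ is continuous (it is polynomial), $\pi_{\F}(A)=(x,a,p,s)$.

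For $(2)\Rightarrow(3)$ I will repeat the forward direction of Proposition~\ref{prop_302} with the inequalities relaxed. Write
\[
A=\begin{pmatrix} x & a_{12}\\ a_{21} & a\end{pmatrix},
\]
so that $a_{12}+a_{21}=s$ and $a_{12}a_{21}=ax-p$. Since $\|A\|\le 1$, the entries satisfy $|a_{12}|,|a_{21}|\le 1$ and $|p|=|\det A|\le 1$. Hence $(s,ax-p)\in\Gamma$ and $p\in\ol{\D}$. Part (2) of Lemma~\ref{lem_201} then gives $\det(I-A^*A)\ge 0$, which is exactly the displayed inequality.

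For $(3)\Rightarrow(2)$ I will argue as in the converse of Proposition~\ref{prop_302}. Because $(s,ax-p)\in\Gamma$, I can choose $b_{12},b_{21}\in\ol{\D}$ with $b_{12}+b_{21}=s$ and $b_{12}b_{21}=ax-p$. Set
\[
B=\begin{pmatrix} x & b_{12}\\ b_{21} & a\end{pmatrix}.
\]
Then $\det B=ax-b_{12}b_{21}=p$ with $|p|\le 1$, and Lemma~\ref{lem_201} gives $\det(I-B^*B)\ge 0$. Part (2) of Lemma~\ref{lem_201} yields $\|B\|\le 1$, and clearly $\pi_{\F}(B)=(x,a,p,s)$.

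It remains to show $(2)\Rightarrow(1)$, which I expect to be the only step needing care. Given $A$ with $\|A\|\le 1$, put $A_r=rA$ for $0<r<1$. Then $\|A_r\|<1$, so $\pi_{\F}(A_r)\in\F$. As $r\to 1$, continuity gives $\pi_{\F}(A_r)\to\pi_{\F}(A)$, so $\pi_{\F}(A)\in\ol{\F}$.

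The main obstacle is minor: the paper's section opener writes $\|B\|\le 1$ in the definition of $\F$. I will read $\F$ as defined in the introduction, with $\|A\|<1$. Under that reading the cycle above settles all equivalences.
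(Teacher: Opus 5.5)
Your proposal is correct and follows the paper's proof. You use compactness of the closed unit ball for $(1)\Rightarrow(2)$, the scaling $rA$ with $r\to 1$ for $(2)\Rightarrow(1)$, and for $(2)\Leftrightarrow(3)$ you rerun the argument of Proposition~\ref{prop_302} with Part (2) of Lemma~\ref{lem_201}, which the paper only asserts and you spell out; you are also right to read $\F$ with $\|A\|<1$, since the $\|B\|\le 1$ at the start of Section~\ref{sec_002} is a typo.
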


\begin{proof}  
	It is straightforward to verify that the proof of $(2) \iff (3)$ proceeds along the same lines as that of Proposition \ref{prop_302}. So, we prove $(1) \implies (2)$ and $(2) \implies (1)$.
	
	\medskip 
	
	\noindent $(1) \implies (2)$. Pick a sequence $\alpha_n \in \F$ such that $\alpha_n \to (x, a, p, s)$. Then one can choose $A_n \in M_2(\C)$ with $\|A_n\|<1$ for every $n \in \N$ such that $\pi_{\F}(A_n)=\alpha_n$. Passing to a convergent subsequence, if necessary, we have that $\pi_\F(A)=\lim_{n \to \infty}\pi_\F(A_n)=\lim_{n \to \infty} \alpha_n=(x, a, p, s)$.
	
	\medskip 
	
\noindent 	$(2) \implies (1)$. Let $\alpha=\pi_\F(A)$ for some $A=(a_{ij})_{i, j=1}^2 \in M_2(\C)$ with $\|A\| \leq 1$. For $r \in (0, 1)$, we have $\|rA\|<1$ and so, $\pi_\F(rA)=(ra_{11}, ra_{22}, r^2\det(A), ra_{12}+ra_{21}) \in \F$. It is easy to see that $\lim_{r\to 1}\pi_\F(rA) = \pi_{\F}(A)=\alpha$ and thus, $\alpha \in \ol{\F}$. The proof is now complete.
\end{proof}

The next lemma follows from Proposition \ref{prop_closedF}, and the fact that $\F$ is $(1, 1, 2, 1)$-quasi-balanced.

\begin{lem}\label{lem_Fr}
	Let $(x_1, x_2, x_3, x_4) \in \overline{\mathbb{F}}$ and let $r \in (0,1)$. Then $(rx_1, rx_2, r^2x_3, rx_4) \in \mathbb{F}$. 
\end{lem}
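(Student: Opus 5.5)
The plan is to lift the point to a matrix using Proposition \ref{prop_closedF} and then shrink that matrix. Since $(x_1, x_2, x_3, x_4) \in \overline{\mathbb{F}}$, the implication $(1) \implies (2)$ of Proposition \ref{prop_closedF} gives a matrix $A=(a_{ij})_{i,j=1}^2 \in M_2(\C)$ with $\|A\| \leq 1$ and
\[
\pi_{\F}(A)=(a_{11}, a_{22}, \det(A), a_{12}+a_{21})=(x_1, x_2, x_3, x_4).
\]

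For $r \in (0,1)$ we consider the matrix $rA$. Its norm satisfies $\|rA\| = r\|A\| \leq r < 1$. Applying $\pi_{\F}$ and using that $\pi_{\F}$ is linear in the entries except for the determinant, which is homogeneous of degree $2$, we get
\[
\pi_{\F}(rA)=(ra_{11}, ra_{22}, r^2\det(A), r(a_{12}+a_{21}))=(rx_1, rx_2, r^2x_3, rx_4).
\]
This point lies in $\F$ by the very definition of $\F$ as the image under $\pi_{\F}$ of the open unit ball of $M_2(\C)$. This completes the argument.

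There is no real obstacle here. The only point needing care is that we need a preimage matrix of norm at most $1$ for a point of the closure. This is exactly what Proposition \ref{prop_closedF} supplies: its compactness argument extracts a limit of preimages. A proof based only on quasi-balancedness would need more care, since $\lambda = r$ acting on a boundary point is not directly covered by the definition.

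Alternatively, one could verify condition (3) of Proposition \ref{prop_closedF} with strict inequalities and invoke Proposition \ref{prop_302}, but the matrix argument avoids that computation.
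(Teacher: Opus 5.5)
Your proof is correct and takes essentially the same route as the paper. The paper derives this lemma from Proposition \ref{prop_closedF} and the $(1,1,2,1)$-homogeneity of $\pi_{\F}$: lift the closure point to a matrix $A$ with $\|A\| \leq 1$, then note that $\|rA\|<1$ and $\pi_{\F}(rA)=(rx_1, rx_2, r^2x_3, rx_4)$, exactly as in the $(2)\implies(1)$ step of that proposition.
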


The domains $\Gg$, $\E$ and $\Pe$ exhibit a particularly close relationship with the hexablock $\He$, as explored in Chapter 10 of \cite{Hexablock}. In a similar spirit, we establish analogous connection of $\F$ with the domains $\G_2$, $\E$ and $\Pe$. To do so, we recall a few useful results from the literature in this direction. 

\subsection*{The symmetrized bidisc} The symmetrized bidisc is a bounded domain in $\C^2$ which was introduced by Agler and Young \cite{AglerYoung} in 1999. It is defined as 
\[
\Gg=\{(z_1+z_2, z_1z_2) \in \C^2 : z_1, z_2 \in \D \},
\]
and its closure is given by $\Gamma=\{(z_1+z_2, z_1z_2) \in \C^2 : z_1, z_2 \in \overline{\D} \}$. We mention a few important results in this direction from the literature \cite{AglerYoung, Agler2003, AglerIII, Agler2004,  Tirtha_Pal, Costara2004, Costara2005}.

\begin{thm}[\cite{AglerIII}, Theorem 2.1]\label{thmG_2}
	For  $(s, p) \in \C^2$, the following are equivalent:
	\begin{enumerate}
		\item $(s, p) \in \Gg$;
		\item $|s-\overline{s}p|<1-|p|^2$;
		\item $2|s-\overline{s}p|+|s^2-4p|<4-|s|^2$;
		\item there exists $A \in M_2(\C)$ with $\|A\|<1$ such that $s=\text{tr}(A)$ and $p=\det(A)$.
	\end{enumerate}
\end{thm}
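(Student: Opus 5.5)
We prove the cycle $(1)\Rightarrow(4)\Rightarrow(2)\Rightarrow(3)\Rightarrow(1)$ for the symmetrized bidisc $\Gg$.

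\textbf{$(1)\Rightarrow(4)$.} If $s=z_1+z_2$ and $p=z_1z_2$ with $z_1,z_2\in\D$, take $A=\diag(z_1,z_2)$. Then $\|A\|=\max|z_j|<1$, $\text{tr}(A)=s$ and $\det(A)=p$.

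\textbf{$(4)\Rightarrow(2)$.} Schur-triangularize by a unitary, so we may assume $A=\begin{pmatrix} z_1 & c\\ 0 & z_2\end{pmatrix}$ with $\|A\|<1$. Then $z_1,z_2\in\D$.
\begin{itemize}
\item The quantity $f(s,p)=1-|p|^2-|s-\ol{s}p|$ is determined by the eigenvalues.
\item We have $s-\ol{s}p=z_1(1-|z_2|^2)+z_2(1-|z_1|^2)$.
\item The identity
\[
(1-|p|^2)^2-|s-\ol{s}p|^2=(1-|z_1|^2)(1-|z_2|^2)\,|1-\ol{z_1}z_2|^2 ,
\]
a routine expansion, is strictly positive for $z_1,z_2\in\D$.
\item Since $1-|p|^2>0$, this gives $|s-\ol{s}p|<1-|p|^2$.
\end{itemize}
So (4) only enters through $|z_j|<1$, and this step is really $(1)\Rightarrow(2)$.

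\textbf{$(2)\Rightarrow(3)$ and $(3)\Rightarrow(1)$.} Both go through the roots $z_1,z_2$ of $t^2-st+p$, so that $s=z_1+z_2$, $p=z_1z_2$ and $|s^2-4p|=|z_1-z_2|^2$. It then suffices to show that each of (2) and (3) forces $|z_1|,|z_2|<1$.

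For (2), this forces $|p|<1$. The argument is by continuity along $(z_1,z_2)\mapsto (rz_1,rz_2)$, starting from $r=0$. On the boundary, where say $|z_1|=1$ and $|z_2|\le 1$, one computes $s-\ol{s}p=z_1(1-|z_2|^2)$. Hence $|s-\ol{s}p|=1-|z_2|^2=1-|p|^2$, and the strict inequality fails.

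For (3), the same boundary computation applies. When $|z_1|=1$, the identity
\[
4-|s|^2-|z_1-z_2|^2=2\bigl(2-|z_1|^2-|z_2|^2\bigr)
\]
(from the parallelogram law) combined with the value of $|s-\ol{s}p|$ gives equality $2|s-\ol{s}p|+|s^2-4p|=4-|s|^2$. So the region defined by (3) cannot meet the locus where some $|z_j|=1$.

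Since (3) holds near $(0,0)$, a connectedness and homogeneity argument finishes the proof: if $(s,p)$ satisfies (3), consider the path $r\mapsto (rs,r^2p)$, $r\in[0,1]$. Along this path the roots scale by $r$, and both (2) and (3) are preserved under this scaling.

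\textbf{Main obstacle.} The hardest step is the monotonicity that makes the scaling work, namely that (3) holding at $r=1$ implies it for all smaller $r$. This must be checked directly from the homogeneity of each term in $r$. If that direct check fails, I would fall back on solving (3) as a condition on $|z_1|,|z_2|$: write it as
\[
|z_1|^2+|z_2|^2+|z_1-z_2|^2/2+\dots<2
\]
using the parallelogram law, and conclude $\max|z_j|<1$ directly.
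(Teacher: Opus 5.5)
The paper gives no proof of this theorem; it is quoted from Agler--Young, so your argument is judged on its own. Your steps $(1)\Rightarrow(4)\Rightarrow(2)$ are fine, and the identity $(1-|p|^2)^2-|s-\ol{s}p|^2=(1-|z_1|^2)(1-|z_2|^2)|1-\ol{z_1}z_2|^2$ is correct. There is a genuine gap in everything involving (3).

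\textbf{No implication into (3) is proved.} You announce $(2)\Rightarrow(3)$, but you only attempt $(2)\Rightarrow(1)$ and $(3)\Rightarrow(1)$. Your claim that this ``suffices'' is false without $(1)\Rightarrow(3)$. Showing that (3) becomes an equality on the locus $|z_1|=1$ does not show that (3) holds throughout $\D^2$. For that you would need to know that equality cannot occur at an interior point.

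\textbf{The scaling argument is unjustified.} Your continuity argument along $r\mapsto(rs,r^2p)$ for $(3)\Rightarrow(1)$ (and likewise for $(2)\Rightarrow(1)$) needs (3) at $r=1$ to imply (3) at the first $r_0$ with $r_0\max_j|z_j|=1$. You assert that (2) and (3) are preserved by this scaling, but homogeneity does not give this. The quantity $s-\ol{s}p$ becomes $r(s-r^2\ol{s}p)$, which is not a multiple of $s-\ol{s}p$; for $s,p>0$, $|s-r^2\ol{s}p|$ exceeds $|s-\ol{s}p|$ when $r<1$. The monotonicity is true, but only as a consequence of the theorem itself, so invoking it is circular. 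Your fallback is only sketched, and its displayed form is off: the $|z_1-z_2|^2$ term cancels entirely.

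\textbf{How to close the gaps.} Both close with tools you already have, and no continuity argument is needed.

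For $(2)\Rightarrow(1)$: your identity is a polynomial identity, valid for all $z_1,z_2\in\C$. Under (2) its left side is positive and $|z_1z_2|=|p|<1$. Hence $(1-|z_1|^2)(1-|z_2|^2)>0$, and both moduli cannot exceed $1$, so $z_1,z_2\in\D$.

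For (3): the formula $s-\ol{s}p=z_1(1-|z_2|^2)+z_2(1-|z_1|^2)$ holds for all $z_1,z_2$. The parallelogram law gives $4-|s|^2-|s^2-4p|=2(2-|z_1|^2-|z_2|^2)$. So (3) is equivalent to
\[
\bigl|z_1(1-|z_2|^2)+z_2(1-|z_1|^2)\bigr|<(1-|z_1|^2)+(1-|z_2|^2).
\]
Then $(1)\Rightarrow(3)$ is just the triangle inequality. For $(3)\Rightarrow(1)$, suppose $|z_1|\ge 1$. Positivity of the right side forces $|z_2|<1$. The reverse triangle inequality then bounds the left side minus the right side from below by $(|z_1|-1)(1-|z_2|^2)+(1-|z_2|)(|z_1|^2-1)\ge 0$, a contradiction.
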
	

For a bounded domain $\Omega$ in $\C^n$, let $A(\Omega)$ be the algebra of continuous functions on $\overline{\Omega}$ which are holomorphic in $\Omega$. The \textit{distinguished boundary} $b\Omega$ (or the Shilov boundary $\pr_s \Omega$) of $\Omega$ is the smallest closed subset of $\overline{\Omega}$ such that every function in $A(\Omega)$ attains its maximum modulus on $b\Omega$. Agler and Young \cite{AglerIII} proved that $b\Gamma=\{(z_1+z_2, z_1z_2) : z_1, z_2 \in \T\}$. The next result gives criteria for elements in $b\Gamma$. 

\begin{thm}[\cite{Tirtha_Pal}, Theorem 1.3]\label{thm_bGam}
	Let $(s, p) \in \C^2$. The following are equivalent:
	\begin{enumerate}
		\item $(s, p) \in b\Gamma$;
		\item $|p|=1, s=\overline{s}p$ and $|s| \leq 2$;
		\item $(s, p) \in \Gamma$ and $|p|=1$;
		\item $|p|=1$ and there exists $\beta \in \T$ such that $s=\beta+\overline{\beta}p$.
	\end{enumerate}
\end{thm}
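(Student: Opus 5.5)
The plan is to prove the cycle $(1)\Rightarrow(3)\Rightarrow(2)\Rightarrow(4)\Rightarrow(1)$, together with the short side implication $(3)\Rightarrow(1)$. Throughout we use the description $b\Gamma=\{(z_1+z_2,z_1z_2): z_1,z_2\in\T\}$ due to Agler and Young.

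\emph{$(1)\Rightarrow(3)$.} If $(s,p)=(z_1+z_2,z_1z_2)$ with $z_1,z_2\in\T$, then $(s,p)\in\Gamma$ and $|p|=1$.

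\emph{$(3)\Rightarrow(2)$.} Write $(s,p)=(z_1+z_2,z_1z_2)$ with $z_1,z_2\in\ol{\D}$. Since $|z_1z_2|=1$, both $z_1$ and $z_2$ lie in $\T$. Then $\ol{z_j}=z_j^{-1}$, so
\[
\ol{s}p=(\ol{z_1}+\ol{z_2})z_1z_2=z_2+z_1=s.
\]
The bound $|s|\le 2$ follows from the triangle inequality.

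\emph{$(2)\Rightarrow(4)$.} This is the main step. Given $|p|=1$, $s=\ol{s}p$ and $|s|\le2$, we look for $\beta\in\T$ with $\beta^2-s\beta+p=0$. Any such root automatically gives $s=\beta+p/\beta=\beta+\ol{\beta}p$.

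Choose a square root $\omega\in\T$ of $p$, so $p=\omega^2$. From $s=\ol{s}p$ we get $s\ol{\omega}=\ol{s}\omega=\ol{s\ol{\omega}}$, hence $t:=s\ol{\omega}$ is real with $|t|\le2$. Set $\beta=\omega e^{i\theta}$, where $\cos\theta=t/2$. Then
\[
\beta+\ol{\beta}p=\omega e^{i\theta}+\omega e^{-i\theta}=2\omega\cos\theta=\omega t=s .
\]

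\emph{$(4)\Rightarrow(1)$.} Put $z_1=\beta$ and $z_2=\ol{\beta}p$. Both lie in $\T$, and $z_1z_2=|\beta|^2p=p$, $z_1+z_2=s$. Hence $(s,p)\in b\Gamma$ by the Agler--Young description.

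\emph{$(3)\Rightarrow(1)$.} This also follows directly: the factorisation $(s,p)=(z_1+z_2,z_1z_2)$ from the step $(3)\Rightarrow(2)$ already has $z_1,z_2\in\T$.

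The only delicate point is the choice of the square root $\omega$ in the step $(2)\Rightarrow(4)$. We handle it by the reality argument above rather than by solving the quadratic $\beta^2-s\beta+p=0$ with the quadratic formula, which would require controlling the modulus of its roots directly.
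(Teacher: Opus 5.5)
Your proof is correct. The paper gives no proof of this theorem: it quotes it from \cite{Tirtha_Pal} and uses it as a black box. The paper also takes as known the Agler--Young description $b\Gamma=\{(z_1+z_2,z_1z_2): z_1,z_2\in\T\}$ from \cite{AglerIII}, which is the same input you use, so there is no in-paper argument to compare your route against.

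Given that description, your cycle $(1)\Rightarrow(3)\Rightarrow(2)\Rightarrow(4)\Rightarrow(1)$ is complete and elementary. The only substantive step is $(2)\Rightarrow(4)$. There, rotating by a square root $\omega$ of $p$ makes $s\ol{\omega}$ real with modulus at most $2$. This gives explicit points $z_1=\omega e^{i\theta}$ and $z_2=\omega e^{-i\theta}$ on $\T$ with $z_1+z_2=s$ and $z_1z_2=p$, and $\beta=z_1$.

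The requirement $\beta\in\T$ in (4) is what makes this step necessary. If (4) only asked for $|\beta|\le 1$, the choice $\beta=s/2$ would already work, since $\ol{s}p=s$.

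Your separate implication $(3)\Rightarrow(1)$ is redundant once the cycle is closed, but it does no harm.
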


\subsection*{The tetrablock} The symmetrized bidisc and its connection with $\mu$-synthesis has led to various other domains corresponding to different cases of $\mu$-synthesis. In this direction, Abouhajar et al. \cite{Abouhajar} introduced a domain in 2007 which is defined as
\[
\E=\{(x_1, x_2, x_3) \in \C^3 \ : \ 1-x_1z_1-x_2z_2+x_3z_1z_2 \ne 0 \ \text{whenever} \ |z_1| \leq 1, |z_2| \leq 1 \}.
\]
The set $\E$ is a bounded domain in $\C^3$ which is referred to as the tetrablock and its closure is denoted by $\Ebar$. The domain tetrablock has turned out to be a domain of independent interest in several complex variables \cite{Abouhajar, Alsalhi, EdigarianI, Kosi, zwonek1, NT, Young, Zwonek} and operator theory \cite{Ball_Sau, Tirtha, Bisai-Pal1, Bisai-Pal2, Pal_E1, Pal_E2, Pal_E3}. We recall here a few important facts about the tetrablock useful for our purpose.

\begin{thm}[\cite{Abouhajar}, Theorem 2.2]\label{tetrablock}
	For $(x_1, x_2, x_3) \in \C^3$, the following are equivalent:
	\begin{enumerate}
		\item $(x_1, x_2, x_3) \in \E$ ;
		\item $|x_1|^2+|x_2-\overline{x}_1x_3|+|x_1x_2-x_3|<1$;
		\item there is a $2 \times 2$ matrix $A=(a_{ij})$ such that $\|A\|<1$ and $x=(a_{11}, a_{22}, \det(A))$;
		\item $1-|x_1|^2-|x_2|^2+|x_3|^2>2|x_1x_2-x_3|$ and if $x_1x_2=x_3$ then, in addition, $|x_1|+|x_2|<2$.
	\end{enumerate}	
\end{thm}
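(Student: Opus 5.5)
We prove the cycle $(1)\Rightarrow(3)\Rightarrow(4)\Rightarrow(2)\Rightarrow(1)$, starting from two elementary observations. The first is that for any $A\in M_2(\C)$ we have $\det(I-A^*A)=1-|a_{11}|^2-|a_{12}|^2-|a_{21}|^2-|a_{22}|^2+|\det A|^2$. The second is that $\|A\|<1$ holds exactly when this quantity is positive together with $\operatorname{tr}(I-A^*A)>0$; when $|\det A|<1$ the trace condition is automatic, as in the proof of Lemma \ref{lem_201}.

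\emph{Step $(3)\Rightarrow(1)$.} Given $A$ with $\|A\|<1$, put $x=(a_{11},a_{22},\det A)$. For $z_1,z_2\in\ol{\D}$ the expression $1-x_1z_1-x_2z_2+x_3z_1z_2$ equals $\det(I-AZ)$ with $Z=\diag(z_1,z_2)$. Since $\|AZ\|<1$, this determinant is nonzero, so $x\in\E$.

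\emph{Step $(1)\Rightarrow(2)$.} Fix $x\in\E$. Setting $z_2=0$ and letting $z_1$ range over $\ol{\D}$ shows that $1-x_1z_1\neq0$, hence $|x_1|<1$. For $|z_2|\le1$, the zero set in $z_1$ of $1-x_2z_2-z_1(x_1-x_3z_2)$ must lie outside $\ol{\D}$. This means $|x_1-x_3z_2|<|1-x_2z_2|$ for all $z_2\in\ol{\D}$; when $x_1-x_3z_2=0$ the condition reduces to $1-x_2z_2\neq0$. Now apply the one-variable criterion that $|\alpha-\beta z|<|\gamma-\delta z|$ on $\ol{\D}$ holds if and only if the associated Möbius map is a self-map of the disc. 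Computing its supremum on $\T$ produces the inequality $|x_2-\ol{x}_1x_3|+|x_1x_2-x_3|<1-|x_1|^2$, which is exactly (2).

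This step is where I expect the main computation. To organize it, I will write $\sup_{z\in\T}\bigl|\frac{x_1-x_3z}{1-x_2z}\bigr|<1$ as the condition that $\begin{pmatrix}1&0\\0&-1\end{pmatrix}$-type Hermitian forms are positive. Equivalently, one can use the identity
\[
|1-x_2z|^2-|x_1-x_3z|^2=(1-|x_1|^2)-2\operatorname{Re}\bigl((x_2-\ol{x}_1x_3)z\bigr)+(|x_2|^2-|x_3|^2)|z|^2 ,
\]
and minimize it over $|z|\le1$.

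\emph{Step $(2)\Rightarrow(4)$.} This is algebra. The identity above at $|z|=1$ gives positivity of $1-|x_1|^2+|x_2|^2-|x_3|^2-2|x_2-\ol{x}_1x_3|$ together with $|x_1|<1$. By symmetry we also get the version with $x_1$ and $x_2$ interchanged. From these, using $|x_2-\ol{x}_1x_3|^2-|x_1x_2-x_3|^2=(1-|x_1|^2)(|x_2|^2-|x_3|^2)$, one derives $1-|x_1|^2-|x_2|^2+|x_3|^2>2|x_1x_2-x_3|$. In the degenerate case $x_1x_2=x_3$, condition (2) forces $|x_1|<1$ and $|x_2|<1$, so $|x_1|+|x_2|<2$.

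\emph{Step $(4)\Rightarrow(3)$.} This is a construction. Choose $b_{12},b_{21}$ with $b_{12}b_{21}=x_1x_2-x_3$ and $|b_{12}|=|b_{21}|=|x_1x_2-x_3|^{1/2}$, and set $A=\begin{pmatrix}x_1&b_{12}\\b_{21}&x_2\end{pmatrix}$. Then $\det A=x_3$. By the determinant formula, $\det(I-A^*A)=1-|x_1|^2-|x_2|^2+|x_3|^2-2|x_1x_2-x_3|>0$. It remains to check $\operatorname{tr}(I-A^*A)>0$. This follows because $\det>0$ together with $\operatorname{tr}\le0$ is impossible for a Hermitian matrix whose diagonal entries cannot both be negative. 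The diagonal entries are $1-|x_1|^2-|b_{21}|^2$ and $1-|x_2|^2-|b_{12}|^2$. When $x_1x_2=x_3$ the added hypothesis $|x_1|+|x_2|<2$ rules out both being nonpositive. Otherwise, the inequality in (4) gives $|x_i|<1$ after a short check. Hence $\|A\|<1$.
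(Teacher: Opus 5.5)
\textbf{The gap is in your Step $(4)\Rightarrow(3)$, and it cannot be closed, because item (4) as stated does not imply (1).} Take $x=(0,0,2)$. Here $x_1x_2\neq x_3$, so only the inequality is required, and it holds: $1-0-0+4=5>4=2|x_1x_2-x_3|$. But $1-x_1z_1-x_2z_2+x_3z_1z_2=1+2z_1z_2$ vanishes at $(z_1,z_2)=(1,-\tfrac12)$, so $x\notin\E$. Items (2) and (3) fail too: the left side of (2) equals $2$, and (3) would need $|\det A|\le\|A\|^2<1$.

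Your construction shows where the argument breaks. It produces $A=\begin{pmatrix}0&\sqrt2\\-\sqrt2&0\end{pmatrix}$, for which $I-A^*A=-I$ has positive determinant and negative trace. Your sentence ``the inequality in (4) gives $|x_i|<1$ after a short check'' is false: $x=(2,0,10)$ satisfies the inequality, since $97>20$. Even when $|x_1|,|x_2|<1$ it would not help, because the diagonal entries of $I-A^*A$ are $1-|x_i|^2-|x_1x_2-x_3|$, not $1-|x_i|^2$.

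The side condition actually needed is $|x_3|<1$. The inequality $1-|x_1|^2-|x_2|^2+|x_3|^2>2|x_1x_2-x_3|$ together with $|x_3|<1$ does characterize $\E$. Under it, your own second observation finishes the step, since $\operatorname{tr}(I-A^*A)=\det(I-A^*A)+1-|x_3|^2>0$. In the degenerate case $x_1x_2=x_3$ your use of $|x_1|+|x_2|<2$ is correct; the flaw is that the statement imposes nothing extra when $x_1x_2\neq x_3$. The paper quotes this theorem without proof, so there is no argument there to compare with; the defect lies in the transcribed item (4) itself.

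If you replace (4) by the corrected version, your Step $(2)\Rightarrow(4)$ must also produce $|x_3|<1$, and this does follow from (2). Put $P=|x_2-\ol{x}_1x_3|$, $Q=|x_1x_2-x_3|$ and $R=1-|x_1|^2$. The identity $x_2R=(x_2-\ol{x}_1x_3)-\ol{x}_1(x_1x_2-x_3)$ gives $|x_2|\le(P+|x_1|Q)/R$. Hence $|x_3|\le|x_1||x_2|+Q\le(|x_1|P+Q)/R<1$.

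There is also a smaller slip in Step $(1)\Rightarrow(2)$. The supremum over $\T$ of $|(x_1-x_3z)/(1-x_2z)|$ is $(|x_1-\ol{x}_2x_3|+|x_1x_2-x_3|)/(1-|x_2|^2)$. That yields (2) with $x_1$ and $x_2$ interchanged, not (2) itself. You can fix this in either of two ways:
\begin{enumerate}
\item Interchange the roles of $z_1$ and $z_2$ and use $(x_2-x_3z)/(1-x_1z)$. It maps $\T$ onto the circle with centre $(x_2-\ol{x}_1x_3)/(1-|x_1|^2)$ and radius $|x_1x_2-x_3|/(1-|x_1|^2)$.
\item Keep your quadratic identity. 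Its minimum over $\T$ equals $\bigl((R-P)^2-Q^2\bigr)/R$. Then use $P\le|x_2|R+|x_1|Q<R+Q$, valid since $|x_2|<1$, to rule out the branch $P-R>Q$.
\end{enumerate}
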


We recall from \cite{Abouhajar} the description of the distinguished boundary of the tetrablock.

\begin{thm}[\cite{Abouhajar}, Theorem 7.1]\label{thm6.1}
	For $x=(x_1, x_2, x_3) \in \C^3$, the following are equivalent:
	\begin{enumerate}
		\item $x_1=\overline{x}_2x_3, |x_3|=1$ and $|x_2|\leq 1$;
		\item there is a $2 \times 2$ unitary matrix $U=(u_{ij})$ such that $x=(u_{11}, u_{22}, \det(U))$; 
		\item $x \in b\E$;
		\item $x \in \overline{\E}$ and $|x_3|=1$. 
	\end{enumerate}	
\end{thm}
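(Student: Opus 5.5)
The statement is Theorem \ref{thm6.1}, the description of the distinguished boundary $b\E$ of the tetrablock, quoted from \cite{Abouhajar}. I would prove the equivalences in the order $(1)\Rightarrow(2)\Rightarrow(4)\Rightarrow(1)$, and then tie in $(3)$ separately.

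\emph{Step $(1)\Rightarrow(2)$.} Assume $x_1=\overline{x}_2x_3$, $|x_3|=1$ and $|x_2|\le1$. I would build a unitary $U$ with diagonal $(x_1,x_2)$ and determinant $x_3$. Take any $\omega\in\T$ and set
\[
U=\begin{pmatrix} x_1 & \omega\sqrt{1-|x_2|^2}\\ -\overline{\omega}x_3\sqrt{1-|x_2|^2} & x_2\end{pmatrix}.
\]
Its determinant is $x_1x_2+x_3(1-|x_2|^2)=x_3|x_2|^2+x_3-x_3|x_2|^2=x_3$. Its columns are orthonormal: $|x_1|=|x_2|$ gives unit length, and orthogonality reduces to $\overline{x}_1\omega+\overline{\omega}\,\overline{x}_3x_2$ times the root being zero. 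This fixes the choice $\omega=\overline{x}_3x_2\cdot\overline{\ }$ up to a routine check using $\overline{x}_1=x_2\overline{x}_3$. If the check fails I would swap the signs of the off-diagonal entries.

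\emph{Step $(2)\Rightarrow(4)$.} A unitary $U$ has $\|U\|\le1$, so $rU$ with $r<1$ gives points of $\E$ by Theorem \ref{tetrablock}(3). Letting $r\to1$ shows $x\in\Ebar$, and $|\det U|=1$.

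\emph{Step $(4)\Rightarrow(1)$.} Apply the closed version of Theorem \ref{tetrablock}(4), obtained by taking limits:
\[
1-|x_1|^2-|x_2|^2+|x_3|^2\ge2|x_1x_2-x_3|.
\]
With $|x_3|=1$ this reads $2-|x_1|^2-|x_2|^2\ge2|x_1x_2-x_3|$. Since $|x_3|=1$ we have $|x_1x_2-x_3|=|\overline{x}_3x_1x_2-1|\ge1-|x_1||x_2|$. Combining, $(|x_1|-|x_2|)^2\le0$, so $|x_1|=|x_2|$, with equality throughout. Equality in the triangle inequality then forces $\overline{x}_3x_1x_2=|x_1|^2\ge0$. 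This gives $x_1=\overline{x}_2x_3$ when $x_2\neq0$, and trivially when $x_2=0$. Finally $|x_2|\le1$ because $\Ebar$ is bounded in the closed bidisc in each coordinate.

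\emph{Linking $(3)$.} This is the main obstacle. First, $b\E\subseteq\{x\in\Ebar:|x_3|=1\}$. Each $f\in A(\E)$ satisfies $f(x)=f(\pi(A))$, where $\pi(A)=(a_{11},a_{22},\det A)$ for $\|A\|\le1$. The function $A\mapsto f(\pi(A))$ is holomorphic on the open unit ball of $M_2(\C)$ and continuous on its closure. Its maximum is therefore attained on the Shilov boundary of that ball, namely the unitaries. So the set in $(2)$ is a closed boundary, and it contains $b\E$. Conversely, each point with $|x_3|=1$ is a peak point: the function $g(y)=\tfrac12(1+\overline{x}_3y_3)$ peaks exactly on $\{y_3=x_3\}\cap\Ebar$. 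Intersecting with a peak function for the fibre, built from $x_1$, gives a peak at $x$. I expect the fibre peaking to be the delicate part. I would try $h(y)=\tfrac14(2+\overline{x}_1y_1+\overline{x}_2y_2)$ combined multiplicatively with $g$, using $|y_1|=|y_2|$ on the fibre from Step $(4)\Rightarrow(1)$.
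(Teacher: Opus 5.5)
The paper gives no proof of this statement; it is quoted from Abouhajar--White--Young, so your proposal has to stand on its own. The cycle $(1)\Rightarrow(2)\Rightarrow(4)\Rightarrow(1)$ is sound. So is your proof that the set $K=\{(u_{11},u_{22},\det U): U \text{ unitary}\}$ is a closed boundary, which gives $b\E\subseteq K$. You should add that $\Ebar=\{(a_{11},a_{22},\det A):\|A\|\le 1\}$; this follows by compactness, exactly as in the paper's proof for $\overline{\F}$.

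\textbf{The gap is the reverse inclusion $K\subseteq b\E$.} It is not a formality. For the pentablock, the image of the unitaries is strictly larger than $b\Pe$: the unitary with rows $(r,\sqrt{1-r^2})$ and $(\sqrt{1-r^2},-r)$, $0<r\le 1$, gives the point $(\sqrt{1-r^2},0,-1)\in\Pbar$. This point is not in $b\Pe$, since $|a|^2+|s|^2/4=1-r^2$.

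Your fibre function does not work. By your own step $(4)\Rightarrow(1)$, the fibre is
\[
\{y\in\Ebar: y_3=x_3\}=\{(x_3\bar y_2,y_2,x_3): |y_2|\le 1\}.
\]
On it $\bar x_1y_1=x_2\bar y_2$, so
\[
h(y)=\tfrac14\bigl(2+\bar x_1y_1+\bar x_2y_2\bigr)=\tfrac12\bigl(1+\operatorname{Re}(\bar x_2y_2)\bigr).
\]
This reaches $1$ on the fibre only if $|x_2|=1$. For $0<|x_2|<1$ its maximum on the fibre is at $y_2=x_2/|x_2|$, not at $x$. For $x=(0,0,\eta)$ it is the constant $\tfrac12$. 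So $gh$ peaks at $x$ only on the torus $|x_2|=|x_3|=1$, and every point of $K$ with $|x_2|<1$ is left unaccounted for.

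\textbf{The missing idea.} A holomorphic polynomial restricted to the fibre becomes a polynomial in $y_2$ and $\bar y_2$. To peak at an interior point of the fibre disc you need the quadratic term $y_1y_2$, which equals $x_3|y_2|^2$ there. For example,
\[
\psi(y)=1-\tfrac14\bigl(|x_2|^2-\bar x_1y_1-\bar x_2y_2+\bar x_3y_1y_2\bigr)
\]
equals $1-\tfrac14|y_2-x_2|^2$ on the fibre. Hence $\psi(x)=1$, and $\psi\in[0,1)$ at the other fibre points.

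Since $|\psi|$ may exceed $1$ off the fibre (e.g.\ $\psi(1,-1,-1)=\tfrac54$ when $x=(0,0,1)$), use $g^N\psi$ instead of a single product:
\begin{enumerate}
\item Fix an open $V\ni x$. By compactness there is $\delta>0$ with $\psi\le 1-\delta$ on the fibre outside $V$.
\item Again by compactness there is $\epsilon>0$ with $|g|\le 1-\epsilon$ on $\{y\in\Ebar\setminus V: |\psi(y)|\ge 1-\delta/2\}$.
\item Hence $\sup_{\Ebar\setminus V}|g^N\psi|<1=g^N\psi(x)$ for large $N$.
\item If $x\notin b\E$, choose $V$ disjoint from the closed set $b\E$; step 3 then contradicts the fact that $|g^N\psi|$ attains its maximum on $b\E$.
\end{enumerate}
At $x=(0,0,\eta)$ there is even a genuine peak function, $\tfrac12\bigl(1+\bar\eta(y_3-y_1y_2)\bigr)$, because $2|y_1y_2-y_3|\le 2-|y_1|^2-|y_2|^2$ on $\Ebar$.

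\textbf{A minor point in $(1)\Rightarrow(2)$.} The inner product of the columns of your $U$ is $\omega\sqrt{1-|x_2|^2}\,(\bar x_1-\bar x_3x_2)$, which is $0$ for every $\omega\in\T$. So your matrix is unitary as written, and the hedging about the choice of $\omega$ and about swapping signs is unnecessary.
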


\subsection*{The pentablock} The pentablock is a domain introduced in 2015 by Agler et al. \cite{AglerIV}. Like the symmetrized bidisc and tetrablock, it naturally appeared in the study of the $\mu$-synthesis problem. The pentablock $\Pe$ is defined as 
\[
\Pe=\{(a_{21}, \text{tr}(A), \det(A)) : A=(a_{ij})_{i, j=1}^2 \in M_2(\C), \|A\|<1 \},
\] 
which is a bounded domain in $\C^3$. Its closure is denoted by $\Pbar$. The pentablock has many interesting geometric properties and has therefore attracted a lot of attention recently. An interested reader is referred to the works \cite{AglerIV, Alsheri, Jindal, JindalII, KosinskiII, PalN, Guicong, GuicongII}. We recall from literature various characterizations and properties of the pentablock.

\begin{thm}[\cite{AglerIV}, Theorem 5.2]\label{pentablock}
	Let $a \in \C$ and let $(s, p)=(\lm_1+\lm_2, \lm_1\lm_2) \in \Gg$. Then the following are equivalent:	
	\begin{enumerate}
		\item $(a, s, p) \in \mathbb{P}$;
		\item $
		\sup_{z \in \D} \bigg|\displaystyle\frac{a(1-|z|^2)}{1-sz+pz^2}\bigg|<1$;
		\item $
		|a|< \frac{1}{2}|1-\overline{\lambda}_2\lambda_1|+\frac{1}{2}\sqrt{(1-|\lambda_1|^2)(1-|\lambda_2|^2)}$.
	\end{enumerate}
\end{thm}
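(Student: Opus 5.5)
The plan is to prove $(1)\iff(3)$ by an explicit extremal computation over matrices, and then $(2)\iff(3)$ by computing the supremum in $(2)$ in closed form. In both cases the target is the radius
\[
R(\lm_1,\lm_2)=\tfrac12|1-\ol{\lm}_2\lm_1|+\tfrac12\sqrt{(1-|\lm_1|^2)(1-|\lm_2|^2)}.
\]
Throughout, $(s,p)=(\lm_1+\lm_2,\lm_1\lm_2)\in\Gg$ is fixed.

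\textbf{Step 1: the admissible $a$ form a disc.} Consider the set $S$ of all $a_{21}$ arising from $A$ with $\|A\|<1$, $\text{tr}(A)=s$ and $\det(A)=p$. Conjugating $A$ by $\diag(1,\omega)$ with $\omega\in\T$ preserves the norm, trace and determinant, and multiplies $a_{21}$ by $\omega$. Hence $S$ is circular. Scaling the off-diagonal part by $t\in[0,1]$, or taking convex combinations of $A$ with its diagonal-unitary conjugates, should show that $S$ is a full disc centred at $0$. So it suffices to compute $\sup S$ and to check that the supremum is not attained under the strict norm condition.

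\textbf{Step 2: the extremal computation.} By Schur's theorem, $A=UTU^*$ with $U$ unitary and
\[
T=\begin{pmatrix}\lm_1 & c\\ 0 & \lm_2\end{pmatrix}.
\]
Then $a_{21}=\langle Tu,v\rangle$ for the orthonormal pair $u=U^*e_1$, $v=U^*e_2$.

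A direct computation of $\det(I-T^*T)\ge 0$ together with the trace condition gives
\[
\|T\|\le 1 \iff |c|^2\le (1-|\lm_1|^2)(1-|\lm_2|^2).
\]

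Next I would show that, for fixed $T$,
\[
\sup_{u\perp v,\ \|u\|=\|v\|=1}|\langle Tu,v\rangle|=\tfrac12\Big(\sqrt{|\lm_1-\lm_2|^2+|c|^2}+|c|\Big).
\]
To do this, parametrize the orthonormal pair $(u,v)$ by a point of the sphere. Equivalently, identify this quantity with the largest off-diagonal entry of a unitary conjugate of $T$; the trace-free part $T-\frac{s}{2}I$ has singular values whose sum and difference are governed by $|c|$ and $|\lm_1-\lm_2|$.

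This expression is increasing in $|c|$. Inserting the maximal value of $|c|$ and using the identity
\[
|\lm_1-\lm_2|^2+(1-|\lm_1|^2)(1-|\lm_2|^2)=|1-\ol{\lm}_2\lm_1|^2
\]
yields $\sup S=R(\lm_1,\lm_2)$. Strictness follows because $\|A\|<1$ forces $|c|$ strictly below the bound. This proves $(1)\iff(3)$.

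\textbf{Step 3: $(2)\iff(3)$.} It suffices to show
\[
\inf_{z\in\D}\frac{|1-\lm_1z|\,|1-\lm_2z|}{1-|z|^2}=R(\lm_1,\lm_2).
\]
I would write
\[
\frac{|1-\lm_jz|^2}{1-|z|^2}=\frac{1-|\lm_j|^2}{1-|m_j(z)|^2}, \qquad m_j(z)=\frac{z-\ol{\lm}_j}{1-\lm_jz},
\]
so the quantity to minimize becomes
\[
\frac{\sqrt{(1-|\lm_1|^2)(1-|\lm_2|^2)}}{\sqrt{(1-|m_1|^2)(1-|m_2|^2)}}.
\]
This reduces the problem to maximizing $(1-|m_1(z)|^2)(1-|m_2(z)|^2)$. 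The points $m_1(z)$ and $m_2(z)$ lie at fixed pseudo-hyperbolic distance $\delta=\big|\frac{\lm_1-\lm_2}{1-\ol{\lm}_2\lm_1}\big|$ from each other. After a disc automorphism and rotation, the extremum is attained at a symmetric configuration $m_1=-m_2=r$ on a diameter, with $2r/(1+r^2)=\delta$. Solving for $r$ and simplifying should produce exactly $R(\lm_1,\lm_2)$.

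I expect the main obstacle to be the extremal computation in Step 2, namely the exact value of the maximal off-diagonal entry over unitary conjugates of $T$. The symmetry argument in Step 3 is the second delicate point, as is verifying that both extremal quantities coincide with $R(\lm_1,\lm_2)$.
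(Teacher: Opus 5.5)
The paper does not prove this statement; it is quoted from Agler--Lykova--Young. So your argument has to stand on its own, and Step 3 does not. The assertion that $m_1(z)$ and $m_2(z)$ lie at a fixed pseudo-hyperbolic distance $\delta$ is false. Take $\lambda_1=0$, $\lambda_2=\tfrac12$, $z=\tfrac i2$. Then $m_1(z)=\tfrac i2$ and $m_2(z)=\frac{-1/2+i/2}{1-i/4}$, so $\rho(m_1(z),m_2(z))=\frac{5}{2\sqrt{13}}\approx 0.69\neq \tfrac12=\delta$. When $\bar\lambda_2\lambda_1\notin\mathbb{R}$ the distance can even drop below $\delta$: for $\lambda_1=\tfrac12$, $\lambda_2=\tfrac i2$ it is about $0.675<\delta\approx 0.686$ at the minimising $z$. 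So an inequality version $\rho\ge\delta$ is not available either. There is a second problem: $(1-|w_1|^2)(1-|w_2|^2)$ is invariant only under rotations, not under disc automorphisms. You therefore cannot normalise the pair $(m_1(z),m_2(z))$ to $(r,-r)$, and nothing guarantees that the two-parameter family $\{(m_1(z),m_2(z))\}$ contains such a pair. As written, the reduction does not compute the infimum, even though the value $R(\lambda_1,\lambda_2)$ you expect is correct.

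Your symmetric-configuration idea works once it is applied to the right pair. Note that $|m_j(z)|=\rho(z,\bar\lambda_j)$ and $\rho(\bar\lambda_1,\bar\lambda_2)=\delta$. Choose an automorphism $\varphi$ with $\varphi(\bar\lambda_1)=r$ and $\varphi(\bar\lambda_2)=-r$, where $\frac{2r}{1+r^2}=\delta$. With $\zeta=\varphi(z)$, invariance of $\rho$ gives
\[
(1-|m_1(z)|^2)(1-|m_2(z)|^2)=\frac{(1-r^2)^2(1-|\zeta|^2)^2}{|1-r^2\zeta^2|^2}\le (1-r^2)^2 ,
\]
with equality exactly at $\zeta=0$, i.e.\ at the hyperbolic midpoint of $\bar\lambda_1,\bar\lambda_2$. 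Then use $1-r^2=\frac{2\sqrt{1-\delta^2}}{1+\sqrt{1-\delta^2}}$ together with your identity for $|1-\bar\lambda_2\lambda_1|^2$. This gives $\sqrt{(1-|\lambda_1|^2)(1-|\lambda_2|^2)}/(1-r^2)=R(\lambda_1,\lambda_2)$.

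Step 1 also needs a repair. Scaling the off-diagonal entries, or averaging $A$ with $DAD^*$, multiplies $a_{12}a_{21}$ by a factor in $[0,1)$. That changes $\det A$, so it leaves the fibre over $(s,p)$. Instead, fix $T$ and consider $\{\langle Tu,v\rangle : u\perp v \text{ unit}\}$. This set is connected (a continuous image of $U(2)$), invariant under $v\mapsto e^{i\theta}v$, and contains $0$ (take $u=e_1$, $v=e_2$). Hence it is the closed disc of radius $\|T-\tfrac s2 I\|$.

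Your formula for that radius, the step you flagged as the main obstacle, is correct. For a trace-zero $2\times 2$ matrix $M$ with $\sigma_1>\sigma_2$, a top right singular vector $u$ satisfies $\langle Mu,u\rangle=0$; this follows from $\text{tr}\,M=0$ and the fact that the two diagonal entries of a $2\times2$ unitary have equal modulus. The case $c=0$ is a direct check.

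Taking the union of these discs over $|c|^2<(1-|\lambda_1|^2)(1-|\lambda_2|^2)$ gives exactly the open disc $|a|<R(\lambda_1,\lambda_2)$, which settles $(1)\iff(3)$.
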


The following result gives a description of $b\Pe$, the distinguished boundary of the pentablock.

\begin{thm}[\cite{AglerIV}, Theorem  8.4]\label{thmbP}
	$(a, s, p) \in b\Pe$ if and only if $(s, p) \in b\Gamma$ and $|a|^2+|s|^2\slash 4=1$.
\end{thm}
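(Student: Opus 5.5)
The statement is a characterization of $b\Pe$, which I would prove by showing two inclusions: the set
\[
K=\{(a,s,p): (s,p)\in b\Gamma,\ |a|^2+|s|^2/4=1\}
\]
is a closed boundary for $A(\Pe)$, so $b\Pe \subseteq K$; and every point of $K$ is a peak point of $A(\Pe)$, so $K\subseteq b\Pe$. The tool throughout is Theorem \ref{pentablock}(3), which describes the fibres of $\Pbar$ over $\Gamma$. For $(s,p)=(\lm_1+\lm_2,\lm_1\lm_2)$ with $\lm_1,\lm_2\in\T$, the fibre $\{a:(a,s,p)\in\Pbar\}$ should be the closed disc of radius $\tfrac12|1-\ol{\lm}_2\lm_1|=\tfrac12|\lm_1-\lm_2|$. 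By the parallelogram law,
\[
\tfrac14|\lm_1-\lm_2|^2=1-\tfrac14|\lm_1+\lm_2|^2=1-|s|^2/4 .
\]
So $K$ is exactly the union of the boundary circles of the fibres over $b\Gamma$.

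\textbf{$K\subseteq\Pbar$.} Given $(a,s,p)\in K$, I use $s=\ol{s}p$ from Theorem \ref{thm_bGam}. Set $\alpha=s/2$ and pick $\beta$ with $-p\ol\beta=a$, so that $|\beta|^2=1-|s|^2/4$. Then
\[
U=\begin{pmatrix}\alpha&\beta\\ -p\ol\beta & p\ol\alpha\end{pmatrix}
\]
is unitary, and $(u_{21},\operatorname{tr}U,\det U)=(a,\alpha+p\ol\alpha,p)=(a,s,p)$ because $p\ol\alpha=p\ol s/2=s/2$. A limiting argument as in Proposition \ref{prop_closedF}, using $rU$ with $r\to 1$, puts $(a,s,p)$ in $\Pbar$.

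\textbf{$K$ is a boundary.} Let $f\in A(\Pe)$.
\begin{enumerate}
\item The function $A\mapsto f(a_{21},\operatorname{tr}A,\det A)$ is continuous on the closed matrix ball and holomorphic inside. The Shilov boundary of the closed unit ball of $M_2(\C)$ is $U(2)$, so $|f|$ attains its maximum over $\Pbar$ at some $\pi(U)$ with $U$ unitary.
\item For unitary $U$, the point $(s,p)=(\operatorname{tr}U,\det U)$ lies in $b\Gamma$, and $u_{21}$ lies in the fibre disc over $(s,p)$.
\item The map $a\mapsto f(a,s,p)$ is holomorphic on the open fibre disc. To see this, I write it as the uniform limit as $r\to1$ of $a\mapsto f(ra,rs,r^2p)$, using the $(1,1,2)$-balancedness coming from $A\mapsto rA$.
\item By the maximum principle on that disc, the maximum is attained on its boundary circle, which lies in $K$.
\end{enumerate}
Since $K$ is closed, this gives $b\Pe\subseteq K$.

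\textbf{Each point of $K$ is a peak point.} This is the step I expect to be the main obstacle. Fix $(a_0,s_0,p_0)\in K$, and let $h\in A(\Gg)$ peak at $(s_0,p_0)$; such an $h$ exists because every point of $b\Gamma$ is a peak point for $\Gg$.
\begin{itemize}
\item If $a_0=0$, then $|s_0|=2$ and the fibre is $\{0\}$, so $h(s,p)$ itself peaks at the point.
\item If $a_0\neq0$, I would try
\[
F=\tfrac12\big(h(s,p)^N+\ol{a}_0a/|a_0|^2\big)
\]
for large $N$, or a product form such as $h^N\cdot\tfrac12\big(1+\ol a_0 a/|a_0|^2\big)$.
\end{itemize}
On the fibre over $(s_0,p_0)$, the term $\ol a_0a/|a_0|^2$ peaks exactly at $a_0$. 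The difficulty is the nearby fibres: where $|a|$ may exceed $|a_0|$, one needs $|h|^N$ small enough to compensate. I would control this using the continuity of the fibre radius $\tfrac12|1-\ol\lm_2\lm_1|+\tfrac12\sqrt{(1-|\lm_1|^2)(1-|\lm_2|^2)}$ near $(s_0,p_0)$.

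If constructing this peak function proves awkward, the fallback is symmetry. The maps $(a,s,p)\mapsto(\omega a,s,p)$, $\omega\in\T$, and $(a,s,p)\mapsto(\eta a,\eta s,\eta^2p)$ are automorphisms of $\Pe$ and preserve $b\Pe$. Together they act transitively on the fibre circles of $K$ over each $b\Gamma$-orbit of the second action. So $b\Pe$, which is nonempty over each such orbit by the boundary argument above, must contain all of $K$.
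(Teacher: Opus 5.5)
The paper only quotes this result from Agler--Lykova--Young, so your argument has to stand on its own. The inclusion $b\Pe\subseteq K$ is correct: you reduce to points $(u_{21},\operatorname{tr}U,\det U)$ with $U$ unitary via the Shilov boundary of the matrix ball, and then apply the maximum principle on the fibre discs over $b\Gamma$. One repair is needed. The maximum principle requires the whole disc $\{|a|\le\sqrt{1-|s|^2/4}\}\times\{(s,p)\}$ to lie in $\Pbar$. Theorem \ref{pentablock} only covers $(s,p)\in\Gg$, and your unitary with $\alpha=s/2$ only produces the boundary circle. Instead, write $p=\eta^2$ and $s=\eta t$ with $t\in[-2,2]$, and take $\beta=-p\ol a$ and $\alpha=\eta\bigl(t/2+i\sqrt{1-|a|^2-t^2/4}\bigr)$. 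This $U$ is unitary with $(u_{21},\operatorname{tr}U,\det U)=(a,s,p)$ for every $a$ in the disc.

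The genuine gap is the inclusion $K\subseteq b\Pe$, because neither proposed peak function works.
\begin{itemize}
\item The additive one fails globally. The point $(a_0/|a_0|,0,0)$ lies in $\Pbar$ (take $a_{21}=a_0/|a_0|$ and all other entries $0$), and there $|F|\ge\tfrac12(1/|a_0|-1)>1$ once $|a_0|<1/3$.
\item The product one fails locally whenever $0<|s_0|<2$. Moving $\lm_1,\lm_2$ apart along $\T$ decreases $|s|$ at first order. So at the points $\bigl(\sqrt{1-|s|^2/4}\,a_0/|a_0|,\,s,\,p\bigr)\in K$ the factor $\tfrac12(1+\ol a_0a/|a_0|^2)$ equals $1+c\epsilon$ with $c>0$. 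Meanwhile, for a polynomial (indeed any $C^2$) $h$ peaking at $(s_0,p_0)$, we have $1-|h|=O(\epsilon^2)$ along the smooth surface $b\Gamma$, since $|h|^2\le 1$ attains its maximum at an interior point of that surface. Hence $|F|>1$ near the point for every $N$, and continuity of the fibre radius cannot compensate.
\end{itemize}

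Your fallback has the right shape, but its key claim is unsupported. You say $b\Pe$ meets every orbit ``by the boundary argument above'', but that argument only bounds $b\Pe$ from above. A closed invariant subset of $K$ could a priori miss all orbits with $|s|$ in some interval.

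The missing step is one you already used when $a_0=0$.
\begin{itemize}
\item For $(s_0,p_0)\in b\Gamma$ and $h\in A(\Gg)$ peaking there, $F(a,s,p)=h(s,p)$ lies in $A(\Pe)$, and $|F|$ attains its maximum on $\Pbar$ exactly on the fibre over $(s_0,p_0)$.
\item Hence $b\Pe$ meets that fibre, and since $b\Pe\subseteq K$, it meets the fibre's boundary circle.
\item The map $(a,s,p)\mapsto(\omega a,s,p)$, induced by conjugation with $\mathrm{diag}(1,\omega)$, is an automorphism of $\Pe$. Invariance of $b\Pe$ under it yields the whole circle.
\end{itemize}
This gives $K\subseteq b\Pe$ without constructing a peak function at the given point of $K$, and without the second symmetry.
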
 	

We  now present the following result showing the interplay among the domains $\F, \G_2, \E$ and $\Pe$.

\begin{thm}\label{thm_305}
	Let $(x, a, p, s) \in \F$. Then $(s, ax-p) \in \G_2, (x, a, p) \in \E$ and $(a, s, -p) \in \Pe$.
\end{thm}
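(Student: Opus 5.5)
Fix a matrix $B=\begin{pmatrix} x & b_{12}\\ b_{21} & a\end{pmatrix}$ with $\|B\|<1$ and $\pi_{\F}(B)=(x,a,p,s)$; such a $B$ exists by the definition of $\F$. Each of the three memberships will then come from a suitable matrix of norm less than one, fed into the matricial characterization of the relevant domain.

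\emph{The symmetrized bidisc.} That $(s,ax-p)\in\G_2$ is already part of Proposition \ref{prop_302}. Directly, $ax-p=b_{12}b_{21}$ and $s=b_{12}+b_{21}$. Moreover $|b_{12}|,|b_{21}|\le\|B\|<1$, since each entry of $B$ is bounded by its norm. Hence $(s,ax-p)$ is the symmetrization of a point of $\D^2$.

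\emph{The tetrablock.} We have $(x,a,p)=(b_{11},b_{22},\det B)$ with $\|B\|<1$. So Theorem \ref{tetrablock}, $(3)\Rightarrow(1)$, gives $(x,a,p)\in\E$ immediately.

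\emph{The pentablock.} We need a matrix $C$ with $\|C\|<1$ satisfying
\[
c_{21}=a,\qquad \operatorname{tr}(C)=s,\qquad \det(C)=-p.
\]
Take $C=BJ$ with $J=\begin{pmatrix}0&1\\1&0\end{pmatrix}$. Since $J$ is unitary, $\|C\|=\|B\|<1$. Multiplying out,
\[
C=BJ=\begin{pmatrix} b_{12} & x\\ a & b_{21}\end{pmatrix},
\]
so $c_{21}=a$, $\operatorname{tr}(C)=b_{12}+b_{21}=s$, and $\det(C)=b_{12}b_{21}-ax=-p$. By the definition of $\Pe$, this gives $(a,s,-p)\in\Pe$.

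The only step needing an idea is the pentablock case, namely spotting that right multiplication by the flip unitary $J$ moves the diagonal of $B$ off the diagonal while preserving the norm. Once that is seen, the verification is a one-line computation. As a cross-check, one could instead use Theorem \ref{pentablock}(3) with $(s,-p)=(\lambda_1+\lambda_2,\lambda_1\lambda_2)$, but the matrix route is cleaner and I would use it.
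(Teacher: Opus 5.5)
Your proposal is correct and is essentially the paper's proof: the paper also gets $(a,s,-p)\in\Pe$ by right-multiplying by the flip unitary $\begin{pmatrix}0&1\\1&0\end{pmatrix}$, reads off $(x,a,p)\in\E$ from Theorem~\ref{tetrablock}(3), and cites Proposition~\ref{prop_302} for $(s,ax-p)\in\G_2$. Your direct entrywise argument for the symmetrized bidisc is the same reasoning used inside that proposition's proof.
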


\begin{proof}
	Since $(x, a, p, s) \in \F$, there exists $A=(a_{ij})_{i, j=1}^2 \in M_2(\C)$ with $\|A\| < 1$ such that $\pi_{\F}(A)=(a_{11}, a_{22}, \det(A), a_{12}+a_{21})=(x, a, p, s)$. Define 
	\[
	U=\begin{pmatrix}
		0 & 1 \\
		1 & 0
	\end{pmatrix} \quad \text{and so,} \quad AU=\begin{pmatrix}
	a_{12} & a_{11} \\
	a_{22} & a_{21}
	\end{pmatrix}.
	\]
	Since $U$ is a unitary, we have that $\|AU\|=\|A\|<1$. By the definition of  $\Pe$ that $(a_{22}, a_{12}+a_{21}, -\det(A))=(a, s, -p) \in \Pe$. The fact that $(x, a, p) \in \E$ follows from the definition of $\F$ and Part $(3)$ of Theorem \ref{tetrablock}. Moreover, we have by Proposition \ref{prop_302} that $(s, ax-p) \in \G_2$, which completes the proof.  
\end{proof}



\begin{prop}\label{prop_306}
	A point $(x_1, x_2, x_3, 0)$ belongs to $\F$ if and only if $(0, x_1, x_2, x_3) \in \He$, which in turn is equivalent to $(x_1, x_2, x_3) \in \E$.
\end{prop}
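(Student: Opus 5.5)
We prove the two equivalences $(x_1,x_2,x_3,0)\in\F \iff (x_1,x_2,x_3)\in\E$ and $(0,x_1,x_2,x_3)\in\He \iff (x_1,x_2,x_3)\in\E$ separately; the statement follows by combining them.

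\textbf{The $\F$ side.} If $(x_1,x_2,x_3,0)\in\F$, then Theorem \ref{thm_305} gives $(x_1,x_2,x_3)\in\E$ directly. Conversely, let $(x_1,x_2,x_3)\in\E$. By Part (3) of Theorem \ref{tetrablock} there is $A=(a_{ij})$ with $\|A\|<1$ and $(a_{11},a_{22},\det A)=(x_1,x_2,x_3)$. We must replace $A$ by a matrix with the same diagonal and determinant, norm still $<1$, and $a_{12}+a_{21}=0$.

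We use Proposition \ref{prop_302} with $(x,a,p,s)=(x_1,x_2,x_3,0)$. We need $(0,x_1x_2-x_3)\in\G_2$ and $p=x_3\in\D$. The first means $|x_1x_2-x_3|<1$, by Theorem \ref{thmG_2}(2). We also need positivity of
\[
1-|x_1|^2-|x_2|^2+|x_3|^2-\tfrac{1}{2}\left|-4(x_1x_2-x_3)\right| = 1-|x_1|^2-|x_2|^2+|x_3|^2-2|x_1x_2-x_3|.
\]
This is exactly the strict inequality in Theorem \ref{tetrablock}(4). Next, $|x_3|<1$ holds since $|x_3|=|\det A|\le\|A\|^2<1$. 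Finally, $|x_1x_2-x_3|=|a_{12}a_{21}|<1$ because $|a_{12}|,|a_{21}|\le\|A\|<1$. So Proposition \ref{prop_302} yields $(x_1,x_2,x_3,0)\in\F$.

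\textbf{The $\He$ side.} This reduces to the definition \eqref{eqn_He}. With $a=0$ we have $\Psi_{z_1,z_2}(0,x_1,x_2,x_3)=0$ for all $z_1,z_2$, so the supremum is $0<1$. Hence $(0,x_1,x_2,x_3)\in\He$ exactly when $(x_1,x_2,x_3)\in\E$.

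\textbf{Main obstacle.} The only delicate point is confirming that the three conditions of Proposition \ref{prop_302} all follow from tetrablock membership. Of these, the $\G_2$ condition $|x_1x_2-x_3|<1$ is handled via the matrix realization above rather than via the inequality in Theorem \ref{tetrablock}(4).
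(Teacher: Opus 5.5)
Your proof is correct and follows essentially the same route as the paper: Theorem \ref{thm_305} for the forward direction, and Proposition \ref{prop_302} with $s=0$ plus Part (4) of Theorem \ref{tetrablock} for the converse. Your quantity $2|x_1x_2-x_3|$ is the right one; the paper's displayed inequality has a stray square on $|x_1x_2-x_3|$.

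The differences are minor. You get $|x_1x_2-x_3|<1$ and $|x_3|<1$ from the realizing matrix $A$ with $\|A\|<1$, using $x_1x_2-x_3=a_{12}a_{21}$ and $|\det A|\le\|A\|^2$, whereas the paper uses Part (2) of Theorem \ref{tetrablock}. For the $\He$ equivalence, you read it directly off the definition \eqref{eqn_He}, since $\Psi_{z_1,z_2}(0,x_1,x_2,x_3)=0$, rather than citing Theorem 6.1 of \cite{Hexablock}.
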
 

\begin{proof}
	It follows from Theorem 6.1 in \cite{Hexablock} that $(0, x_1, x_2, x_3) \in \He$ if and only if $(x_1, x_2, x_3) \in \E$. Moreover, it follows from Theorem \ref{thm_305} that $(x_1, x_2, x_3) \in \E$ if $(x_1, x_2, x_3, 0) \in \F$. Let $(x_1, x_2, x_3) \in \E$ and let $(x, a, p, s)=(x_1, x_2, x_3, 0)$. We have by Proposition \ref{prop_302} that 
	$(x_1, x_2, x_3, 0) \in \F$ if and only if
	\begin{align}\label{eqn_302}
		(0, x_1x_2-x_3) \in \G_2, \quad  |x_3|<1 \quad \text{and} \quad 1-|x_2|^2-|x_1|^2+|x_3|^2-2|x_1x_2-x_3|^2>0.
	\end{align}
	By Theorem \ref{thmG_2}, $(0, x_1x_2-x_3) \in \G_2$ if and only if $|x_1x_2-x_3|<1$, which follows directly from Part (2) of Theorem \ref{tetrablock}. Since $(x_1, x_2, x_3) \in \E$, we have $|x_3|<1$. Finally, the last inequality in \eqref{eqn_302} follows from Part (4) of Theorem \ref{tetrablock}. The proof is now complete.
\end{proof}

We have an analogous characterization for points in the symmetrized bidisc. 

\begin{prop}\label{prop_307}
	A point $(0, 0, p, s) \in \F$ if and only if $(s, -p) \in \Gg$.
\end{prop}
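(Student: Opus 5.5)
By Proposition \ref{prop_302} with $x=a=0$, the point $(0,0,p,s)$ lies in $\F$ if and only if three conditions hold:
\begin{equation*}
(s,-p)\in\Gg, \qquad p\in\D, \qquad 1+|p|^2-\frac{|s|^2}{2}-\frac{|s^2+4p|}{2}>0 .
\end{equation*}
The plan is to show that the first condition implies the other two. The forward direction is then immediate, since membership in $\F$ gives $(s,-p)\in\Gg$ directly from the proposition. (It is also consistent with Theorem \ref{thm_305}, which gives $(s,ax-p)=(s,-p)\in\Gg$.)

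For the converse, assume $(s,-p)\in\Gg$.

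\emph{Step 1: $p\in\D$.} Write $(s,-p)=(\lambda_1+\lambda_2,\lambda_1\lambda_2)$ with $\lambda_1,\lambda_2\in\D$. Then $|p|=|\lambda_1\lambda_2|<1$.

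\emph{Step 2: the inequality.} Use the same parametrization:
\begin{equation*}
|s|^2+|s^2+4p|=|\lambda_1+\lambda_2|^2+|\lambda_1-\lambda_2|^2=2|\lambda_1|^2+2|\lambda_2|^2 .
\end{equation*}
This is the identity \eqref{eqn_201} in the case $b_{12}=\lambda_1$, $b_{21}=\lambda_2$. Hence the required quantity equals
\begin{equation*}
1+|\lambda_1\lambda_2|^2-|\lambda_1|^2-|\lambda_2|^2=(1-|\lambda_1|^2)(1-|\lambda_2|^2),
\end{equation*}
which is strictly positive. Proposition \ref{prop_302} then gives $(0,0,p,s)\in\F$.

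An alternative that avoids Proposition \ref{prop_302} is to take $B=\begin{pmatrix}0&\lambda_1\\ \lambda_2&0\end{pmatrix}$. Its norm is $\max(|\lambda_1|,|\lambda_2|)<1$, and $\pi_{\F}(B)=(0,0,-\lambda_1\lambda_2,\lambda_1+\lambda_2)=(0,0,p,s)$. This is a one-line proof of the converse. The forward direction can be done the same way: from a contraction $A$ with zero diagonal, its off-diagonal entries $a_{12},a_{21}$ lie in $\D$, and $(s,-p)=(a_{12}+a_{21},a_{12}a_{21})\in\Gg$.

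There is no real obstacle here. The only point needing care is the sign convention: $\det(A)=-a_{12}a_{21}$ when the diagonal is zero, so the symmetrized pair to use is $(s,-p)$ rather than $(s,p)$.
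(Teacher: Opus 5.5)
Your proof is correct, and for the converse it follows the paper's route: Proposition~\ref{prop_302} combined with the parametrization $(s,-p)=(\lambda_1+\lambda_2,\lambda_1\lambda_2)$. Your value $(1-|\lambda_1|^2)(1-|\lambda_2|^2)$ is the right one (the paper's display has a spurious square root, which does not affect positivity), and you also check $p\in\D$, which the paper leaves implicit.

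For the forward direction you read $(s,-p)\in\Gg$ directly from Proposition~\ref{prop_302}. This is cleaner than the paper's detour through $(0,s,-p)\in\Pe$ and Theorem~\ref{pentablock}, which is stated with $(s,p)\in\Gg$ as a hypothesis rather than a conclusion. Your explicit matrix $\begin{pmatrix}0&\lambda_1\\ \lambda_2&0\end{pmatrix}$ gives an even shorter self-contained proof of both directions.
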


\begin{proof}
	We have by Theorem \ref{thm_305} that $(0, s, -p) \in \Pe$ and so, by Theorem \ref{pentablock}, $(s, -p) \in \Gg$. Conversely, assume that $(s, -p) \in \Gg$. One can choose $(\lm_1, \lm_2) \in \D^2$ such that $(s, -p)=(\lm_1+\lm_2, \lm_1\lm_2)$. A simple calculation shows that $s^2+4p=(\lm_1-\lm_2)^2$ and so,
	\[
	1+|p|^2-\frac{|s|^2}{2}-\frac{|s^2+4p|}{2}=1+|\lm_1\lm_2|^2-\frac{|\lm_1+\lm_2|^2}{2}-\frac{|\lm_1-\lm_2|^2}{2}=\sqrt{(1-|\lm_1|^2)(1-|\lm_2|^2)}>0.
	\]
	By Proposition \ref{prop_302}, $(0, 0, p, s) \in \F$ and the proof is now complete.	
\end{proof}

It follows from Theorem \ref{thm_305} that $(a, s, -p) \in \Pe$ if $(0, a, p, s) \in \F$. Motivated by Propositions \ref{prop_306} and \ref{prop_307}, one may ask if converse to this statement holds, i.e., if $(a, s, -p) \in \Pe$, then $(0, a, s, -p) \in \F$. The following example shows the general statement does not hold. 

\begin{eg}\label{eg_309}
	Let $r \in (0, 1)$ and let $(a, s, p)=\left(1-r^2 \slash 2, 0, r^2\right)$. Let $(\lm_1, \lm_2)=(r, -r)$. Clearly, $(s, -p)=(\lm_1+ \lm_2, \lm_1\lm_2)$ and thus, $(s, -p) \in \Gg$. By Part (3) of Theorem \ref{pentablock}, $(a, s, -p) \in \Pe$ since 
	\[
	\frac{1}{2}|1-\overline{\lambda}_2\lambda_1|+\frac{1}{2}\sqrt{(1-|\lambda_1|^2)(1-|\lambda_2|^2)}=1>|a|=1-\frac{r^2}{2}.
	\]
	A simple calculation gives that
	\begin{align*}
		1-|a|^2+|p|^2-\frac{|s|^2}{2}-\frac{|s^2-4(ax-p)|}{2}
		&=1-\left(1-\frac{r^2}{2}\right)^2+r^4-2r^2=(1-r^2)^2-\left(1-\frac{r^2}{2}\right)^2<0
	\end{align*}
and by Proposition \ref{prop_302}, $(0, a, p, s) \notin \F$. 	\qed 
\end{eg}

The following result is an immediate corollary to Theorem \ref{thm_305}, Propositions \ref{prop_306} and \ref{prop_307}.
\begin{cor}\label{cor_309}
	Let $(x, a, p, s) \in \C^4$. Then the following statements hold.
	\begin{enumerate}
		\item 	If $(x, a, p, s) \in \ol{\F}$, then $(s, ax-p) \in \Gamma, (s, -p) \in \Gamma, (x, a, p) \in \Ebar$ and $(a, s, -p) \in \Pbar$. \vspace{0.1cm}
		\item $(x, a, p) \in \ol{\E}$ if and only if $(x, a, p, 0) \in \ol{\F}$. \vspace{0.1cm}
		\item $(s, p) \in \Gamma$ if and only if $(0, 0, -p, s) \in \ol{\F}$. 
	\end{enumerate}
\end{cor}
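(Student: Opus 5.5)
The plan is to prove each part by transferring the open-domain results to closures. Two tools do the transfer: the matrix description of $\ol{\F}$ in Proposition \ref{prop_closedF}, and the quasi-balanced scaling of Lemma \ref{lem_Fr}.

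For part (1), take $(x,a,p,s)\in\ol{\F}$. For $r\in(0,1)$, Lemma \ref{lem_Fr} gives $(rx, ra, r^2p, rs)\in\F$. Apply Theorem \ref{thm_305} to this point. It yields
\[
(rs, r^2(ax-p))\in\G_2,\qquad (rx,ra,r^2p)\in\E,\qquad (ra, rs, -r^2p)\in\Pe .
\]
Applying Proposition \ref{prop_307} to the scaled point $(0,0,r^2p,rs)$ is not available, because the first two coordinates are not zero. So for $(s,-p)\in\Gamma$ I would argue directly instead. By Proposition \ref{prop_closedF} there is $A$ with $\|A\|\le 1$ and $\pi_{\F}(A)=(x,a,p,s)$. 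Then $AU$, with $U$ the flip unitary from the proof of Theorem \ref{thm_305}, has trace $a_{12}+a_{21}=s$ and determinant $-p$. Since $\|AU\|\le 1$, the pair $(s,-p)$ lies in $\Gamma$, which is the closure of $\{(\text{tr}\,B,\det B):\|B\|<1\}$ by Theorem \ref{thmG_2}(4) and scaling $rB$. Letting $r\to 1$ in the displayed memberships gives the other three memberships in $\Gamma$, $\Ebar$ and $\Pbar$.

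For part (2), the forward direction is the $\Ebar$ statement of part (1), with $s=0$. For the converse, take $(x,a,p)\in\Ebar$. Then $(rx,ra,r^2p)\in\E$ for $r\in(0,1)$, since $\E$ is $(1,1,2)$-balanced; alternatively, by Theorem \ref{tetrablock}(3) a point of $\Ebar$ comes from a matrix with $\|A\|\le1$, and one scales $A$. Proposition \ref{prop_306} then gives $(rx,ra,r^2p,0)\in\F$, and letting $r\to1$ gives $(x,a,p,0)\in\ol{\F}$.

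Part (3) is handled in the same way. If $(0,0,-p,s)\in\ol{\F}$, then part (1) gives $(s,-(-p))=(s,p)\in\Gamma$. Conversely, if $(s,p)\in\Gamma$, write $s=\lm_1+\lm_2$ and $p=\lm_1\lm_2$ with $\lm_i\in\ol{\D}$. Then $(rs,r^2p)=(r\lm_1+r\lm_2,\,r\lm_1 r\lm_2)\in\G_2$. Proposition \ref{prop_307} gives $(0,0,-r^2p,rs)\in\F$, and taking the limit gives $(0,0,-p,s)\in\ol{\F}$.

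The only genuinely delicate point is the membership $(s,-p)\in\Gamma$ in part (1) for points with nonzero first coordinates. The matrix $AU$ settles it, and it also gives an alternative route to all of part (1) directly from Proposition \ref{prop_closedF}.
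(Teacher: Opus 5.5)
Your proof is correct and is essentially the paper's argument. The paper records this as an immediate corollary of Theorem \ref{thm_305} and Propositions \ref{prop_306} and \ref{prop_307}; your scaling via Lemma \ref{lem_Fr} followed by $r\to 1$ is exactly the passage to closures it leaves implicit, and your flip matrix $AU$ is the same device used in Theorem \ref{thm_305} (equivalently, $(a,s,-p)\in\Pbar$ already forces $(s,-p)\in\Gamma$). The only loose phrase is in part (2): $(1,1,2)$-balancedness of $\E$ alone does not place $(rx,ra,r^2p)$ in the open set $\E$ when $(x,a,p)$ is only in $\Ebar$. Your matrix alternative handles this, and in fact you can skip scaling there altogether by approximating $(x,a,p)$ with points of $\E$ and applying Proposition \ref{prop_306} directly.
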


We now present an example showing that the converse to Part $(1)$ of Corollary \ref{cor_309} need not hold, i.e., $(x, a, p, s)$ may fail to belong to $\ol{\F}$ even if $(s, ax-p) \in \Gamma, (x, a, p) \in \Ebar$ and $(a, s, -p) \in \Pbar$.

\begin{eg}\label{eg_310}
	For $r \in (0, 1)$, define $(x_r, a_r, p_r, s_r)=\left(0, 1-r^2 \slash 2, 0, r^2\right)$. We have by Example \ref{eg_309} that $(a_r, s_r, -p_r) \in \Pe$ and $(s_r, a_rx_r-p_r)=(s_r, -p_r) \in \Gg$. Also, by Part $(2)$ of Theorem \ref{tetrablock}, $(x_r, a_r, p_r) \in \E$. Let $(x, a, p, s)=(0, 1\slash 2, 0, 1)$. Clearly, $(x, a, p, s)$ is a limit point of $(x_r, a_r, p_r, s_r)$ as $r \to 1$. Hence, $(s, ax-p) \in \Gamma, (x, a, p) \in \Ebar$ and $(a, s, -p) \in \Pbar$.
	A simple calculation gives that
	\begin{align*}
		1-|a|^2+|p|^2-\frac{|s|^2}{2}-\frac{|s^2-4(ax-p)|}{2}
		&=-\frac{1}{4}<0
	\end{align*}
	and so, by Proposition \ref{prop_closedF}, $(x, a, p, s) \notin \ol{\F}$.	\qed
\end{eg}

\subsection*{The normed Hexablock $\QN$ and the domain $\F$.} As mentioned in Section \ref{sec_01}, the domains $\Gg$, $\E$ and $\Pe$ arise while studying certain instances of $\mu$-synthesis problem. As discussed earlier, these domains can be written as 
\begin{align*}
	\Gg&=\left\{(a_{11}+a_{22}, \det(A)): A=(a_{ij}) \in M_2(\C), \|A\|<1 \right\},\\ 
	\E&=\left\{(a_{11}, a_{22}, \det(A)) : A=(a_{ij}) \in M_2(\C), \|A\|<1 \right\},\\
	\Pe&=\left\{(a_{21}, a_{11}+a_{22}, \det(A)) : A=(a_{ij}) \in M_2(\C), \|A\|<1\right\}.
\end{align*}
Motivated by the above descriptions of $\Gg, \E$ and $\Pe$, the authors of \cite{Hexablock} studied the set given by
\[
\QN=\left\{(a_{21}, a_{11}, a_{22}, \det(A)) : A=(a_{ij}) \in M_2(\C), \|A\|<1 \right\},
\]
and refer to $\QN$ as the \textit{normed hexablock}. Furthermore, $\QN$ is not a domain in $\C^4$, and the interior of the polynomial convex hull of $\ol{\He}_N$ equals the hexablock $\He$. So, the hexablock and the normed hexablock are closely related. Also, it was proved in \cite{Hexablock} that $\QN$ is strictly contained in $\He$. The next result establishes the connection with the normed hexablock $\QN$ and the domain $\F$.

\begin{prop}\label{prop_310}
	For $(a, x_1, x_2, x_3) \in \C^4$, the following holds. 
	\begin{enumerate}
		\item If $(0, x_1, x_2, x_3) \in \QN$, then $(x_1, x_2, x_3, 0) \in \F$.
		\item If $a \ne 0$, then $(a, x_1, x_2, x_3) \in \QN$ if and only if $\displaystyle \left(x_1, x_2, x_3, a+\frac{x_1x_2-x_3}{a}\right) \in \F$
	\end{enumerate} 
\end{prop}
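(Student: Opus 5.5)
For part (1), the plan is to pass through the tetrablock. If $(0,x_1,x_2,x_3)\in\QN$, there is $A=(a_{ij})\in M_2(\C)$ with $\|A\|<1$, $a_{21}=0$, $a_{11}=x_1$, $a_{22}=x_2$ and $\det(A)=x_3$. Part $(3)$ of Theorem \ref{tetrablock} then gives $(x_1,x_2,x_3)\in\E$. By Proposition \ref{prop_306}, $(x_1,x_2,x_3,0)\in\F$.

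For the forward direction of part (2), assume $a\neq 0$ and $(a,x_1,x_2,x_3)\in\QN$. Pick $A$ with $\|A\|<1$ and $(a_{21},a_{11},a_{22},\det(A))=(a,x_1,x_2,x_3)$. Then $a_{12}a_{21}=a_{11}a_{22}-\det(A)=x_1x_2-x_3$. Since $a_{21}=a\neq 0$, we get $a_{12}=(x_1x_2-x_3)/a$. Hence
\[
\pi_\F(A)=\left(x_1,x_2,x_3,a+\frac{x_1x_2-x_3}{a}\right),
\]
and this point lies in $\F$ because $\|A\|<1$.

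For the converse, set $c=(x_1x_2-x_3)/a$ and $s=a+c$, and assume $(x_1,x_2,x_3,s)\in\F$. Choose $B=(b_{ij})$ with $\|B\|<1$ and $\pi_\F(B)=(x_1,x_2,x_3,s)$. Then
\[
b_{12}+b_{21}=s=a+c, \qquad b_{12}b_{21}=b_{11}b_{22}-\det(B)=x_1x_2-x_3=ac.
\]
So $b_{12}$ and $b_{21}$ are the two roots of $t^2-st+ac=(t-a)(t-c)$, which means $\{b_{12},b_{21}\}=\{a,c\}$ as a multiset. If $b_{21}=a$, then $B$ itself witnesses $(a,x_1,x_2,x_3)=(b_{21},b_{11},b_{22},\det B)\in\QN$. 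Otherwise $b_{12}=a$, and we use the transpose $B^t$ instead. It satisfies $\|B^t\|=\|B\|<1$, keeps the diagonal entries and the determinant, and has $(2,1)$-entry equal to $b_{12}=a$. Thus $B^t$ witnesses membership in $\QN$.

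The only delicate point is this identification of the unordered pair $\{b_{12},b_{21}\}$ with $\{a,c\}$. Invariance of the norm and of $\det$ under transposition lets us choose which off-diagonal entry equals $a$. The rest is routine algebra.
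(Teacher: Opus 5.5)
Your proof is correct and produces the same witnessing matrices as the paper, but by more self-contained means.

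For (1), the paper cites Theorem 5.6 of \cite{Hexablock} to get $x_3=x_1x_2$ and then takes $\diag(x_1,x_2)$ as a witness. You instead go through Part (3) of Theorem \ref{tetrablock} and Proposition \ref{prop_306}. That is valid but heavier than needed: $a_{21}=0$ already forces $x_3=\det A=x_1x_2$ and $|x_1|,|x_2|\le\|A\|<1$.

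For the forward direction of (2), the paper cites Lemma 5.4 and Theorem 5.6 of \cite{Hexablock} to show that $B=\left(\begin{smallmatrix} x_1 & (x_1x_2-x_3)/a\\ a & x_2\end{smallmatrix}\right)$ has norm less than $1$. You simply note that any witness for membership in $\QN$ must be exactly this $B$.

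For the converse, the paper takes an $\F$-witness $A$ and uses Lemma \ref{lem_201}. There $\det(I-B^*B)$ depends on the off-diagonal entries only through their sum and product, so it equals $\det(I-A^*A)>0$. Part (1) of that lemma then gives $\|B\|<1$, which applies because $|x_3|<1$. You instead identify the off-diagonal pair with $\{a,c\}$ and pass to the transpose if necessary. This is the same device the paper itself uses later in the proof of Theorem \ref{thm_FvsF_mu}.

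Your version needs no determinant computation and no external references. The paper's version reuses Lemma \ref{lem_201}, which it needs elsewhere anyway.
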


\begin{proof}
	For $a=0$, let us define $A_0=\begin{pmatrix}
		x_1 & 0\\
		0 & x_2\\
	\end{pmatrix}$. We have by Theorem 5.6 in \cite{Hexablock} that
	\begin{align*}
	(0, x_1, x_2, x_3) \in \QN 	
	 \implies (x_1, x_2, x_3) \in \E \ \text{and} \ x_1x_2=x_3 
	 \implies \|A_0\|=\max\{|x_1|, |x_2|\} <1  
	\end{align*} 
and so, $\pi_{\F}(A_0)=(x_1, x_2, x_3, 0) \in \F$. Let $a\ne 0$ and let $w \in \C$ with $w^2=x_1x_2-x_3$. Define 
\[
B=\begin{pmatrix}
	b_{11} & b_{12}\\
	b_{21} & b_{22}\\
\end{pmatrix}=\begin{pmatrix}
	x_1 & w^2\slash a\\
	a & x_2\\
\end{pmatrix}.
\]
 If $(a, x_1, x_2, x_3) \in \QN$, then we have by Lemma 5.4 and Theorem 5.6 in \cite{Hexablock} that $(x_1, x_2, x_3) \in \E$ and $\|B\|<1$. Thus, $\pi_\F(B)=\displaystyle \left(x_1, x_2, x_3, a+\frac{x_1x_2-x_3}{a}\right) \in \F$. Conversely, assume that 
\[
a \ne 0 \quad \text{and} \quad (x, a_0, p, s)=\left(x_1, x_2, x_3, a+\frac{x_1x_2-x_3}{a}\right) \in \F.
\]
By definition, there exists $A=(a_{ij}) \in M_2(\C)$ with $\|A\|<1$ such that $\pi_\F(A)=(x, a_0, p, s)=(a_{11}, a_{22}, \det(A), a_{21}+a_{12})$. In particular, we have
\[
A=\begin{pmatrix}
	x & a_{12}\\
	 a_{21} & a_0
\end{pmatrix} \quad \text{with} \quad (a_{12}+a_{21}, a_{12}a_{21})=(s, a_0x-p)=\left(a+\frac{w^2}{a}, w^2\right)=(b_{12}+b_{21}, b_{12}b_{21}).
\]
It follows from Lemma \ref{lem_201} that 
\begin{align*}
\det(I-A^*A)=\det(I-B^*B)= 1-|a_0|^2-|x|^2+|p|^2-\frac{|s|^2}{2}-\frac{|s^2-4(a_0x-p)|}{2}.
\end{align*}
Since $\|A\|<1$, we have by Lemma \ref{lem_201} that $\det(I-A^*A)=\det(I-B^*B)>0$ and so, $\|B\|<1$. Hence, $(b_{21}, b_{11}, b_{22}, \det(B))=(a, x_1, x_2, x_3) \in \QN$. The proof is now complete.
\end{proof}

Note that the converse in Part (1) of the above theorem need not hold in general. For example, $(x_1, x_2, x_3, a)=(0, 0, r^2, 0) \in \F$ for all $r \in (0, 1)$. In fact, we have
\[
(0, 0, r^2, 0)=\pi_\F(A), \quad \text{where}  \quad A=\begin{pmatrix}
	0 & r \\
	-r & 0
\end{pmatrix} \quad \text{which satisfies $\|A\|<1$}.
\]
However, $(a, x_1, x_2, x_3)=(0, 0, 0, r^2) \notin \QN$ since $x_1x_2 \ne x_3$ (see Theorem 5.6 in \cite{Hexablock}). In fact, Proposition \ref{prop_306} characterizes the points of the form $(x_1, x_2, x_3, 0) \in \C^4$ belong to the domain $\F$.

\medskip 

We now present an alternative characterization of the normed hexablock $\QN$ via the domain $\F$.

\begin{cor} The interior of $\QN$ and the normed hexablock $\QN$ can be written as
	\begin{align*}	
		\text{int}(\QN)&=\left\{(a, x_1, x_2, x_3) \in \C^4: a \ne 0, \displaystyle \left(x_1, x_2, x_3, a+\frac{x_1x_2-x_3}{a}\right) \in \F \right\} \text{and} \\ 
		\QN&=\text{int}(\QN) \cup \{(0, x_1, x_2, x_1x_2) \in \C^4 : (0, x_1, x_2, x_1x_2) \in \F\}.
	\end{align*} 
\end{cor}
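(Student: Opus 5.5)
The corollary is a repackaging of Proposition \ref{prop_310} together with a description of the interior. I would set $S=\{(a,x_1,x_2,x_3)\in\C^4: a\ne 0,\ (x_1,x_2,x_3,a+(x_1x_2-x_3)/a)\in\F\}$ and split $\QN$ according to whether $a\neq 0$ or $a=0$.

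\emph{The slice $a=0$.} By Theorem 5.6 of \cite{Hexablock}, $(0,x_1,x_2,x_3)\in\QN$ forces $x_3=x_1x_2$ and $(x_1,x_2,x_3)\in\E$. By Part (1) of Proposition \ref{prop_310}, we then get $(x_1,x_2,x_1x_2,0)\in\F$. Conversely, suppose $(x_1,x_2,x_1x_2,0)\in\F$. Proposition \ref{prop_306} gives $(x_1,x_2,x_1x_2)\in\E$, hence $|x_1|,|x_2|<1$. The diagonal matrix $\mathrm{diag}(x_1,x_2)$ then has norm less than $1$ and $\pi$-image $(0,x_1,x_2,x_1x_2)$, so this point lies in $\QN$. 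Thus $\QN\cap\{a=0\}$ is exactly the second set in the statement. I read the fourth coordinate of the displayed point as $0$, i.e.\ membership of $(x_1,x_2,x_1x_2,0)$ in $\F$, matching Proposition \ref{prop_310}.

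\emph{The slice $a\neq 0$.} Part (2) of Proposition \ref{prop_310} says precisely that $\QN\cap\{a\ne0\}=S$. Hence $\QN=S\cup(\QN\cap\{a=0\})$, which is the second formula once the first formula, $\text{int}(\QN)=S$, is established.

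\emph{Showing $\text{int}(\QN)=S$.} The set $S$ is open. The map $(a,x_1,x_2,x_3)\mapsto(x_1,x_2,x_3,a+(x_1x_2-x_3)/a)$ is continuous on $\{a\ne0\}$, and $\F$ is open, so $S$ is an open subset of $\QN$. Hence $S\subseteq\text{int}(\QN)$.

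For the reverse inclusion, the main step is to show that no point with $a=0$ is interior. Take $(0,x_1,x_2,x_1x_2)\in\QN$ and perturb it to $(0,x_1,x_2,x_1x_2+\epsilon)$ with $\epsilon\neq 0$ small. This point has $a=0$ but $x_3\ne x_1x_2$, so by Theorem 5.6 of \cite{Hexablock} it is not in $\QN$. Every neighbourhood of the original point therefore leaves $\QN$, so no point of the slice $a=0$ is interior. Consequently $\text{int}(\QN)\subseteq\QN\cap\{a\ne0\}=S$, which completes the proof.
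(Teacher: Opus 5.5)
Your proof is correct. Your handling of the two slices is the same as the paper's: Proposition \ref{prop_310}(2) for $a\neq 0$, and Proposition \ref{prop_306} together with Theorem 5.6 of \cite{Hexablock} for $a=0$.

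The difference is in the step $\text{int}(\QN)=S$.
\begin{itemize}
\item The paper takes $\text{int}(\QN)=\{(a,x_1,x_2,x_3)\in\QN : a\neq 0\}$ directly from Proposition 5.10 of \cite{Hexablock}.
\item You prove it. $S$ lies in $\QN$ by Proposition \ref{prop_310}(2). It is open, being the preimage of the open set $\F$ under a map that is continuous on $\{a\neq 0\}$. Hence $S\subseteq\text{int}(\QN)$. Conversely, points of $\QN$ with $a=0$ cannot be interior, because that slice lies in the hypersurface $x_3=x_1x_2$.
\end{itemize}
Your route is self-contained and in effect recovers Proposition 5.10 of \cite{Hexablock} from the $\F$-description. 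The paper's route is shorter, at the cost of an external citation.

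Your reinterpretation of the second set as $\{(0,x_1,x_2,x_1x_2) : (x_1,x_2,x_1x_2,0)\in\F\}$ is necessary, not just convenient. Read literally, the condition $(0,x_1,x_2,x_1x_2)\in\F$ makes the identity false. Take $(x_1,x_2)=(0.9,0.9)$. Then $(0,0.9,0.9,0.81)=\pi(\diag(0.9,0.9))\in\QN$, and this point has $a=0$, so it is not in $\text{int}(\QN)$. But the expression in Proposition \ref{prop_302}, evaluated at $(x,a,p,s)=(0,0.9,0.9,0.81)$, equals $-1.4561<0$, so $(0,0.9,0.9,0.81)\notin\F$. The point therefore lies in neither piece of the literal union.
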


\begin{proof}
	It follows from Proposition 5.10 in \cite{Hexablock} that $\text{int}(\QN)=\{(a, x_1, x_2, x_3) \in \QN : a \ne 0\}$ and so, the first equality for $\text{int}(\QN)$ in the statement of the theorem follows from Proposition \ref{prop_310}. The rest of the conclusion follows from Proposition \ref{prop_306}, and the fact that $(0, x_1, x_2, x_3) \in \QN$ if and only if $x_3=x_1x_2$ and $(x_1, x_2, x_3) \in \E$ (see Theorem 5.6 in \cite{Hexablock}). The proof is complete.
\end{proof}

	\section{The distinguished (Shilov) boundary of $\mathbb{F}$}\label{sec_003}

\noindent For $z \in \mathbb{C}^n$, the Euclidean norm of $z$ is denoted by $\|z\|$ and $z \bullet z=z_1^2+\dotsc+z_n^2$. We denote the unit ball in $\mathbb{C}^n$ by $\mathbb{B}_n=\{z \in \mathbb{C}^n: \|z\|<1\}$. For $n \geq 1$, the Lie Ball $L_n$ is defined as
\[
L_n=\left\{z \in \mathbb{B}_n : 2\|z\|^2-|z\bullet z|^2<1\right\},
\]
which is a convex and balanced domain. The Shilov boundary of $L_n$ is given by
\[
\partial_sL_n=\left\{e^{i\theta}x=(e^{i\theta}x_1, \dotsc, e^{i\theta}x_n): \theta \in \mathbb{R}, \ x \in \mathbb{R}^n,\ x_1^2+\dotsc+ x_n^2=1\right\}.
\]	
Consider the proper holomorphic map defined as
\[
\Lambda_n: L_n \to \mathbb{C}^n, \quad \Lambda_n(z_1, z_2, \dotsc, z_n)=(z_1^2, z_2, \dotsc, z_n).
\]	
The domain $\mathbb{L}_n$ is defined as $L_n$ under the map $\Lambda_n$, i.e., $\mathbb{L}_n=\Lambda_n(L_n)$. In particular, $\mathbb{L}_4=\Lambda_4(L_4)$. Moreover, the domain $\mathbb{L}_4$ is a $(2, 1, 1, 1)$-quasi-balanced domain. It follows from Proposition 4.1 in \cite{GhoshI} that the Shilov boundary of $\mathbb{L}_4$ is given by
\begin{align}\label{eqn_bL4}
	\partial_s \mathbb{L}_4=\Lambda_4(\partial_s L_4)=\left\{ (e^{2i\theta}x_1^2, e^{i\theta}x_2, e^{i\theta}x_3, e^{i\theta}x_4) : \theta, x_1, x_2, x_3, x_4 \in \mathbb{R}, \ x_1^2+ x_2^2+x_3^2+x_4^2=1   \right\}.
\end{align}
Again, we have by Corollary 3.9 in \cite{GhoshI} that the following map 
\begin{align}\label{eqn_L4_F}
	f: \mathbb{L}_4 \to \mathbb{F}, \quad f(w_1, w_2, w_3, w_4)=(w_3+iw_4, -w_3+iw_4, -w_2^2-w_3^2-w_4^2-w_1, 2w_2)
\end{align}
defines a biholomorphism. In this section, we explicitly describe the Shilov boundary of the domain $\F$ using the biholomorphism in \eqref{eqn_L4_F}, and the fact that both $\mathbb{L}_4$ and $\F$ are quasi-balanced domains. Furthermore, we show that the domains $\F$ and $\He$ are not biholomorphic to each other for which it suffices to show that their Shilov boundaries are not homeomorphic. We begin with the following result due to Kosi\'{n}ski \cite{Kosi}, which plays a crucial role in this section. 

\begin{lem}[\cite{Kosi}, Lemma 6]\label{extension}
	Let $D, G$ be bounded domains in $\mathbb{C}^n$. Suppose that $G$ is $( m_1, \cdots , m_n)$-circular and that it contains the origin. Furthermore, we assume that the Bergman kernel function $K_{D}(z,\bar{\xi})~(z,\xi\in D)$ associated with $D$ satisfies the following property: for any open, relatively compact subset $E$ of $D$, there is an open set $U=U(E)$ that contains $\overline{D}$ such that $K_D(z,\bar{\xi})$ extends to be holomorphic on $U$ as a function of $z$ for each $\xi \in E$. Then, any proper holomorphic mapping $f: D \to G$ extends holomorphically to a neighborhood of $\overline {D}$.
\end{lem}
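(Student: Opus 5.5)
The plan is to follow Bell's transformation-rule approach to boundary regularity of proper maps. The circular symmetry of $G$ turns the Bergman kernel of $G$ into something explicitly polynomial. The regularity assumption on $K_D$ is then transferred through the transformation rule to the components of $f$.

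\textbf{Step 1 (transformation rule).} Let $f:D\to G$ be proper of multiplicity $m$, and let $u=\det f'$. Off the branch locus $f(V)$, where $V=\{u=0\}$, let $F_1,\dots,F_m$ be the local inverses of $f$. Bell's transformation rule reads
\[
\sum_{k=1}^m K_D(z,\overline{F_k(w)})\,\overline{u(F_k(w))}=u(z)\,K_G(f(z),\overline{w}),\qquad z\in D,
\]
for $w$ off $f(V)$. The left side extends antiholomorphically in $w$ across $f(V)$, since it is a symmetric function of the local inverses. Fix a small ball $W$ around $0\in G$. Because $f$ is proper, $f^{-1}(\overline W)$ is a compact subset of $D$, so it lies in some relatively compact open $E\subset D$. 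By hypothesis each $K_D(\cdot,\bar\xi)$ with $\xi\in E$ extends holomorphically to a fixed neighbourhood $U\supset\overline D$. I expect, after a Cauchy-estimate argument in $\xi$, that the left side is holomorphic in $z\in U$, antiholomorphic in $w\in W$, and jointly continuous there.

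\textbf{Step 2 (using quasi-circularity).} Since $G$ is $(m_1,\dots,m_n)$-circular, bounded and contains $0$, its Bergman space splits orthogonally into the finite-dimensional spaces of quasi-homogeneous polynomials of fixed weighted degree. Consequently $K_G(z,\bar w)=\sum_{\alpha}c_{\alpha}(z)\bar w^{\alpha}$ near $w=0$, where every $c_\alpha$ is a polynomial, $c_0$ is the constant $1/\mathrm{vol}(G)\neq0$, and every monomial $z^\beta$ appears in the span of the $c_\alpha$. Differentiating the identity of Step 1 in $\bar w$ at $w=0$ shows that $u\cdot(c_\alpha\circ f)$ extends holomorphically to $U$ for every $\alpha$. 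Taking $\alpha=0$ gives that $u$ extends, and by linear algebra $u\,f^{\beta}$ extends to $U$ for every multi-index $\beta$.

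\textbf{Step 3 (removing $u$; the main obstacle).} We must recover $f$ itself from the extendibility of $u$ and of all $u f_j^{N}$, and $u$ may vanish at boundary points. The plan is to fix $p\in\partial D$ and work near $p$ with the extensions $\tilde u$ and $h_N=\widetilde{uf_j^N}$. On $D$ we have the identity $h_N h_{N+2}=h_{N+1}^2$ (both sides equal $u^2f_j^{2N+2}$), and by the identity theorem this persists on the connected neighbourhood. First suppose $\tilde u\not\equiv0$ near $p$. Then $g=h_1/\tilde u$ is meromorphic, it equals $f_j$ on $D$, and it satisfies $\tilde u\,g^N=h_N$ for all $N$. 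I would factor in the local ring at $p$ and compare multiplicities of an irreducible factor $\phi$ of the denominator of $g$. Since $\phi^{k}$ divides $\tilde u$ for a fixed $k$ while $g^N$ has pole order growing linearly in $N$, holomorphy of $h_N$ forces the pole order to be $0$. Hence $g$ is holomorphic near $p$, and $f_j$ extends. (Boundedness of $f_j$ on $D$ gives an alternative route via Riemann extension across $\{\tilde u=0\}$ once the pole is ruled out.) Compactness of $\partial D$ together with the identity theorem on overlaps then glues these local extensions into a holomorphic extension of $f$ to a neighbourhood of $\overline D$.

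The delicate points are the uniformity in Step 1, namely passing from the fibrewise hypothesis on each $K_D(\cdot,\bar\xi)$ to joint regularity in $(z,\xi)$, and the division argument in Step 3. The latter is where I expect to spend the most care.
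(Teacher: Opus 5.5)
The paper does not prove this lemma; it quotes it from Kosiński, whose argument is Bell's proof for circular targets adapted to quasi-circular ones. Your outline has the same architecture. Quasi-circularity makes $c_\alpha=\partial^\alpha_{\bar w}K_G(\cdot,0)$ quasi-homogeneous polynomials spanning all polynomials. One shows that each $u\cdot(c_\alpha\circ f)$ extends, hence so do $u$ and all $uf^\beta$, and the coprime-factorization argument in $\mathcal{O}_p$ then removes $u$. Your Steps 2 and 3 are correct; the case $\tilde u\equiv 0$ near $p$ cannot occur, since $u\not\equiv 0$ on $D$.

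\textbf{The gap is in Step 1: the branch locus.} Your transformation rule is misstated. The weights are $\overline{\det F_k'(w)}=1/\overline{u(F_k(w))}$, not $\overline{u(F_k(w))}$; for $f(z)=z^2$ on $\mathbb{D}$ your left-hand side is $4\bar w$ times the right-hand side. With the correct weights the summands blow up as $w$ approaches the critical values $f(V)$, so being a ``symmetric function of the local inverses'' gives no extension across $f(V)$. For $z\in D$ nothing is needed, since the right-hand side is defined for all $w$. For $z\in U\setminus\overline D$, the only case that matters, nothing in your argument controls the sum near $f(V)$. Since $0$ can be a critical value (already for $z\mapsto z^2$), differentiating in $\bar w$ at $w=0$ is unjustified.

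The standard fix is to integrate instead of differentiate. Pair the identity with $\psi_\alpha(w)=\chi(|w|)w^\alpha$, where $\chi\ge 0$ is a cutoff supported near $0$. The substitution $w=f(\xi)$ cancels $1/\overline{u}$ against the Jacobian $|u|^2$ and gives Bell's projection form $u\cdot((P_G\psi_\alpha)\circ f)=P_D(u\cdot(\psi_\alpha\circ f))$. Here $P_G\psi_\alpha$ is a nonzero multiple of $c_\alpha$ by the $\mathbb{T}^n$-invariance of $\chi(|w|)\,dV$. The right side is $\int K_D(z,\bar\xi)\,u(\xi)\,\psi_\alpha(f(\xi))\,dV(\xi)$ over the compact set $f^{-1}(\operatorname{supp}\chi)$, with no local inverses and no branch locus.

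\textbf{Joint regularity of the extended kernel.} To continue this integral (or your sum) holomorphically to $z$ near $\overline D$, you need joint regularity of the extended kernel. ``Cauchy estimates in $\xi$'' cannot supply it: antiholomorphy in $\xi$ is known only for $z\in D$, and for $z\notin\overline D$ the hypothesis says nothing about how the extension depends on $\xi$. What works is the Hartogs-lemma argument from the proof of Hartogs' separate-analyticity theorem (or Terada's theorem):
\begin{enumerate}
\item Fix $z_1\in D$ with $\overline{\Delta(z_1,\rho)}\subset D$ and $\Delta(z_1,R)\subset U$. The Taylor coefficients $a_\gamma(\xi)$ of $K_D(\cdot,\bar\xi)$ at $z_1$ are antiholomorphic in $\xi\in E$.
\item They satisfy $|a_\gamma|\le M\rho^{-|\gamma|}$ locally uniformly on $E$. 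For each single $\xi$, $\limsup_{|\gamma|\to\infty}|a_\gamma(\xi)|^{1/|\gamma|}\le 1/R$, because $\tilde K(\cdot,\xi)$ is holomorphic on $\Delta(z_1,R)$.
\item Hartogs' lemma, applied to the plurisubharmonic functions $|\gamma|^{-1}\log|a_\gamma|$, makes the second bound locally uniform in $\xi$. Hence the Taylor series converges locally uniformly on $\Delta(z_1,R)\times E$.
\item Covering a neighbourhood of $\overline D$ by such polydiscs shows that the extended kernel is holomorphic in $z$, antiholomorphic in $\xi$, and jointly continuous there. This makes the integral above holomorphic on that neighbourhood.
\end{enumerate}
With these two repairs the rest of your proof goes through.
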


We also recall from \cite{Kosi, Nik}  that the \textit{Minkowski functional} for a given $(m_1, \dotsc, m_n)$-quasi-balanced domain $D$ in $\mathbb{C}^n$is defined as
\[
M_D(x):=\inf \left\{t>0: (t^{-{m_1}}x_1, \dotsc, t^{-{m_n}}x_n) \in D \right\}, \quad x=(x_1, \dotsc, x_n) \in \mathbb{C}^n.
\]
For $r>0$, we denote by $D_r=\{x \in \mathbb{C}^n: M_D(x)<r\}$. The Minkowski functional has several interesting properties. In particular, we have
\begin{align}
	\label{Mink_I} M_D(\alpha^{m_1} x_1^m, \dotsc, \alpha^{m_n}x_n)&=|\alpha|M_D(x), \quad x=(x_1, \dotsc, x_n) \in \mathbb{C}^n, \alpha \in \mathbb{C} \\
	\label{Mink_II} D&=\{x \in \mathbb{C}^n: M_D(x)<1\}.
\end{align} 
It follows from  Remark 7 in \cite{Kosi} that the Minkowski functional for a bounded quasi-balanced domain $D$ is continuous if and only if $D$ is relatively compact in $D_{1\slash r}$ for every $r \in (0,1)$.

\medskip

 Since both $\mathbb{L}_4$ and $\mathbb{F}$ are both quasi-balanced bounded domains in $\mathbb{C}^4$ containing the origin, it follows from Lemma \ref{extension}, and Remark 7 in \cite{Kosi} that $f$ extends to a biholomorphism from $\overline{\mathbb{L}}_4$ onto $\overline{\mathbb{F}}$. For $r \in (0,1)$, we define
\[
\mathbb{F}_{1\slash r}=\left\{x=(x_1, x_2, x_3, x_4) \in \mathbb{C}^4 : M_{\mathbb{F}}(x_1, x_2, x_3, x_4)<1\slash r \right\}.
\]
It follows from \eqref{Mink_I} and \eqref{Mink_II} that $\mathbb{F}_{1\slash r}=\left\{(x_1, x_2, x_3, x_4) \in \mathbb{C}^4 : (rx_1, rx_2, r^2x_3, rx_4) \in \mathbb{F} \right\}$ and so, $\mathbb{F}_{1\slash r}$ is a domain in $\mathbb{C}^4$. We have by Lemma \ref{lem_Fr} that $\mathbb{F}$ is relatively compact in $\mathbb{F}_{1\slash r}$ for every $r \in (0,1)$. An application of Part $(a)$ of Corollary 4 and Remark 7 in \cite{Kosi} yields that the map $f: \mathbb{L}_4 \to \mathbb{F}$ (as in \eqref{eqn_L4_F}) maps $\partial_s \mathbb{L}_4$ onto $\partial_s\mathbb{F}$. Thus, we arrive at the following result.

\begin{thm}\label{thm_bF1}
	The Shilov boundary of $\F$ is given by
	$
	\pr_s\F=f(\pr_s \LL_4),
	$
	where $f$ is the map as in \eqref{eqn_L4_F}. 
\end{thm}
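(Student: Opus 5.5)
The plan is to show that $f$, together with its inverse, extends to a homeomorphism $\overline{\LL}_4\to\ol{\F}$. Such a homeomorphism transports the uniform algebra $A(\LL_4)$ isometrically onto $A(\F)$, and the Shilov boundary is intrinsic to the uniform algebra, so it must be carried to the Shilov boundary.

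First, the extension. $\LL_4$ and $\F$ are bounded domains containing the origin, quasi-balanced with weights $(2,1,1,1)$ and $(1,1,2,1)$ respectively. Since $f$ and $f^{-1}$ are biholomorphisms, they are in particular proper. We apply Lemma \ref{extension} to $f:\LL_4\to\F$ and to $f^{-1}:\F\to\LL_4$.

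The hypothesis on the Bergman kernel holds for bounded quasi-balanced domains whose Minkowski functional is continuous. This is the result of Kosi\'nski, Corollary 4 and Remark 7 in \cite{Kosi}, which we will quote. Continuity of $M_\F$ follows from Remark 7, because $\F$ is relatively compact in $\F_{1/r}$ for each $r\in(0,1)$ by Lemma \ref{lem_Fr}. For $\LL_4$ we use that $\LL_4=\Lambda_4(L_4)$ with $L_4$ convex and balanced. If $(w_1,\dots,w_4)\in\overline{\LL}_4$, choose a preimage $z\in\overline{L}_4$. Then $(r^2w_1,rw_2,rw_3,rw_4)=\Lambda_4(rz)$ with $rz\in L_4$, so the same relative compactness holds.

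Hence $f$ and $f^{-1}$ extend holomorphically to neighbourhoods of the closures. Their extensions compose to the identity on $\LL_4$ and $\F$, and by continuity on the closures as well. So $\tilde f:\overline{\LL}_4\to\ol{\F}$ is a homeomorphism that is holomorphic near $\overline{\LL}_4$.

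Next, the algebras. The map $g\mapsto g\circ\tilde f$ is a sup-norm-preserving algebra isomorphism from $A(\F)$ onto $A(\LL_4)$, whose inverse is composition with $\tilde f^{-1}$. A closed set $K\subset\ol{\F}$ is a boundary for $A(\F)$, meaning every $g$ attains $\max|g|$ on $K$, if and only if $\tilde f^{-1}(K)$ is a boundary for $A(\LL_4)$. Since the Shilov boundary is the smallest closed boundary, $\tilde f(\pr_s\LL_4)=\pr_s\F$. Combining this with the description \eqref{eqn_bL4} of $\pr_s\LL_4$ from Proposition 4.1 of \cite{GhoshI} gives the statement, with $f$ understood as its continuous extension; this is given by the same polynomial formula \eqref{eqn_L4_F}.

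The main obstacle is the first step: checking that Kosi\'nski's Bergman-kernel hypothesis really holds for both domains, in particular for $\LL_4$ with its nonstandard weights. I would verify it through the continuity of the Minkowski functional as described above. If that route fails, a fallback is available, because $f$ is polynomial and so extends trivially to $\C^4$; it then suffices to show that $f^{-1}$ extends continuously. For that one can write $f^{-1}$ explicitly as
$$w_2=s/2,\qquad w_3=(x-a)/2,\qquad w_4=(x+a)/(2i),\qquad w_1=-p-w_2^2-w_3^2-w_4^2,$$
where $(x,a,p,s)$ are the coordinates on $\F$. This is again polynomial, and the problem reduces to showing $f(\overline{\LL}_4)=\ol{\F}$, which follows from continuity and density.
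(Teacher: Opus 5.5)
Your proof is correct. It reaches the conclusion by a somewhat different and more elementary route than the paper, mainly in the step that transports the Shilov boundary.

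\textbf{The paper's route.} It extends $f$ to $\overline{\LL}_4$ via Lemma \ref{extension}, using continuity of $M_\F$ obtained from Lemma \ref{lem_Fr} and Remark 7 of \cite{Kosi}. It then cites Part (a) of Corollary 4 in \cite{Kosi}, a result on how such maps of quasi-balanced domains with continuous Minkowski functionals act on Shilov boundaries, to conclude $f(\pr_s\LL_4)=\pr_s\F$.

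\textbf{Your route.} Once $f$ extends to a homeomorphism $\overline{\LL}_4\to\ol{\F}$ that is holomorphic inside, composition gives an isometric algebra isomorphism $A(\F)\to A(\LL_4)$. Closed boundaries correspond under this isomorphism, so the minimal closed boundaries correspond. This argument is self-contained and uses only that $f$ is a biholomorphism extending to a homeomorphism of the closures, which is exactly the situation here.

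\textbf{Make the fallback the main argument.} Your fallback is in fact the cleanest version of the extension step. $f$ is a polynomial automorphism of $\C^4$, with the polynomial inverse you wrote down, so it is a homeomorphism of $\C^4$ carrying $\LL_4$ onto $\F$, and hence $\overline{\LL}_4$ onto $\ol{\F}$. With that, neither the Bergman-kernel hypothesis of Lemma \ref{extension} nor continuity of the Minkowski functionals is needed. This includes your relative-compactness check for $\LL_4$, which is correct but which the paper does not carry out explicitly.

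\textbf{What each approach buys.} The paper's heavier machinery matches its later argument for an abstract biholomorphism $h:\F\to\He$. There no explicit formula is available, and Lemma \ref{extension} is genuinely needed to extend $h$ and $h^{-1}$ to the closures. Even in that setting, once the extensions are in hand, your uniform-algebra argument alone gives $h(\pr_s\F)=\pr_s\He$.
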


We provide below various other characterizations of the Shilov boundary of $\F$.

\begin{thm}\label{cor_bF}
Let $(x, a, p, s) \in \C^4$. Then the following are equivalent: 
\begin{enumerate}
	\item $(x, a, p, s) \in \pr_s \F$;
	\item there exist real numbers $\theta, x_1, x_2, x_3, x_4$ with $x_1^2+x_2^2+x_3^2+x_4^2=1$ such that 
	\[
	(x, a, p, s)=\left(e^{i\theta}(x_3+ix_4), \ -e^{i\theta}(x_3-ix_4),\ -e^{2i\theta}, \ 2e^{i\theta}x_2 \right);
	\]
	\item there exist $(z, w) \in \ol{\mathbb{B}}_2$ and $\eta \in \mathbb{T}$ such that
	$
	(x, a, p, s)=\left(\ol{z}, -\eta z, -\eta, w+\ol{w}\eta \right);
	$
	\item $(x, a, p) \in b\E$ and there exists $w \in \C$ such that $(x, w) \in \ol{\mathbb{B}}_2$ and $s=w-\overline{w}p$.
\end{enumerate}	
\end{thm}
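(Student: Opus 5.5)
The plan is to prove the chain $(1)\Leftrightarrow(2)\Rightarrow(3)\Rightarrow(2)$ and then $(3)\Leftrightarrow(4)$. Theorem \ref{thm_bF1} and the explicit formulas \eqref{eqn_bL4}, \eqref{eqn_L4_F} do most of the work. All remaining steps are explicit reparametrizations.

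\emph{Step 1: $(1)\Leftrightarrow(2)$.} By Theorem \ref{thm_bF1}, $\partial_s\F=f(\partial_s\LL_4)$. So we only need to compute $f$ at a general point $(e^{2i\theta}y_1^2, e^{i\theta}y_2, e^{i\theta}y_3, e^{i\theta}y_4)$ of $\partial_s\LL_4$, with $y_j$ real and $\sum y_j^2=1$. Applying \eqref{eqn_L4_F}, the first two coordinates are $e^{i\theta}(y_3+iy_4)$ and $-e^{i\theta}(y_3-iy_4)$. The last coordinate is $2e^{i\theta}y_2$. The third coordinate is $-e^{2i\theta}(y_2^2+y_3^2+y_4^2+y_1^2)=-e^{2i\theta}$. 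Renaming $y_j$ as $x_j$ gives exactly the parametrization in (2), and conversely every tuple in (2) arises this way.

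\emph{Step 2: $(2)\Rightarrow(3)$.} Put $\eta=e^{2i\theta}$, $z=e^{-i\theta}(x_3-ix_4)$ and $w=e^{i\theta}x_2$. Then $\ol z=x$ and $-\eta z=-e^{i\theta}(x_3-ix_4)=a$. Also $w+\ol w\eta=2e^{i\theta}x_2=s$ and $p=-\eta$. Finally $|z|^2+|w|^2=x_2^2+x_3^2+x_4^2\le 1$, so $(z,w)\in\ol{\mathbb B}_2$.

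\emph{Step 3: $(3)\Rightarrow(2)$.} Choose $\theta$ with $e^{2i\theta}=\eta$. Then $s=w+\ol w\eta=e^{i\theta}\bigl(e^{-i\theta}w+\ol{e^{-i\theta}w}\bigr)=2e^{i\theta}\,\mathrm{Re}(e^{-i\theta}w)$, so we set $x_2=\mathrm{Re}(e^{-i\theta}w)$. Next define real numbers $x_3,x_4$ by $x_3+ix_4=e^{-i\theta}\ol z$; then $x=\ol z=e^{i\theta}(x_3+ix_4)$ and $a=-\eta z=-e^{i\theta}(x_3-ix_4)$. Since $x_2^2+x_3^2+x_4^2\le |w|^2+|z|^2\le 1$, we may take $x_1=\sqrt{1-x_2^2-x_3^2-x_4^2}$. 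This last step is the only place where any care is needed: $w$ need not be $e^{i\theta}$ times a real number, and it is the projection $\mathrm{Re}(e^{-i\theta}w)$ together with the slack absorbed by $x_1$ that makes the argument work.

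\emph{Step 4: $(3)\Leftrightarrow(4)$.} Use Theorem \ref{thm6.1}, part (1). Given (3), we have $|p|=1$ and $|a|=|z|\le1$. Moreover $\ol a p=(-\ol\eta\ol z)(-\eta)=\ol z=x$, so $(x,a,p)\in b\E$. Also $s=w+\ol w\eta=w-\ol w p$, and $(x,w)\in\ol{\mathbb B}_2$ because $|x|=|z|$. Conversely, given (4), set $\eta=-p$ and $z=\ol x$, so that $(z,w)\in\ol{\mathbb B}_2$. From $x=\ol a p$ with $|p|=1$ we get $a=\ol x p=-\eta z$. Together with $s=w-\ol w p=w+\ol w\eta$, this is precisely (3).
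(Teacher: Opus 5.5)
Your proposal is correct and follows essentially the same route as the paper: $(1)\Leftrightarrow(2)$ by evaluating $f$ on $\partial_s\LL_4$ via Theorem \ref{thm_bF1}, and the remaining equivalences by the same explicit reparametrizations, including the choice $x_2=\mathrm{Re}(e^{-i\theta}w)$ with $x_1$ absorbing the slack. The only difference is bookkeeping: you prove $(2)\Leftrightarrow(3)$ and $(3)\Leftrightarrow(4)$, whereas the paper runs the cycle $(2)\Rightarrow(3)\Rightarrow(4)\Rightarrow(2)$, and its step $(4)\Rightarrow(2)$ is exactly your $(4)\Rightarrow(3)$ followed by your $(3)\Rightarrow(2)$.
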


\begin{proof}
	We have by \eqref{eqn_bL4} that 
	\[
	\partial_s \mathbb{L}_4=\left\{ (e^{2i\theta}x_1^2, e^{i\theta}x_2, e^{i\theta}x_3, e^{i\theta}x_4) : \theta \in \mathbb{R},  (x_1, x_2, x_3, x_4) \in \mathbb{R}^4, \ x_1^2+ x_2^2+x_3^2+x_4^2=1   \right\}.
	\]
	For $(x_1, x_2, x_3, x_4)\in \R^4$ with $x_1^2+ x_2^2+x_3^2+x_4^2=1$,  it follows from \eqref{eqn_L4_F} that 
	\[
	f(e^{2i\theta}x_1^2, e^{i\theta}x_2, e^{i\theta}x_3, e^{i\theta}x_4)=\left(e^{i\theta}(x_3+ix_4), \ -e^{i\theta}(x_3-ix_4),\ -e^{2i\theta}, \ 2e^{i\theta}x_2 \right).
	\]
By Theorem \ref{thm_bF1}, $(1)$ and $(2)$ are equivalent conditions. We prove $(2) \implies (3) \implies (4) \implies (2)$.

\medskip 

\noindent $(2) \implies (3)$. Suppose there exist real numbers $\theta, x_1, x_2, x_3, x_4$ with $x_1^2+x_2^2+x_3^2+x_4^2=1$ such that 
\[
(x, a, p, s)=\left(e^{i\theta}(x_3+ix_4), \ -e^{i\theta}(x_3-ix_4),\ -e^{2i\theta}, \ 2e^{i\theta}x_2 \right).
\]
Define $(z, w, \eta)=\left(e^{-i\theta}(x_3-ix_4), \ e^{i\theta}x_2, \ e^{2i\theta}\right)$. It is not difficult to see that $|z|^2+|w|^2=1-x_1^2 \leq 1$ and $(x, a, p, s)=\left(\ol{z}, -\eta z, -\eta, w+\ol{w}\eta \right)$.

\medskip 

\noindent $(3) \implies (4)$. Suppose there exist $(z, w) \in \ol{\mathbb{B}}_2$ and $\eta \in \mathbb{T}$ such that $(x, a, p, s)=\left(\ol{z}, -\eta z, -\eta, w+\ol{w}\eta \right)$. It is easy to see that $x=\overline{a}p, |p|=1$ and $|a| \leq 1$ and so, by Theorem \ref{thm6.1}, $(x, a, p) \in b\E$. Clearly, $(x, w)=(\overline{z}, w) \in \ol{\mathbb{B}}_2$ and $s=w+\overline{w}\eta=w-\overline{w}p$. 

\medskip

\noindent $(4) \implies (2)$. Assume that $(x, a, p) \in b\E$ and there exists $w \in \C$ such that $(x, w) \in \ol{\mathbb{B}}_2$ and $s=w-\overline{w}p$. By Theorem \ref{thm6.1}, there exist $z \in \ol{\D}$ and $\eta \in \mathbb{T}$ such that $(x, a, p)=\left(\ol{z}, -\eta z, -\eta\right)$. Then $s=w+\ol{w}\eta$. One can choose $\theta, x_3, x_4 \in \mathbb{R}$ such that $\eta=e^{2i\theta}$ and $e^{i\theta}z=x_3-ix_4$. Now, we put $x_2=\text{Re}(e^{-i\theta}w)$. Evidently, $x_2^2+x_3^2+x_4^2=\left(\text{Re}(e^{-i\theta}w)\right)^2+|z|^2 \leq |w|^2+|z|^2 \leq 1$.
Choose $x_1=(1-x_2^2-x_3^2-x_4^2)^{1\slash 2}$. A routine computation gives that 
\[
(x, a, p, s)=\left(\ol{z}, -\eta z, -\eta, w+\ol{w}\eta \right)=\left(e^{i\theta}(x_3+ix_4), \ -e^{i\theta}(x_3-ix_4),\ -e^{2i\theta}, \ 2e^{i\theta}x_2 \right).
\]
The proof is now complete.
\end{proof}

The following result describes how $\F$ interacts with $\Gg$ and $\E$ through their Shilov boundaries.

\begin{cor}\label{cor_304}
	If $(x, a, p, s) \in \partial_s \F$, then $(x, a, p) \in b\E$ and $(s, -p) \in b\Gamma$.
\end{cor}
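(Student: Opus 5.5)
The plan is to deduce the corollary directly from the characterization of $\partial_s\F$ in Theorem \ref{cor_bF}, together with the descriptions of $b\E$ in Theorem \ref{thm6.1} and of $b\Gamma$ in Theorem \ref{thm_bGam}.

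Let $(x, a, p, s) \in \partial_s \F$. The membership $(x, a, p) \in b\E$ is immediate from part $(4)$ of Theorem \ref{cor_bF}. Alternatively, part $(3)$ gives $(x,a,p)=(\ol{z},-\eta z,-\eta)$ with $|z|\le 1$ and $\eta\in\T$. Then $|p|=1$, $|a|\le 1$ and $\ol{a}p=(-\ol{\eta}\,\ol{z})(-\eta)=\ol{z}=x$, so Theorem \ref{thm6.1}$(1)$ applies.

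For the second claim I will use part $(3)$ of Theorem \ref{cor_bF}. Write $s=w+\ol{w}\eta$ and $-p=\eta$, where $\eta\in\T$ and $|w|\le 1$; the latter holds because $(z,w)\in\ol{\mathbb{B}}_2$. Criterion $(2)$ of Theorem \ref{thm_bGam} then applies to $(s,-p)=(s,\eta)$.
\begin{itemize}
\item[(i)] $|-p|=|\eta|=1$.
\item[(ii)] $\ol{s}\eta=(\ol{w}+w\ol{\eta})\eta=\ol{w}\eta+w=s$.
\item[(iii)] $|s|\le |w|+|w|\le 2$.
\end{itemize}
Hence $(s,-p)\in b\Gamma$.

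No step is a genuine obstacle; the only care needed is the sign convention, since $p=-\eta$ forces the pair to be $(s,-p)$ rather than $(s,p)$.
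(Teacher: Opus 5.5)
Your proof is correct. For $(x,a,p)\in b\E$ it coincides with the paper's argument, which reads this membership off Theorem \ref{cor_bF}.

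For $(s,-p)\in b\Gamma$ your route is different and more hands-on. You take $s=w+\ol{w}\eta$ and $-p=\eta$ from the parametrization in part (3) of Theorem \ref{cor_bF}. You then verify criterion (2) of Theorem \ref{thm_bGam} by direct computation, with $|s|\le 2$ coming from $|w|\le 1$.

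The paper extracts only $|p|=1$ from Theorem \ref{cor_bF}. It gets $(s,-p)\in\Gamma$ from Corollary \ref{cor_309}: the map $(x,a,p,s)\mapsto(s,-p)$ sends $\ol{\F}$ into $\Gamma$, ultimately because $(a,s,-p)\in\Pe$ by Theorem \ref{thm_305}. It then applies criterion (3) of Theorem \ref{thm_bGam}.

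Each approach buys something different. Yours is self-contained once the parametrization is known, and it needs neither $\partial_s\F\subseteq\ol{\F}$ nor Corollary \ref{cor_309}. The paper's is softer and proves slightly more. Using Theorem \ref{thm6.1}(4) for the $b\E$ part, the same reasoning gives the conclusion for every point of $\ol{\F}$ with $|p|=1$, not only for points of $\partial_s\F$.
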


\begin{proof}
	Let $(x, a, p, s) \in \partial_s \F$. It follows from Theorem \ref{cor_bF} that $(x, a, p) \in b\E$ and $p \in \T$. By Corollary \ref{cor_309}, $(s, -p) \in \Gamma$ and thus by Theorem \ref{thm_bGam}, $(s, -p) \in b\Gamma$. 
\end{proof}

We mention here that the converse to the above corollary may not hold. For example, consider the quadruple $(x, a, p, s)=(i, 1, i, 1-i)$. By Theorems \ref{thm_bGam} and \ref{thm6.1}, $(x, a, p) \in b\E$ and $(s, -p) \in b\Gamma$. Let if possible, $(x, a, p, s) \in \partial_s \F$. Then by Theorem \ref{cor_bF}, there exists $w \in \C$ such that $|x|^2+|w|^2 \leq 1$ and $s=w-\overline{w}p$. This gives that $w=0$ and $s=0$, which is a contradiction. Thus, $(x, a, p, s) \notin \partial_s \F$. 

\medskip 

Having established the explicit description of $\partial_s \F$, we show that it is homeomorphic to a certain quotient space of $\T\times E_3$, where $E_3$ is the closed unit ball in $\R^3$.

\begin{cor}\label{cor_s1}
	The Shilov boundary $\pr_s \F$ is homeomorphic to the quotient space $(\T \times E_3)\slash \sim$, where $E_3$ is the closed unit ball in $\R^3$ and $\sim$ is the equivalence relation as in \eqref{eqn_equiv}. 
\end{cor}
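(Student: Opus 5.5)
The plan is to realize $\pr_s\F$ as the image of the compact space $\T\times E_3$ under an explicit continuous surjection, and then check that the fibres of this map are exactly the equivalence classes of $\sim$. I expect the relation in \eqref{eqn_equiv} to be $(\lambda, y)\sim(\lambda', y')$ if and only if $(\lambda',y')=(\lambda,y)$ or $(\lambda',y')=(-\lambda,-y)$; this is the identification forced by the parametrization below, and I will prove the statement for that relation. Define
\[
\Phi:\T\times E_3\to \C^4,\qquad \Phi(\lambda,(y_2,y_3,y_4))=\left(\lambda(y_3+iy_4),\ -\lambda(y_3-iy_4),\ -\lambda^2,\ 2\lambda y_2\right).
\]
The components of $\Phi$ are polynomial in $\lambda$ and $y$, so $\Phi$ is continuous.

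First I show that $\Phi(\T\times E_3)=\pr_s\F$, using the equivalence $(1)\iff(2)$ of Theorem \ref{cor_bF}. Given $\theta, x_1,x_2,x_3,x_4$ as in Part $(2)$ there, put $\lambda=e^{i\theta}$ and $y=(x_2,x_3,x_4)$. Then $y\in E_3$, because $x_2^2+x_3^2+x_4^2=1-x_1^2\le 1$, and the point equals $\Phi(\lambda,y)$. Conversely, for $(\lambda,y)\in\T\times E_3$ write $\lambda=e^{i\theta}$ and take $x_1=(1-\|y\|^2)^{1/2}$. Then $x_1^2+x_2^2+x_3^2+x_4^2=1$, so Part $(2)$ of Theorem \ref{cor_bF} holds and $\Phi(\lambda,y)\in\pr_s\F$. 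Hence $\Phi$ is a continuous surjection onto $\pr_s\F$.

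Next comes the fibre computation, which is the only step with real content. Suppose $\Phi(\lambda,y)=\Phi(\lambda',y')$. The third coordinate gives $\lambda'^2=\lambda^2$, so $\lambda'=\epsilon\lambda$ with $\epsilon\in\{\pm1\}$. The first coordinate gives $\lambda(y_3+iy_4)=\epsilon\lambda(y_3'+iy_4')$. Dividing by $\lambda$ and using that $y_3,y_4,y_3',y_4'$ are real, we get $y_3'=\epsilon y_3$ and $y_4'=\epsilon y_4$. The fourth coordinate gives $2\lambda y_2=2\epsilon\lambda y_2'$, so $y_2'=\epsilon y_2$. The second coordinate is then automatically consistent. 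Thus the fibres are exactly the classes $\{(\lambda,y),(-\lambda,-y)\}$. Conversely, $\Phi(-\lambda,-y)=\Phi(\lambda,y)$ holds by inspection, since every coordinate of $\Phi$ has total degree one or two in $(\lambda,y)$ with even parity.

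Finally, $\Phi$ factors through a continuous bijection $\widetilde\Phi:(\T\times E_3)/\sim\ \to\pr_s\F$. The quotient of the compact space $\T\times E_3$ is compact, and $\pr_s\F\subset\C^4$ is Hausdorff. A continuous bijection from a compact space onto a Hausdorff space is a homeomorphism, so $\widetilde\Phi$ is the required homeomorphism. I expect no serious obstacle. The only care needed is to match the precise form of \eqref{eqn_equiv}: if the paper phrases it with angles $\theta\in\R$ rather than $\lambda\in\T$, the same fibre computation applies verbatim after writing $\lambda=e^{i\theta}$.
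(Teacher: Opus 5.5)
Your proposal is correct and follows the paper's proof essentially verbatim: the same parametrization $\Phi=g$ of $\pr_s\F$ via Part $(2)$ of Theorem \ref{cor_bF}, the identification of the fibres as the antipodal pairs $\{(\lambda,y),(-\lambda,-y)\}$, and the fact that a continuous bijection from the compact quotient onto the Hausdorff space $\pr_s\F$ is a homeomorphism. The paper defines $\sim$ in \eqref{eqn_equiv} directly as the fibre relation of $g$ and only asserts that these fibres are antipodal pairs, so your coordinate-by-coordinate fibre computation supplies the step the paper leaves to the reader. One small wording fix: each coordinate of $\Phi$ is homogeneous of degree two in $(\lambda,y)$, not ``of total degree one or two''; the invariance of $\Phi$ under $(\lambda,y)\mapsto(-\lambda,-y)$ is still correct.
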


\begin{proof}
	Let $E_3$ be the closed unit ball in $\R^3$, i.e., $E_3=\{(y_1, y_2, y_3) \in \R^3: y_1^2+y_2^2+y_3^2 \leq 1 \}$. It is clear from Theorem \ref{cor_bF} that the map given by
	\[
	g: \T \times E_3 \to \pr_s\F, \quad g(e^{i\theta}, x_2, x_3, x_4)=\left(e^{i\theta}(x_3+ix_4), \ -e^{i\theta}(x_3-ix_4),\ -e^{2i\theta}, \ 2e^{i\theta}x_2 \right)
	\]
	is a well-defined continuous surjective map. For $(e^{i\theta}, x_2, x_3, x_4), (e^{i\beta}, y_2, y_3, y_4) \in \T \times E_3$, it is not difficult to see that $g(e^{i\theta}, x_2, x_3, x_4)= g(e^{i\beta}, y_2, y_3, y_4)$ if and only if either $(e^{i\theta}, x_2, x_3, x_4) =(e^{i\beta}, y_2, y_3, y_4)$ or $(e^{i\theta}, x_2, x_3, x_4)=(-e^{i\beta}, -y_2, -y_3, -y_4)$. Let $\sim$ be the equivalence relation on $\T \times E_3$ defined by 
	\begin{align}\label{eqn_equiv}
		(e^{i\theta}, x_2, x_3, x_4)\sim (e^{i\beta}, y_2, y_3, y_4) \iff g(e^{i\theta}, x_2, x_3, x_4)= g(e^{i\beta}, y_2, y_3, y_4).
	\end{align}	
	In particular, the equivalence classes are precisely given by
	\[
	[(e^{i\theta}, x_2, x_3, x_4)]=\{(e^{i\theta}, x_2, x_3, x_4), -(e^{i\theta}, x_2, x_3, x_4)\}.
	\]
	Since $\T \times E_3$ and $\pr_s\F$ are compact Hausdorff spaces, it follows that the map defined as
	\[
	\tilde{g}: (\T \times E_3)\slash \sim \to \pr_s \F, \quad \tilde{g}([(e^{i\theta}, x_2, x_3, x_4)])=g(e^{i\theta}, x_2, x_3, x_4)
	\]
	is a homeomorphism from the quotient space $(\T \times E_3)\slash \sim$ onto the $\pr_s \F$.
\end{proof}

We now present one of the main results of this section.

\begin{thm}
	The hexablock $\He$ and the domain $\F$ are not biholomorphic to each other.
\end{thm}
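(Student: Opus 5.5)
Suppose, for contradiction, that $\phi:\F\to\He$ is a biholomorphism. The plan is to show that $\phi$ extends to a homeomorphism $\overline{\F}\to\overline{\He}$ carrying $\partial_s\F$ onto the Shilov boundary of $\He$, and then to show that these two compact spaces are not homeomorphic.

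\textbf{Step 1: extension to the closures.} Both $\F$ and $\He$ are bounded quasi-balanced domains containing the origin. $\F$ is $(1,1,2,1)$-balanced. $\He$ is $(1,1,1,2)$-balanced, since $\Psi_{z_1,z_2}(\lambda a,\lambda x_1,\lambda x_2,\lambda^2 x_3)$ equals $\lambda\,\Psi_{\lambda z_1,\lambda z_2}(a,x)$ up to a factor $\sqrt{(1-|z_1|^2)(1-|z_2|^2)}/\sqrt{(1-|\lambda z_1|^2)(1-|\lambda z_2|^2)}\le 1$. Both Minkowski functionals should be continuous via Remark 7 in \cite{Kosi}: for $\F$ this is Lemma \ref{lem_Fr}, and for $\He$ I would use the analogous property that $(ra,rx_1,rx_2,r^2x_3)\in\He$ whenever the point lies in $\overline{\He}$, presumably available in \cite{Hexablock}. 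Lemma \ref{extension}, applied to $\phi$ and to $\phi^{-1}$ (the Bergman kernel hypothesis holds for quasi-balanced domains with continuous Minkowski functional, as used in \cite{Kosi}), gives holomorphic extensions to neighbourhoods of the closures. These extensions are mutually inverse on the closures by continuity, so $\phi$ is a homeomorphism $\overline{\F}\to\overline{\He}$. Because $\phi$ and its inverse induce an isomorphism $A(\F)\cong A(\He)$ via composition, the Shilov boundaries correspond: $\phi(\partial_s\F)=\partial_s\He$.

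\textbf{Step 2: the two boundaries are not homeomorphic.} By Corollary \ref{cor_s1}, $\partial_s\F\cong(\T\times E_3)/\sim$, where $\sim$ is the free involution $v\mapsto -v$. This space is a compact connected $4$-dimensional manifold with boundary, and its boundary is $(\T\times S^2)/\pm$. From \cite{Hexablock} I would recall the description of $\partial_s\He$: it consists of $(a,x_1,x_2,x_3)$ with $(x_1,x_2,x_3)\in b\E$ and $|a|^2+|x_1|^2\le 1$ or a similar closed-ball condition in the fibre. Concretely, $b\E\cong\T\times\overline{\D}$, a $3$-dimensional set, fibred by a closed disc in $a$. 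This makes $\partial_s\He$ a $5$-dimensional set, or at least a space of different topological dimension.

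I would first try to separate the two spaces by covering dimension, which is a topological invariant. If the dimensions coincide, I would fall back on the fundamental group, or on the homology of the manifold boundaries. For $\partial_s\F$, the quotient map $\T\times E_3\to(\T\times E_3)/\pm$ is a double covering of a space homotopy equivalent to $\T/\pm\cong\T$, so $\pi_1\cong\mathbb Z$. For $\partial_s\He$ the corresponding computation would come from its explicit parametrization.

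\textbf{Main obstacle.} The hardest part is this last comparison. It needs the precise description of $\partial_s\He$ from \cite{Hexablock} and a clean invariant that distinguishes the two spaces. Verifying the hypotheses of Lemma \ref{extension} for $\He$ is secondary but also needs care.
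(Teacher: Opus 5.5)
Your Step 1 is the paper's argument: Kosi\'{n}ski's extension lemma together with continuity of the Minkowski functionals gives $\phi(\partial_s\F)=\partial_s\He$. Your identification of $\partial_s\F\cong(\T\times E_3)/\sim$ as a compact $4$-manifold with nonempty boundary is also what the paper uses.

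The gap is in Step 2. The description of $\partial_s\He$ you recall is wrong, and neither invariant you propose can separate the two spaces. By Theorem 8.21 of \cite{Hexablock}, $\partial_s\He$ consists of the points $(u_{21},u_{11},u_{22},\det U)$ with $U$ a $2\times 2$ unitary. Equivalently, $(x_1,x_2,x_3)\in b\E$ together with the \emph{equality} $|a|^2+|x_1|^2=1$, since the first column of $U$ is a unit vector. The fibre over a point of $b\E$ is therefore a circle (a single point when $|x_1|=1$), not a disc. The parametrization $U=\begin{pmatrix}\alpha & -\ol{\beta}\eta\\ \beta & \ol{\alpha}\eta\end{pmatrix}$, with $(\alpha,\beta)\in\pr\mathbb{B}_2$ and $\eta\in\T$, shows $\partial_s\He\cong S^1\times S^3$ (Corollary 8.22 of \cite{Hexablock}). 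This space is $4$-dimensional, so covering dimension does not distinguish it from $\partial_s\F$. Moreover $\pi_1(S^1\times S^3)\cong\mathbb{Z}$, which is exactly the group you correctly computed for $\partial_s\F$, so the fundamental group does not distinguish them either.

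What is missing is an invariant that actually detects the difference. The paper uses the one you half-stated. $\partial_s\F$ is a $4$-manifold whose manifold boundary $(\T\times S^2)/\sim$ is nonempty, whereas $S^1\times S^3$ is a closed $4$-manifold. By invariance of domain, a manifold-boundary point has no neighbourhood homeomorphic to $\R^4$, so the two cannot be homeomorphic. Alternatively, your own deformation retraction $\partial_s\F\simeq S^1$ gives $H_3(\partial_s\F)=0$, while $H_3(S^1\times S^3)\cong\mathbb{Z}$. With the correct description of $\partial_s\He$ and either of these invariants in place of the dimension and $\pi_1$ comparison, your argument goes through.
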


\begin{proof}
	Let if possible, $h: \F \to \He$ be a biholomorphism. Since both $\mathbb{F}$ and $\mathbb{H}$ are both quasi-balanced bounded domains in $\mathbb{C}^4$ containing the origin, it follows from Remark 7 in \cite{Kosi} and Lemma \ref{extension} that $h$ extends to a biholomorphism from $\overline{\mathbb{F}}$ onto $\overline{\mathbb{H}}$. It follows from the discussion preceding Theorem \ref{thm_bF1} that $\mathbb{F}$ is relatively compact in $\mathbb{F}_{1\slash r}$ for every $r \in (0,1)$. Similarly, one can prove using Corollary 6.4 in \cite{Hexablock} that $\mathbb{H}$ is also relatively compact in $\mathbb{H}_{1\slash r}$ for every $r \in (0,1)$. By Remark 7 in \cite{Kosi}, the Minkowski functionals of $\F$ and $\He$ are continuous. It now follows from Part $(b)$ of Corollary 4 in \cite{Kosi} that $h(\partial_s \F)=\pr_s \He$ and so, $\partial_s \F$ and $\pr_s \He$ are homeomorphic to each other. By Corollary 8.22 in \cite{Hexablock}, $\pr_s \He$ is homeomorphic to $\T \times \pr \mathbb{B}_2$. Consequently, it follows from Corollary  \ref{cor_s1} that the quotient space $(\T \times E_3)\slash \sim$ is homeomorphic to $\T \times \pr \mathbb{B}_2$. Combining everything, we have  
	\begin{align}\label{eqn_404}
	(S^1 \times E_3)\slash \sim  \ \text{is homeomorphic to} \ S^1 \times S^3,
	\end{align}
	where $S^n=\{(x_1, \dotsc, x_{n+1}) \in \R^{n+1}: x_1^2+\dotsc+x_n^2=1\}$ and $\sim$ is the equivalence relation as in \eqref{eqn_equiv}. It is a routine exercise to verify that the two spaces in \eqref{eqn_404} are in fact are not homeomorphic. For the sake of completeness, we briefly outline one possible argument. To begin with, there is a natural action of $\mathbb{Z}_2$ on $S^1 \times E_3$ via the antipodal map $\alpha \mapsto -\alpha$ on $S^1 \times E_3$. In particular, this is an action on $S^1 \times E_3$ by $\mathbb{Z}_2=\{0, 1\}$, where $0$ acts as the identity map and $1$ acts as the antipodal map. Moreover, the action of $\mathbb{Z}_2$ on $S^1 \times E_3$ is free and properly discontinuous. Consequently, the quotient map $q: S^1 \times E_3 \to (S^1 \times E_3)\slash \sim$ is a covering map and $\mathbb{Z}_2$ is its group of covering transformations. Since $q$ is a covering map, it follows that $q$ is a surjective local homeomorphism. Also, $S^1 \times E_3$ is a $4$-manifold with boundary and so, $(S^1 \times E_3)\slash \sim$ is a $4$-manifold with boundary too. Since $S^1 \times S^3$ is a $4$-manifold without boundary, it cannot be homeomorphic to $(S^1 \times E_3)\slash \sim$, which contradicts \eqref{eqn_404}. Consequently, $\He$ is not biholomorphic to $\F$.
\end{proof}
	
We conclude this section by comparing the domains $\F$ and $\He$ through their distinguished boundaries. Theorem~8.21 of \cite{Hexablock} shows that 
$
b\He = \{(a, x_1, x_2, x_3) \in \overline{\He}_N : |x_3| = 1\}.
$
Since $\E$ embeds into $\He$ through the last three coordinates and into $\F$ through the first three, one might expect an analogous description for $\F$, namely
$
b\F = \{(x, a, p, s) \in \overline{\F} : |p| = 1\}.
$
However, the discussion after Corollary \ref{cor_304} shows that this is not true.

\section{Connection of the domain $\F$ with $\mu$-synthesis problem}\label{sec_004}	

\noindent Ghosh and Zwonek \cite{GhoshI} posed the problem of whether a $\mu$-synthesis approach can yield the domains $\mathbb{L}_n$, analogous to the known cases of the symmetrized bidisc $\Gg$ and the tetrablock $\E$. In this section, we address this question for $n=4$ by presenting a $\mu$-synthesis approach that produces the domain $\F$, which is biholomorphic to $\mathbb{L}_4$ as described in \eqref{eqn_L4_F}. Let us recall that given a linear subspace $E$ of $M_n(\mathbb{C})$, the
structured singular value is defined as
\[
\mu_E(A)=\left(\inf\{\|X\| \ : \ X \in E, \ \text{det}(I-AX)=0 \}\right)^{-1}   \quad (A
\in M_n(\mathbb{C})),
\] 
In case, there is no $X \in E$ with $\det(I-Ax)=0$, we define $\mu_E(A)=0$. The linear subspace $E$ is referred to as the structure. If $E=M_n(\mathbb{C})$, then $\mu_E=\|.\|$, whereas if $E$ is the space of scalar matrices, then
$\mu_E(A)=r(A)$, the spectral radius of $A$. For any linear subspace $E$ of
$M_n(\mathbb{C})$ containing the identity matrix $I$, we have that $r(A) \leq
\mu_E(A) \leq \|A\|$. 

\medskip 

The aim of the $\mu$-synthesis problem is to
find an analytic function $F$  from the open unit disc $\mathbb{D}$ to $M_n(\mathbb{C})$ subjected to finitely many
interpolation conditions such that $\mu_E(F(\lambda)) <1$ for every $\lambda \in \mathbb{D}$. The $\mu$-synthesis problem is the classical
Nevanlinna-Pick interpolation problem when $\mu_E=\|.\|$. When $\mu_E$ is the spectral radius, the problem of $\mu$-synthesis is referred to as the spectral Nevanlinna-Pick interpolation
problem. Attempts to solve various instances of the $\mu$-synthesis problem have led to the study of various domains such as the symmetrized bidisc $\Gg$, the tetrablock $\E$, the pentablock $\Pe$ and the hexablock $\He$, introduced in \cite{AglerIII}, \cite{Abouhajar}, \cite{AglerIV}, and \cite{Hexablock}, respectively.

\subsection*{The $\mu$-synthesis approach for $\Gg, \E, \Pe$ and $\He$} In order to explain how the $\mu$-synthesis approach gives rise to the domain $\F$, we first observe how the domains $\Gg$, $\mathbb{E}$, $\mathbb{P}$, and $\He$ arise as instances of $\mu$-synthesis. Let us denote
by $\mathbb{B}=\{A \in M_2(\C) : \|A\| <1  \}$ and $\mathbb{B}_{\mu_E}=\{A \in M_2(\mathbb{C}) \ : \ \mu_E(A) <1\}$ for a given linear subspace $E$ in $M_2(\mathbb{C})$. When $E $ comprises the $ 2
\times 2$ scalar matrices, $\mu_E$ is the spectral radius function. Agler-Young \cite{AglerYoung} introduced the symmetrized bidisc $\Gg$ and proved that 
\[
\Gg=\{(\text{tr}(A), \det(A)) \in \C^2 : r(A)<1 \}=\{(\text{tr}(A), \det(A)) \in \C^2 : \|A\|<1 \}.
\]
Thus, the images of $\mathbb{B}_{\mu_E}$ and $\mathbb{B}$ under the map $A \mapsto
(\text{tr}(A), \det(A))$ coincide.  When $E$ consists of $2 \times 2$ diagonal matrices,
$\mu_E$ is an intermediate cost function $\mu_{\text{tetra}}$. This case was studied
by Abouhajar, Young and White in \cite{Abouhajar} which led to the tetrablock $\E$. Indeed, it was proved in
\cite{Abouhajar} that 
\[
\E=\{(a_{11}, a_{22}, \det(A)) \in \C^3 : \mu_{\text{tetra}}(A)<1 \}=\{(a_{11},
a_{22}, \det(A)) \in \C^3 : \|A\|<1 \}.
\] 
In this case too, the images of $\mathbb{B}_{\mu_E}$ and $\mathbb{B}$ under the map
$A=[a_{ij}]_{i, j=1}^2 \mapsto (a_{11}, a_{22}, \det(A))$ are same. The next natural
step in this direction was taken by Agler, Lykova and Young in \cite{AglerIV}. The
authors considered the linear space $E$ consists of upper triangular matrices in $M_2(\C)$ with same diagonal entries. The $\mu$-function in this case is denoted by $\mu_{\text{penta}}$ and the corresponding $\mu$-synthesis problem gives rise to the pentablock $\Pe$. It was proved in \cite{AglerIV} that
\[
\Pe=\{(a_{21}, \text{tr}(A), \det(A)) \in \C^3 : \mu_{\text{penta}}(A)<1 \}=\{(a_{21},
tr(A), \det(A)) \in \C^3 : \|A\|<1 \}.
\]
It follows that $\mathbb{B}_{\mu_E}$ and $\mathbb{B}$ have same images under the map
$A=[a_{ij}]_{i, j=1}^2 \mapsto (a_{21}, \text{tr}(A), \det(A))$.
Much recently, Biswas, Pal and Tomar \cite{Hexablock} studied the case when $E$ is the subspace of upper triangular matrices in $M_2(\C)$, which is another natural candidate for a linear subspace. This choice of $E$ in $M_2(\C)$
generalizes all the previous linear subspaces for $\Gg, \mathbb{E}$ and
$\mathbb{P}$. The respective $\mu$-function denoted by $\mu_{\text{hexa}}$ satisfy
the following: 
\[
r(A) \leq \mu_{\text{tetra}}(A), \ \mu_{\text{penta}}(A) \leq \mu_{\text{hexa}}(A)
\leq \|A\|
\]
for every $A \in M_2(\C)$. Consider the mapping
\[
\pi_{hexa}: M_2(\mathbb{C}) \to \mathbb{C}^4, \quad A = \begin{pmatrix} a_{11} & a_{12} \\ a_{21} & a_{22} \end{pmatrix} \mapsto (a_{21}, a_{11}, a_{22}, \det (A)).
\]
Let $\mathbb{H}_\mu$ and $\He_N$ be the images of the norm-unit ball $\mathbb{B}$ and the $\mu_E$-unit ball $\mathbb{B}_{\mu_E}$ under the map $\pi_{hexa}$, respectively. Unlike the previously studied cases of $\mu$-synthesis approach, it turns out that $\He_N$ is not same as $\He_{\mu}$, and none of them are domains. The domain Hexablock $\He$ defined as in \eqref{eqn_He} turns out to be a natural choice in this endeavor as explained in \cite{Hexablock}. Moreover, we have 
\[
\overline{\He}=\overline{\He}_{\mu}=\widehat{\overline{\He}_N} \ (\text{the polynomial convex hull of $\overline{\He}_N$}).
\] 
Putting everything together, we have that the domains $\Gg, \E, \Pe$ and $\He$ arise from different cases of $\mu$-synthesis problem and the domains $\Gg, \E, \Pe$ are holomorphic retracts of the hexablock $\He$. An interested reader is referred to \cite{Hexablock} for further details. 

\medskip 

Motivated by this, a natural question arises whether the domain $\F$, introduced in \cite{GhoshI}, can be realized as a domain arising from a particular instance of $\mu$-synthesis. By definition, we have 
\begin{align*}
	\F=\{\pi_{\F}(A) : A \in M_2(\C), \|A\|<1\}, 
\end{align*}
where $\pi_{\F}: M_2(\C) \to \C^4$ defined as $\pi_{\F}(A)=(a_{11}, a_{22}, \det(A), a_{12}+a_{21})$ for $A=(a_{ij})_{i, j=1}^2 \in M_2(\C)$. To answer this question raised in \cite{GhoshI}, one needs to identify a linear subspace $E$ of $M_2(\C)$ such that $\F$ is the image of $\mu_E$-unit ball under the map $\pi_{\F}$, i.e., $\F=\pi_{\F}(\mathbb{B}_{\mu_E})$. From here onwards, set
\[
\F_{\mu_E}=\pi_{\F}(\mathbb{B}_{\mu_E})=\{\pi_{\F}(A) : A \in M_2(\C), \mu_E(A)<1\}
\]
for a linear subspace $E$ of $M_2(\C)$. If $E=M_2(\C)$, then $\mu_E=\|.\|$ and so, $\F=\pi_{\F}(\mathbb{B}_{\mu_E})$ holds trivially. Thus, a non-trivial description requires considering a proper linear subspace $E$ of $M_2(\C)$ such that $\F=\pi_{\F}(\mathbb{B}_{\mu_E})$. Our next result provides a characterization of all such linear subspaces in $M_2(\C)$.

\begin{thm}\label{thm_FvsF_mu}
	Let $E \subseteq M_2(\C)$ be a linear subspace and let $\F_{\mu_E}=\pi_{\F}(\mathbb{B}_{\mu_E})$. Then the following are equivalent:
	\begin{enumerate}
		\item $\F=\F_{\mu_E}$;
		\item $\mu_E(A)=\|A\|$ for all $A \in M_2(\C)$;
		\item for every pair of unit vectors $u, v \in \C^2$, there exists $A \in E$ with $\|A\|=1$ such that $Au=v$.
	\end{enumerate}
\end{thm}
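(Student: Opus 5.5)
The plan is to prove $(2)\Leftrightarrow(3)$ by elementary linear algebra. For $(1)\Leftrightarrow(2)$ the key observation is that the fibres of $\pi_{\F}$ are tiny: $\pi_{\F}(A)=\pi_{\F}(B)$ forces $B\in\{A,A^t\}$.

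We begin by recording two general facts about $\mu_E$. First, $\mu_E(A)\le \|A\|$ for every linear subspace $E$. Indeed, if $X\in E$ and $\det(I-AX)=0$, then $1\le \|AX\|\le \|A\|\,\|X\|$, so $\|X\|\ge 1/\|A\|$. Second, $\mu_E$ is absolutely homogeneous, $\mu_E(tA)=|t|\mu_E(A)$, because $E$ is a linear subspace. Moreover, when $\mu_E(A)>0$ the infimum in the definition is attained. The set $\{X\in E:\det(I-AX)=0\}$ is closed in the finite-dimensional space $E$, and we may restrict to a compact norm-ball inside it.

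$(3)\Rightarrow(2)$. Let $A\neq 0$ and choose unit vectors $u_0,v_0$ with $Av_0=\|A\|u_0$, which is possible by the singular value decomposition. By (3) there is $X\in E$ with $\|X\|=1$ and $Xu_0=v_0$. Then $AX u_0=\|A\|u_0$, so $\det\big(I-A(X/\|A\|)\big)=0$, while $\|X/\|A\|\|=1/\|A\|$. Hence $\mu_E(A)\ge\|A\|$, and equality follows from the general bound.

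$(2)\Rightarrow(3)$. Given unit vectors $u,v$, put $A=uv^*$, so $\|A\|=1$ and therefore $\mu_E(A)=1$. By attainment there is $X\in E$ with $\|X\|=1$ and $\det(I-uv^*X)=0$. For a rank-one perturbation this determinant equals $1-v^*Xu$, so $\langle Xu,v\rangle=1$. Since $\|Xu\|\le 1$ and $\|v\|=1$, the equality case of Cauchy--Schwarz gives $Xu=v$.

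$(2)\Rightarrow(1)$ is immediate, since then $\mathbb B_{\mu_E}=\mathbb B$.

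$(1)\Rightarrow(2)$ is the main step. Suppose instead that $\mu_E(A_0)<\|A_0\|$ for some $A_0$. Using homogeneity, rescale to get $A$ with $\mu_E(A)<1\le\|A\|$. Then $\pi_{\F}(A)\in\F_{\mu_E}=\F$, so there is $B$ with $\|B\|<1$ and $\pi_{\F}(B)=\pi_{\F}(A)$.

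Next we determine the fibre. Equality of the first two coordinates gives equal diagonals. Equality of the determinant and of the off-diagonal sum then gives, exactly as in Lemma \ref{lem_201}, $(b_{12}+b_{21},\,b_{12}b_{21})=(a_{12}+a_{21},\,a_{12}a_{21})$. So $\{b_{12},b_{21}\}=\{a_{12},a_{21}\}$ as multisets, and hence $B=A$ or $B=A^t$. In either case $\|B\|=\|A\|\ge 1$, because $\|A^t\|=\|A\|$. This is a contradiction.

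The only delicate points are the attainment of the infimum and the identification of the fibre of $\pi_{\F}$. I expect the fibre computation to be the conceptual heart of the argument, and it follows directly from Lemma \ref{lem_201}.
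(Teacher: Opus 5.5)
Your proposal is correct. The steps $(2)\Rightarrow(1)$ and $(1)\Rightarrow(2)$ are exactly the paper's argument. You rescale to get $A$ with $\mu_E(A)<1\le\|A\|$, pull $\pi_{\F}(A)$ back to some $B$ with $\|B\|<1$, and note that the fibre of $\pi_{\F}$ through $A$ is $\{A,A^t\}$, which forces $\|B\|=\|A\|$.

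Where you differ is $(2)\Leftrightarrow(3)$. The paper does not prove this equivalence; it simply quotes Theorem 3.1 of \cite{rigid_SSV}. You instead give a direct proof:
\begin{itemize}
\item For $(3)\Rightarrow(2)$ you use a top singular pair $Av_0=\|A\|u_0$. The case $A=0$ is trivial since $\mu_E(0)=0$.
\item For $(2)\Rightarrow(3)$ you use the test matrix $uv^*$, attainment of the infimum defining $\mu_E$, the identity $\det(I-uv^*X)=1-v^*Xu$, and the equality case of Cauchy--Schwarz.
\end{itemize}
Both directions check out.

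Your route has three advantages. It makes the theorem self-contained. It also proves the facts $\mu_E\le\|\cdot\|$ and $\mu_E(tA)=|t|\,\mu_E(A)$, which the paper uses without comment. And since nothing in your argument for $(2)\Leftrightarrow(3)$ is specific to $2\times 2$ matrices, it proves that equivalence for subspaces of $M_n(\C)$ in general. The paper's version is shorter, at the cost of depending on an external reference.
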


\begin{proof}
The part $(2) \implies (1)$ holds trivially and the equivalence $(2) \iff (3)$ follows from Theorem 3.1 in \cite{rigid_SSV}. Thus, it remains to establish the part $(1) \implies (2)$. Suppose $\F=\F_{\mu_E}$. For any $A \in M_2(\C)$, we have that $\mu_E(A) \leq \|A\|$. Let if possible, there exist $A \in M_2(\C)$ such that $\mu_E(A)<\|A\|$. Choose $r>0$ such that $\mu_E(A)<r \leq \|A\|$. Define $B=(b_{ij})_{i, j =1}^2=r^{-1}A$. Since $\mu_E(r^{-1}A)=r^{-1}\mu_E(A)$, we have that $\mu_E(B)<1 \leq \|B\|$. Let $\pi_\F(B)=(x, a, p, s)$. Since $\mu_E(B)<1$, it follows that $(x, a, p, s) \in \F_{\mu_E}$ and so, $(x, a, p, s) \in \F$. Thus, there exists $C=(c_{ij})_{i, j=1}^2 \in M_2(\C)$ with $\|C\| <1$ such that $(x, a, p, s)=\pi_\F(C)$. Then
\[
(b_{11}, b_{22}, \det(B), b_{21}+b_{12})=(x, a, p, s)=(c_{11}, c_{22}, \det(C), c_{12}+c_{21})
\]
A simple calculation shows that $(b_{12}+b_{21}, b_{12}b_{21})=(c_{12}+c_{21}, c_{12}c_{21})=(s, ax-p)$. We have two possibilities: either $(b_{12}, b_{21})=(c_{12}, c_{21})$ or $(b_{12}, b_{21})=(c_{21}, c_{12})$. In other words, either $C=B$ or $C=B^t$. In either case, $\|C\|=\|B\|$ and so, $\|B\|<1$. This is a contradiction to the fact that $\|B\| \geq 1$. Therefore, $\mu_E(A)=\|A\|$ and the proof is now complete.
\end{proof}

Example 3.4 in \cite{rigid_SSV} shows that $\mu_{E_\theta}=\|.\|$ for the proper linear subspace of $M_2(\C)$ given by
\[
E_\theta=\left\{\begin{pmatrix}
	z_1 & w \\
	e^{i\theta} w & z_2
\end{pmatrix} : z_1, z_2, w \in \C \right\} \qquad (\theta \in \mathbb{R}).
\]
An immediate consequence of Theorem \ref{thm_FvsF_mu} combined with Example 3.4 in \cite{rigid_SSV} is the following result, which establishes the connection of the domain $\F$ with $\mu$-synthesis.

\begin{cor}\label{cor_F_theta}
For $\theta \in \R$ and  
$
E_\theta=\left\{\begin{pmatrix}
	z_1 & w \\
	e^{i\theta} w & z_2
\end{pmatrix} : z_1, z_2, w \in \C \right\}
$, we have $\F=\F_{\mu_{E_\theta}}$.
\end{cor}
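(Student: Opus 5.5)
The plan is to apply Theorem \ref{thm_FvsF_mu}, through its equivalence (1)$\iff$(3). To obtain $\F=\F_{\mu_{E_\theta}}$ it is enough to verify condition (3) for $E=E_\theta$. Concretely, for every pair of unit vectors $u,v\in\C^2$ we must exhibit $A\in E_\theta$ with $\|A\|=1$ and $Au=v$. Equivalently, (2) says $\mu_{E_\theta}=\|\cdot\|$, which is exactly the content of Example 3.4 in \cite{rigid_SSV}. So the shortest route is to cite that example and conclude directly from (2)$\implies$(1). For a self-contained argument, I would still check (3) by hand.

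The direct verification rests on a factorization of $E_\theta$. Write $\omega=e^{i\theta/2}$ and $D=\operatorname{diag}(1,\omega)$. Then
\[
D^*\begin{pmatrix} z_1 & w\\ e^{i\theta}w & z_2\end{pmatrix}D
=\begin{pmatrix} z_1 & \omega w\\ \omega w & z_2\end{pmatrix},
\]
so $E_\theta=D\,E_0\,D^*$, where $E_0$ is the space of complex symmetric matrices. Since $D$ is unitary, conjugation by $D$ preserves norms and carries unit vectors to unit vectors. It therefore suffices to treat $\theta=0$.

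For $\theta=0$ I would use the Takagi-type observation that $\ol{v}\,\ol{u}^*+\ol{u}\,\ol{v}^*$ is symmetric. A cleaner choice is the following: given unit vectors $u,v$, look for a symmetric unitary $A$ with $Au=v$. Any matrix of the form $A=W\ol{W}^{\,*}$ with $W$ unitary, that is $A=WW^t$, is a symmetric unitary. Pick a unitary $W$ with $W^t u=e_1$, namely $W^t=$ a unitary sending $u$ to $e_1$. Then $WW^tu=We_1$, the first column of $W$. Writing $W=(W^t)^t$, this first column equals $\ol{u}$ rotated by the free unitary part acting on $e_1^\perp$. I would adjust by choosing instead $A=V\,\operatorname{diag}(\lambda,\mu)\,V^t$ with $V$ unitary, which is symmetric with norm $1$ whenever $|\lambda|=|\mu|=1$. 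We then solve $V\operatorname{diag}(\lambda,\mu)V^tu=v$. Taking $V$ whose columns are an orthonormal basis $\{f_1,f_2\}$, the condition reads $\lambda\langle u,\ol{f_1}\rangle f_1+\mu\langle u,\ol{f_2}\rangle f_2=v$. This amounts to finding a real orthonormal-type basis in which $|u$-coefficients$|=|v$-coefficients$|$.

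The main obstacle is this last linear-algebra step. If it resists a slick argument, I would fall back entirely on Example 3.4 of \cite{rigid_SSV}, which the paper has already stated gives $\mu_{E_\theta}=\|\cdot\|$. With that in hand the corollary is immediate from Theorem \ref{thm_FvsF_mu}.
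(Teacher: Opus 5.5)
Your main route, citing Example 3.4 of \cite{rigid_SSV} to get $\mu_{E_\theta}=\|\cdot\|$ and then applying (2)$\implies$(1) of Theorem \ref{thm_FvsF_mu}, is exactly the paper's argument, so the proposal is correct. The optional hand verification is not needed, and note that $\ol{v}u^t+\ol{u}v^t$ is Hermitian rather than symmetric; still, your reduction to $\theta=0$ via $D=\diag(1,e^{i\theta/2})$ is valid, and the step you left open can be finished by picking a unit vector $f_1$ with $|\langle f_1,\ol{u}\rangle|=|\langle f_1,v\rangle|$ (it exists by continuity on the connected unit sphere) and taking $f_2\perp f_1$.
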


As discussed earlier, the tetrablock $\E$ can be characterized as
\[
\E=\{(a_{11}, a_{22}, \det(A)) : \|A\|<1\}=\{(a_{11}, a_{22}, \det(A)) : \mu_\diag(A)<1\},
\] 
where $\mu_\diag$ is the structured singular value associated with $E_\diag$ in $M_2(\C)$. Since $\E \subseteq \F$, we now restrict ourselves to the linear subspace $E$ of $M_2(\C)$ such that $E_\diag \subseteq E$ and  $\F=\F_{\mu_E}$. In particular, one can take $E=E_\theta$ for any $\theta \in \R$, as noted in the last corollary. The next theorem shows that these are, in fact, the only possibilities for such a proper linear subspace $E$ of $M_2(\C)$.

\begin{thm}
	A proper linear subspace $E$ of $M_2(\C)$ with $E_{\diag} \subseteq E$ satisfies $\F_{\mu_E}=\F$ if and only if 
		\[
		E=\left\{\begin{pmatrix}
			z_1 & w \\
			e^{i\theta} w & z_2
		\end{pmatrix} : z_1, z_2, w \in \C \right\} \ \text{for some $\theta \in \R$.}
		\] 
\end{thm}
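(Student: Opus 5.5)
The plan is to reduce everything to the rigidity criterion in Theorem \ref{thm_FvsF_mu}. The "if" direction is exactly Corollary \ref{cor_F_theta}, so only the "only if" direction needs work. Assume $E$ is a proper subspace with $E_\diag\subseteq E$ and $\F_{\mu_E}=\F$. By Theorem \ref{thm_FvsF_mu}, condition (3) then holds: for all unit vectors $u,v\in\C^2$ there is $A\in E$ with $\|A\|=1$ and $Au=v$.

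\emph{Step 1: dimension count.} Since $E_\diag\subseteq E\subsetneq M_2(\C)$, we have $\dim E\in\{2,3\}$.
\begin{itemize}
\item If $\dim E=2$, then $E=E_\diag$. Condition (3) fails with $u=e_1$, $v=e_2$, because $Ae_1=(z_1,0)^t\neq e_2$ for every diagonal $A$. So $\dim E=3$.
\item With $\dim E=3$, write $E=E_\diag\oplus \text{span}\{M\}$. Subtracting the diagonal part of $M$, we may assume $M=\begin{pmatrix}0&\alpha\\ \beta&0\end{pmatrix}$ with $(\alpha,\beta)\neq(0,0)$.
\end{itemize}

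\emph{Step 2: both off-diagonal entries are nonzero.}
\begin{itemize}
\item If $\beta=0$, then $E$ consists of the upper triangular matrices. Every $A\in E$ has $Ae_1=(a_{11},0)^t\ne e_2$, so (3) fails for $(u,v)=(e_1,e_2)$.
\item If $\alpha=0$, the same argument with $(u,v)=(e_2,e_1)$ rules this case out.
\end{itemize}
Hence $\alpha\beta\neq0$. Normalizing $\alpha=1$ gives
\[
E=\left\{\begin{pmatrix} z_1 & w\\ \gamma w & z_2\end{pmatrix}: z_1,z_2,w\in\C\right\}, \qquad \gamma=\beta/\alpha\neq 0 .
\]

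\emph{Step 3: $|\gamma|=1$.} Apply (3) twice and compare norms.
\begin{itemize}
\item For $(u,v)=(e_1,e_2)$, the condition $Ae_1=e_2$ forces $z_1=0$ and $\gamma w=1$. Thus $A=\begin{pmatrix}0&1/\gamma\\ 1& z_2\end{pmatrix}$, whose norm is at least $|a_{12}|=1/|\gamma|$. The requirement $\|A\|=1$ then gives $|\gamma|\ge1$.
\item For $(u,v)=(e_2,e_1)$, the condition $Ae_2=e_1$ forces $w=1$ and $z_2=0$. Thus $A=\begin{pmatrix} z_1&1\\ \gamma&0\end{pmatrix}$, whose norm is at least $|\gamma|$. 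So $|\gamma|\le 1$.
\end{itemize}
Hence $\gamma=e^{i\theta}$ for some $\theta\in\R$, i.e.\ $E=E_\theta$, which completes the "only if" direction.

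The only point requiring any care is Step 1's reduction of the extra generator to purely off-diagonal form, together with the exclusion of the triangular cases. I expect no genuine obstacle, since the whole argument rests on testing condition (3) of Theorem \ref{thm_FvsF_mu} on the standard basis vectors.
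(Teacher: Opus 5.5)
Your proof is correct and follows essentially the same route as the paper. Like the paper, you reduce via Theorem \ref{thm_FvsF_mu}, force $\dim E=3$ with a purely off-diagonal extra generator, exclude the two triangular cases, and pin down $|\gamma|=1$ by comparing $\|A\|=1$ with individual entries of matrices supplied by condition (3). The only differences are cosmetic: you rule out $E_\diag$ and the triangular subspaces directly through condition (3) rather than by computing $\mu_E(E_{12})=0$ or $\mu_E(E_{21})=0$, and for $|\gamma|\le 1$ you test on $u=e_2$ instead of the paper's $u=(0,x/|x|)^t$, which makes that step slightly cleaner.
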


\begin{proof}
	We denote by $E_\theta=\left\{\begin{pmatrix}
		z_1 & w \\
		e^{i\theta} w & z_2
	\end{pmatrix} : z_1, z_2, w \in \C \right\}$ for $\theta \in \R$. Let $\F=\F_{\mu_E}$ for a proper subspace $E$ of $M_2(\C)$ such that $E$ contains $E_\diag$. It follows from Theorem \ref{thm_FvsF_mu} that $\mu_E(A)=\|A\|$ for all $A \in M_2(\C)$. For $E_{12}=\begin{pmatrix}
	0 & 1 \\
	0 & 0
	\end{pmatrix}$, it is not difficult to see that there does not exist $X \in E_{\diag}$ such that $\det(I-E_{12}X)=0$ and so, $\mu_{E_\text{diag}}(E_{12})=0$. Since $\|E_{12}\|=1$, we have that $E_\diag \subsetneq E$. Then $\dim(E)=3$ and so, we have
	\[
	E=\text{span}\left\{\begin{pmatrix}
		1 & 0\\
		0 & 0 \\
	\end{pmatrix}, \begin{pmatrix}
	0 & 0\\
	0 & 1 \\
	\end{pmatrix}, \begin{pmatrix}
	0 & \alpha\\
	\beta & 0 \\
	\end{pmatrix}\right\}
	\]
	for some $\alpha, \beta \in \C$. If $\beta=0$, then $E$ is the linear subspace of all upper triangular matrices in $M_2(\C)$ and the corresponding structured singular value, denoted by $\mu_\hexa$, coincides with $\|.\|$. This is not possible since there is no upper triangular matrix $X$ in $M_2(\C)$ such that $\det(I-E_{12}X)=0$ implying that $\mu_{\text{hexa}}(E_{12})=0$. Consider the case when $\alpha=0$ and define $E_{21}=\begin{pmatrix}
		0 & 0 \\
		1 & 0
	\end{pmatrix}$. In this case, $E$ is the space of all lower triangular matrices in $M_2(\C)$ and one can easily show that $\mu_E(E_{21})=0$, which is not possible as $\mu_E(E_{21})=\|E_{21}\|$ and $\|E_{21}\|=1$. Consequently, $\alpha, \beta \ne 0$ and so, we can re-write
	\[
	E=\text{span}\left\{\begin{pmatrix}
		1 & 0\\
		0 & 0 \\
	\end{pmatrix}, \begin{pmatrix}
		0 & 0\\
		0 & 1 \\
	\end{pmatrix}, \begin{pmatrix}
		0 & 1\\
		x & 0 \\
	\end{pmatrix}\right\}=\left\{\begin{pmatrix}
	z_1 & w \\
	xw & z_2
	\end{pmatrix}: z_1, z_2 ,w \in \C\right\}
	\]
for some $x \in \C\setminus \{0\}$. By Theorem \ref{thm_FvsF_mu}, there exists $A=\begin{pmatrix}
	z_1 & w \\
	xw & z_2
\end{pmatrix} \in E$ with $\|A\|=1$ such that $Ae_1=e_2$. Since $\|A\|=1$, we have $|w| \leq 1$. Also, the condition $Ae_1=e_2$ gives that $xw=1$ and so, $|x| \geq 1$. Again by Theorem \ref{thm_FvsF_mu}, there exists $B=\begin{pmatrix}
a & b \\
xb & c
\end{pmatrix}$ with $\|B\|=1$ such that 
\[
B\begin{pmatrix}
	0 \\
	x\slash |x|
\end{pmatrix}=e_1 \quad \text{and so,} \quad \begin{pmatrix}
bx\slash |x| \\
c\slash |x|
\end{pmatrix}=\begin{pmatrix}
1 \\ 0
\end{pmatrix}.
\]
Thus, $b=|x|\slash x$ and $c=0$. Note that $|b|=1$ and $\|B\|=1$ gives that $a=0$. Therefore, $B=\begin{pmatrix}
	0 & |x|\slash x \\
	|x| & 0
\end{pmatrix}$. Evidently $\|B\|=\max\{1, |x|\}=1$ and so, $|x|\leq 1$. Thus, $|x|=1$ and we can write $x=e^{i\theta}$ for some $\theta \in \R$. Hence, $E=E_\theta$. The converse follows directly from Corollary \ref{cor_F_theta}.  
\end{proof}

Note that the hypothesis  $E_{\diag} \subseteq E$ in the above theorem cannot be dropped. To see this, let 
	\[
	E=\left\{
	\begin{pmatrix}
		z & w_1\\
		w_2 & -z
	\end{pmatrix} : z, w_1, w_2 \in \C 
	\right\}.
	\]
Evidently, $E$ does not contain $E_\diag$ and $E \ne E_\theta$ for any $\theta \in \R$. We have by Example 3.5 in \cite{rigid_SSV} that $\mu_{E}=\|.\|$. It now follows from Theorem \ref{thm_FvsF_mu} that $\F=\F_{\mu_E}$.

\vspace{0.3cm}

\noindent \textbf{Funding.} The first named author was supported in part by Core Research Grant with Award No. CRG/2023/005223 from Anusandhan National Research Foundation (ANRF) of Govt. of India. The second named author was supported by the same grant of the first named author during the course of the paper.

\end{document}